\theoremstyle{plain}
\newtheorem{theorem}{Theorem}
\newtheorem{lemma}{Lemma}
\newtheorem{corollary}{Corollary}
\theoremstyle{definition}
\newtheorem{definition}{Definition}
\newtheorem{remark}{Remark}
\newtheorem*{remark*}{Remark}
\newtheorem{example}{Example}
\newcommand{\Ichi}{I_{\chi^2}}
\newcommand{\Ichir}{I_{\chi^2}^{(r)}}
\newcommand{\diverge}{\to\infty}
\newcommand{\zeros}{\mathbf 0}
\newcommand{\reals}{{\mathbb{R}}}
\newcommand{\rexp}[1]{e^{#1}}
\newcommand{\identity}{\mathbf I}
\newcommand{\diff}{{\rm d}}
\newcommand{\Expect}{\mathbb{E}}
\newcommand{\Prob}{\mathbb{P}}
\newcommand{\Bern}{{\rm Bern}}
\newcommand{\ie}{i.e.\xspace}
\newcommand{\pth}[1]{\left( #1 \right)}
\newcommand{\qth}[1]{\left[ #1 \right]}
\newcommand{\sth}[1]{\left\{ #1 \right\}}
\newcommand{\iprod}[2]{\left \langle #1, #2 \right\rangle}
\newcommand{\diag}[1]{\mathsf{diag} \left\{ {#1} \right\} }
\newcommand{\dTV}{\mathrm{TV}}
\newcommand{\tG}{{\widetilde{G}}}
\newcommand{\sfp}{{\mathsf{p}}}
\newcommand{\sfq}{{\mathsf{q}}}
\newcommand{\sfN}{{\mathsf{N}}}
\newcommand{\sfP}{{\mathsf{P}}}
\newcommand{\sfQ}{{\mathsf{Q}}}
\newcommand{\calE}{{\mathcal{E}}}
\newcommand{\calF}{{\mathcal{F}}}
\newcommand{\calG}{{\mathcal{G}}}
\newcommand{\calH}{{\mathcal{H}}}
\newcommand{\calI}{{\mathcal{I}}}
\newcommand{\calJ}{{\mathcal{J}}}
\newcommand{\calM}{{\mathcal{M}}}
\newcommand{\calN}{{\mathcal{N}}}
\newcommand{\calX}{{\mathcal{X}}}
\newcommand{\calY}{{\mathcal{Y}}}
\newcommand{\abs}[1]{\left| #1 \right|}
\begin{document}

\title{
The broken sample
problem revisited: Proof of a conjecture by Bai-Hsing and
high-dimensional extensions
}

\date{\today}

 \author{Simiao Jiao \and Yihong Wu \and Jiaming Xu\thanks{S. Jiao and J. Xu are with
 the Fuqua School of Business, Duke University, \texttt{\{simiao.jiao,jx77\}@duke.edu}. Y. Wu is with the Department of Statistics and Data Science, Yale University, \texttt{yihong.wu@yale.edu.}}}

\maketitle
\begin{abstract}
We revisit the classical broken sample problem: Two samples of i.i.d.\ data
points ${\mathbf{X}}=\{X_{1},\ldots , X_{n}\}$ and
${\mathbf{Y}}=\{Y_{1},\ldots ,Y_{m}\}$ are observed without correspondence
with $m\leq n$. Under the null hypothesis, ${\mathbf{X}}$ and
${\mathbf{Y}}$ are independent. Under the alternative hypothesis,
${\mathbf{Y}}$ is correlated with a random subsample of
${\mathbf{X}}$, in the sense that $(X_{\pi (i)},Y_{i})$'s are drawn independently
from some bivariate distribution for some latent injection
$\pi :[m] \to [n]$. Originally introduced by DeGroot, Feder, and Goel to
model matching records in census data, this problem has recently gained
renewed interest due to its applications in data de-anonymization, data
integration, and target tracking. Despite extensive research over the past
decades, determining the precise detection threshold has remained an open
problem even for equal sample sizes ($m=n$). Assuming $m$ and $n$ grow
proportionally, we show that the sharp threshold is given by a spectral
and an $L_{2}$ condition of the likelihood ratio operator, resolving a
conjecture of Bai and Hsing in the positive. These results are extended
to high dimensions and settle the sharp detection thresholds for Gaussian
and Bernoulli models.
\end{abstract}

\tableofcontents

\section{Introduction}
\label{sec:intro}

The broken sample problem~\cite{degroot1971matchmaking,degroot1980estimation,bai2005broken}
refers to the task of identifying the correlated records in two databases
that correspond to the same entity. Depending on the contexts, this problem
is also known as record linkage~\cite{dunn1946record}, data matching, feature
matching, database alignment. With origins in statistics and computer science
dating back to the 1960s~\cite{fellegi1969theory}, it was initially applied
to surveys, censuses, and data management tasks. In recent years, the problem
has gained significant attention~\cite{cullina2018fundamental,dai2019database,dai2020achievability,wang2022random}
due to emerging applications in data de-anonymization~\cite{narayanan2008robust}.
For example, anonymized datasets (e.g., Netflix) are matched to public
datasets (e.g., IMDb) to reveal the identities of anonymized users. Another
modern application arises in e-commerce, where identity fragmentation occurs
as consumers use multiple devices with different identifiers. Companies,
facing fragmented views of user behavior, can use effective linking of
browsing logs across devices to gain a comprehensive understanding of consumer
behavior, which is critical for marketing and advertising success~\cite{lin2022frontiers}.
The broken sample problem also has applications in particle tracking within
natural sciences, where the goal is to track mobile objects (e.g., birds
in flocks, motile cells, or particles in fluid) using consecutive video
frames~\cite{chertkov2010inference,semerjian2020recovery,moharrami2021planted,kunisky2022strong,ding2023planted}.
Furthermore, the problem is closely related to shuffled regression~\cite{pananjady2017linear,abid2017linear,unnikrishnan2018unlabeled,slawski2019linear,slawski2020two,mazumder2023linear,lufkin2024sharp},
where the pairings between covariates and response variables are missing
and must be reconstructed.

In this paper, instead of matching two correlated databases, we focus on
the more basic problem of deciding whether two databases
$\{X_{i}\}_{i=1}^{n}$ and $\{Y_{i}\}_{i=1}^{m}$ are correlated or not,
where $n \ge m$ denote the number of data points in two databases. More
formally, we formulate this question as a binary hypothesis testing problem.

\begin{definition}
[Broken sample detection]
\label{defn1}
Let ${\mathsf{P}}_{X,Y}$ denote a joint distribution over the space
${\mathcal{X}}\times {\mathcal{Y}}$ with its marginals
${\mathsf{P}}_{X}$ and ${\mathsf{P}}_{Y}$. Suppose we observe two datasets
$\{X_{i}\}_{i=1}^{n}$ and $\{Y_{i}\}_{i=1}^{m}$ distributed according to
one of the following hypotheses:

\begin{equation}
\label{eq:problem formulation}
\begin{aligned}
{\mathcal{H}}_{0}: & \quad \{X_{i}\}_{i=1}^{n} \, , \{Y_{i}\}_{i=1}^{m}
\sim {\mathsf{P}}_{X}^{\otimes n} \otimes {\mathsf{P}}_{Y}^{\otimes m};
\\
{\mathcal{H}}_{1}: & \quad \{(X_{\pi (i)},Y_{i})\}_{i=1}^{m} \, , \{X_{j}
\}_{j\in [n] \setminus \pi ([m])} \sim {\mathsf{P}}_{X,Y}^{\otimes m}
\otimes {\mathsf{P}}_{X}^{\otimes (n-m)},
\text{ conditional on $\pi \sim \text{Unif}(S_{m,n})$ }
\end{aligned}
\end{equation}
where $\otimes $ denotes the product measure and $S_{m,n}$ denotes the
set of injections from $[m]$ to $[n]$, $m\leq n$. The goal of
\emph{strong detection} is to distinguish between ${\mathcal{H}}_{0}$ and
${\mathcal{H}}_{1}$ with vanishing Type-I and Type-II error probabilities
as the dataset size $n \to \infty $.
\end{definition}

Note that if the underlying correspondence $\pi $ between $X_{i}$'s and
$Y_{i}$'s were observed, one could pair them accordingly and reduce the
problem to simply testing ${\mathsf{P}}_{X}\otimes {\mathsf{P}}_{Y}$ versus
${\mathsf{P}}_{X,Y}$ with $m$ iid observations. However, since the underlying
correspondence $\pi $ is unobserved, the inherent dependency between
$X_{i}$'s and $Y_{i}$'s are obscured by the missing pairing information,
making the testing problem significantly more challenging. In fact, an
effective test needs to be invariant to the permutation of $X_{i}$'s and
$Y_{i}$'s, and hence must depend only on the empirical distributions of
$X_{i}$'s and $Y_{i}$'s.

Note that the existing literature has been focusing on equal sample sizes
$m=n$, in which case the unknown correspondence $\pi $ is a permutation.
Our formulation above allows for $m<n$, to capture the practical scenarios
in which the datasets to be aligned have different sizes, such as the deanonymization
application in \cite{narayanan2008robust}. Clearly, unequal sample sizes
are more challenging, since a random portion of the $X$ sample (of unknown
location) is independent of the $Y$ sample, which further obscures the
signal.

Throughout the paper, we assume that the joint distribution
${\mathsf{P}}_{X,Y}$ is known. The case of unknown
${\mathsf{P}}_{X,Y}$ is much more challenging. In a companion paper~\cite{GongWuXu25},
we consider the problem of shuffled linear regression, where the $Y$ sample
(responses) is related to linearly the $X$ sample (covariates) through
the latent regression coefficients and the latent permutation of data points.

Here are two examples considered frequently in literature:

\begin{example}[Gaussian model]
\label{exam:Gaussian}
Let ${\mathcal{X}}= {\mathcal{Y}}= {\mathbb{R}}^{d}$ where
${\mathsf{P}}_{X,Y} =({\mathcal{N}}(\mu ,\Sigma ))^{\otimes d}$,
$\mu = (0,0)^ {\top}$, and
$\Sigma = (
\begin{smallmatrix}
1 &\rho
\\
\rho &1
\end{smallmatrix}
)$ with $\rho \in (0,1)$. Equivalently, under ${\mathcal{H}}_{0}$,
$\{X_{i}\}_{i=1}^{n}$ and $\{Y_{i}\}_{i=1}^{m}$ are simply two sets of
i.i.d.\ standard Gaussian random vectors in ${\mathbb{R}}^{d}$; under
${\mathcal{H}}_{1}$, $(X_{\pi (i)},Y_{i})_{i=1}^{m}$ are i.i.d.\ pairs
of standard Gaussian vectors with correlation $\rho $. The Gaussian case
with equal dataset size $m=n$ is considered
\cite{degroot1971matchmaking,bai2005broken} for low dimensions (fixed
$d$) and more recently by~\cite{elimelech2023phase,elimelech2024detection,paslev2023testing,tamir2025testing,zeynep2022detecting}
also for high dimensions ($d$ growing with $n$).
\end{example}

\begin{example}[Bernoulli model]
\label{exam:Bernoulli}
Let ${\mathcal{X}}= {\mathcal{Y}}= \{0,1\}^{d}$ and
${\mathsf{P}}_{X,Y}= {\mathsf{p}}^{\otimes d}$, where ${\mathsf{p}}$ denotes
the joint distribution of two Bernoulli random variables with success probability
$q \in [0,1]$ and correlation coefficient
$\rho \in [-\min \{\frac{q}{1-q},\frac{1-q}{q}\}, 1]$. Equivalently,
$X$ (resp.\ $Y$) can be identified with the adjacency matrix of a random
bipartite graph with $n$ (resp.\ $m$) left vertices and $d$ right vertices,
with edges formed independently with probability $q$. Under
${\mathcal{H}}_{0}$, the two graphs are independent. Under
${\mathcal{H}}_{1}$, the pair of edges $X_{\pi (i),j}$ and $Y_{i,j}$ are
$\rho $-correlated. This model has been studied by~\cite{paslev2023testing}
in the balanced case of $n=m$ and the dense regime for fixed $q$. In this
paper, we also consider unequal sizes $m<n$ and the sparse regime
$q=o(1)$.
\end{example}

The seminal work of Bai and Hsing~\cite{bai2005broken} studied the special
case where $m=n$, ${\mathcal{X}}={\mathcal{Y}}={\mathbb{R}}$, and
${\mathsf{P}}_{X,Y}$ is a fixed distribution independent of $n$. Letting
$L={\mathrm{d}}{\mathsf{P}}_{XY}/{\mathrm{d}}{\mathsf{P}}_{X}\otimes {\mathrm{d}}{
\mathsf{P}}_{Y}$ denote the likelihood ratio function, in~\cite[Theorem 1]{bai2005broken},
it is shown that if the likelihood ratio operator (see~Section~\ref{sec:main results}
for formal definition)
\begin{equation*}
({\mathcal{L}}h)(x) \triangleq \mathbb{E}_{Y\sim {\mathsf{P}}_{Y}}
\left [ L(x,Y)h(Y) \right ]
\end{equation*}
is square integrable (i.e.,
$\chi ^{2}({\mathsf{P}}_{X,Y}\|{\mathsf{P}}_{X}\otimes {\mathsf{P}}_{Y})
<\infty $) and its second-largest singular value $\lambda _{1} < 1$, strong
detection is impossible. The authors further conjectured that the converse
is also true, that is, strong detection is possible if
$\chi ^{2}({\mathsf{P}}_{X,Y}\|{\mathsf{P}}_{X}\otimes {\mathsf{P}}_{Y})
=\infty $ or $\lambda _{1} = 1$.

More recently, significant research has been devoted to the Gaussian model~\cite{zeynep2022detecting,elimelech2023phase,paslev2023testing,elimelech2024detection}
under the special case of equal sample sizes $m=n$. The state-of-the-art
results are summarized in~Table~\ref{tab:summarize_sota_gaussian}. Despite
these advancements, notable gaps still remain between the existing necessary
and sufficient conditions for strong detection. In particular, even the
correct scaling for the strong detection threshold remains unclear in the
simple setting with fixed dimension $d$.

\begin{table}[!ht]
\centering
\renewcommand{\arraystretch}{1.5}
\setlength{\tabcolsep}{10pt}
\begin{tabular}{|c|c|c|c|}
\hline
{} & {} & \textbf{State of the art $(m=n)$} & \textbf{This Work $(m=\Theta(n))$} \\ \cline{3-4} 
\hline
\multirow{2}{*}{\textbf{Fixed $d$}} 
& \textbf{Possible} & $\rho^2 = 1-o\left(n^{-(\frac{2}{d-1} \wedge 4)}\right)$ & $\rho^2 = 1-o(1)$ \\ \cline{2-4}
& \textbf{Impossible} & $\rho ^{2} = \rho ^{*}(d) - \Omega (1)$ & $\rho^2 = 1-\Omega(1)$ \\ \hline
\multirow{2}{*}{\textbf{Growing $d$}} 
& \textbf{Possible} & $\rho^2 = \omega\left(d^{-1}\right)$ & $\rho^2 = \omega\left(d^{-1}\right)$ \\ \cline{2-4}
& \textbf{Impossible} & $\rho^2 < (1-\epsilon)d^{-1}$ & $\rho^2 = O\left(d^{-1}\right)$ \\ \hline
\end{tabular}
\caption{Comparison of possibility and impossibility conditions for broken Gaussian samples. For $d\ge 4$, $\rho^*(d)$ is the unique root of $(1-x)^d=x$; for $d<4$, $\rho^*(d)=1/d$.}
\label{tab:summarize_sota_gaussian}
\end{table}

In this paper, we study a more general setting where ${\mathcal{X}}$ and
${\mathcal{Y}}$ can be any two possibly different spaces, and
${\mathsf{P}}_{X,Y}$ is an arbitrary joint distribution with potentially
non-identical marginals ${\mathsf{P}}_{X}$ and ${\mathsf{P}}_{Y}$. We further
allow the sample sizes to be unequal, focusing on the proportional regime
where $m=\alpha n$ for some constant $\alpha \in (0,1]$.

When ${\mathsf{P}}_{X,Y}$ does not depend on $n$, we show that
$\chi ^{2}({\mathsf{P}}_{X,Y}\|{\mathsf{P}}_{X}\otimes {\mathsf{P}}_{Y})
=\infty $, or $\lambda _{1} = 1$ and $\alpha =1$, are both sufficient (Theorem~\ref{thm:fixed P Q})
and necessary (Corollary~\ref{cor:lower bound general}) for strong detection,
resolving Bai-Hsing's conjecture in the positive and extending it to unequal
sample sizes. Specifically, for sufficiency, we construct a test statistic
with runtime linear in $n$ based on histogram approximations of the empirical
distributions of $X_{i}$'s and $Y_{i}$'s. For the case where
${\mathsf{P}}_{X,Y}$ depends on $n$,
\begin{itemize}
\item We show that if
$\chi ^{2}({\mathsf{P}}_{X,Y}\|{\mathsf{P}}_{X}\otimes {\mathsf{P}}_{Y})
= O(1)$ for $\alpha <1$, and in addition
$\lambda _{1} = 1 - \Omega (1)$ for $\alpha =1$, then strong detection
is impossible (see~Corollary~\ref{cor:lower bound general}).
\item We provide two computationally efficient tests based on the spectrum
of the likelihood ratio operator. Let
$1=\lambda _{0} \ge \lambda _{1} \ge \lambda _{2} \geq \cdots $ be the
singular values of the likelihood ratio operator ${\mathcal{L}}$. The first
test, using only the top eigenfunctions corresponding to
$\lambda _{1}$, achieves strong detection under the conditions
$\lambda _{1} \to 1$, $\alpha =1$, and certain moment assumptions (see~Theorem~\ref{thm:upper bound general eigen 1}).
The second test, leveraging the top-$r$ eigenfunctions and singular values
(where $r$ is appropriately chosen), succeeds in strong detection provided
that $\sum _{k=1}^{r} \lambda ^{2}_{k} \to \infty $ and certain additional
moment conditions (see~Theorem~\ref{thm:upper bound general eigen r}).
\end{itemize}

Particularizing these general results to the Gaussian model yields sharp
detection thresholds for both fixed and growing dimensions, as shown in~Table~\ref{tab:summarize_sota_gaussian}.
Additionally, we also settle the sharp detection threshold for the Bernoulli
case, closing the gaps in~\cite{paslev2023testing}.

Complementing our analysis of thresholds for strong detection, we also
examine the testing power of various computationally efficient tests in
the fixed distribution case as the sample size $n \to \infty $. Specifically,
we design a test leveraging the top-$r$ eigenfunctions and singular values
of the likelihood ratio operator with a runtime of $O(nr)$. We demonstrate
that its asymptotic power converges to that of the optimal likelihood ratio
test as $r\to \infty $. Numerical experiments with Gaussian samples further
reveal that even for moderately large $r$, the asymptotic power of the
proposed test closely approximates that of the optimal likelihood ratio
test.

\subsection{Notation}
\label{sec1.1}

We use ${\mathbb{N}}$ and ${\mathbb{R}}$ to denote natural number
$\{1,2,3,\cdots \}$ and real number, respectively. Let
${\mathbb{R}}_{+}$ denote positive real numbers. For
$n\in {\mathbb{N}}$, let $[n]\triangleq \{1,2,\cdots ,n\}$. Let
$S_{n}$ denote the set of all the permutations over $[n]$. For a given
permutation $\pi \in S_{n}$, $\pi (i)$ denotes the value to which
$\pi $ maps $i\in [n]$. Let $N_{k},k\in [n]$ denote the number of
$k$-cycles in the cycle decomposition of $\pi $. Let $\|v\|$ denote the
2-norm of vector $v$. Let $\mathbf I_{d} $ denote the identity matrix with
dimension $d$. The pseudo-inverse of a matrix $A$ is denoted by
$A^{\dag}$. The notation
$X_{1},\cdots ,X_{n} \stackrel{i.i.d.}{\sim} {\mathsf{P}}_{X}$
denotes that the random vectors $(X_{1},\cdots ,X_{n})$ are independent
and identically distributed (i.i.d.) according to ${\mathsf{P}}_{X}$. Let
${\mathcal{N}}(\mu ,\Sigma )$ denote the multivariate normal distribution
with mean vector $\mu $ and covariance matrix $\Sigma $. Let
$\chi ^{2}(d)$ denote the chi-square distribution with degrees of freedom
$d$.

For probability measures $P$ and $Q$, the total variation distance is
$\mathrm{TV}(P,Q)=\frac{1}{2}\int |dP-dQ|$, the squared Hellinger distance
is $H^{2}(P,Q) = \int (\sqrt{dP}-\sqrt{dQ})^{2}$, and the
$\chi ^{2}$-divergence is
$\chi ^{2}(P\|Q) = \int (\frac{dP}{dQ}-1)^{2} dQ$ if $P\ll Q$ and
$\infty $ otherwise.

We use standard big $O$ notations, e.g., for functions
$f,g:{\mathbb{N}}\to {\mathbb{R}}_{+}$, we say that $f = O(g)$ if there
exists $C>0$ such that $f(n) \leq Cg(n)$ for all $n$ large enough. We say
that $f = o(g)$ if $\lim _{n\to \infty} f(n)/g(n) = 0$. Let
$f = \Omega (g)$ if $g = O(f)$ and $f = \omega (g)$ if $g=o(f)$.

\subsection{Organization}
\label{sec1.2}

The rest of this paper is organized as follows.
Section~\ref{sec: related work} reviews related work in the literature.
Section~\ref{sec:main results} presents our main results, including possibility
and impossibility conditions. Section~\ref{sec: proof of lower bounds} contains
the proof of the impossibility result (Theorem~\ref{thm:expression of second moment} and~\prettyref{cor:lower bound general}).
Section~\ref{sec:pf-upper} provides the proofs of the sufficiency results
(Theorem~\ref{thm:fixed P Q},~\ref{thm:upper bound general eigen 1}, and~\ref{thm:upper bound general eigen r}).
Finally,~Section~\ref{sec: discussion} concludes the paper with a discussion
of open problems. Auxiliary lemmas and the proof of~Corollary~\ref{cor:Gaussian} and~\ref{cor:Bernoulli}
are presented in the~\hyperref[app:appendix]{Appendix}.

\section{Related work}
\label{sec: related work}

In this section, we discuss the related work and relevant results in detail.
These results all assume equal sample sizes $m=n$.

\subsection{Gaussian model}
\label{sec2.1}

The state-of-the-art findings for the Gaussian case are summarized in~Table~\ref{tab:summarize_sota_gaussian}.
In high dimensions where $d\to \infty $ as $n\to \infty $, a simple test
statistic based on $\sum _{i,j} X_{i}^{\top }Y_{j}$ achieves strong detection
if $\rho ^{2} = \omega (d^{-1})$, as shown in~\cite[Theorem 1]{zeynep2022detecting}.
Conversely, the best known impossibility result, established in~\cite[Theorem 4]{elimelech2023phase},
asserts that strong detection is unattainable if
$\rho ^{2} < (1-\epsilon )d^{-1}$ for some fixed $\epsilon > 0$.

An alternative test based on counting the number of pairs $(i,j)$ whose
pairwise correlation $X_{i}^{\top }Y_{j}$ exceeds a certain threshold is
proposed in~\cite{tamir2025testing}. In low dimensions with fixed
$d$, it has been shown that this count test achieves strong detection when
$\rho ^{2} = 1-o(n^{-\frac{2}{d-1}})$ for $d \ge 2$~\cite[Theorem 2]{elimelech2023phase}.
For $d=1$, a different count test was later shown in~\cite[Theorem 5]{paslev2023testing}
to achieve strong detection when $\rho ^{2} = 1-o(n^{-4})$. Conversely,
the best-known impossibility results assert that strong detection is impossible,
provided that $\rho ^{2} = 1-\Omega (1)$ for $d=1$~\cite[Theorem 1]{elimelech2024detection}
and $\rho ^{2} =\rho ^{\ast }(d) -\Omega (1)$ for $d\ge 4$, where
$\rho ^{\ast}(d)$ is the unique root of $(1-x)^{d}=x$~\cite[Theorem 4]{elimelech2023phase}
and $\rho ^{2} =1/d - \Omega (1)$ for $d = 2$ or $3$~\cite[Theorem 5]{elimelech2024detection}.

Despite these advances, the sharp threshold for strong detection remains
open for both low and high dimensions, which we resolve in this paper as
a corollary of our general results.

We mention in passing that the setting of fixed $n$ and
$d\to \infty $ has also been considered in the literature. It has been
shown that strong detection in this regime is possible if and only if
$\rho ^{2}d = \omega (1)$~\cite{zeynep2022detecting,elimelech2023phase}.

\subsection{Bernoulli model}
\label{sec2.2}

For the Bernoulli model, it is shown that for growing dimension $d$, strong
detection is possible if $\rho ^{2} = \omega (d^{-1})$~\cite[Example 2]{paslev2023testing}
and is impossible if
$\frac{\rho ^{2}}{1-\rho ^{2}} < (1-\epsilon ) d^{-1}$ for some fixed
$\epsilon > 0$~\cite[Theorem 2]{paslev2023testing}. For fixed dimension
$d$, strong detection is possible if $\rho ^{2} = 1-o(n^{-4/d})$~\cite[Example 4]{paslev2023testing}
and is impossible if $d < \log (1/\rho ^{2})/\log (1+\rho ^{2})$~\cite[Example 2]{paslev2023testing}.
This work sharpens these results by determining the precise thresholds
for both fixed and vanishing $q$.

\subsection{General distribution}
\label{sec2.3}

More recently, \cite{paslev2023testing} considers a generalization of the
Gaussian model in which ${\mathcal{X}}= {\mathcal{Y}}$ is a product space
of dimension $d$, and ${\mathsf{P}}_{X,Y}$ is a product distribution across
the $d$-dimensions with identical marginals. Notably,
\cite[Theorem 2]{paslev2023testing} establishes that strong detection is
impossible if
$\lambda _{1}^{2} \sum _{k=0}^{\infty }\lambda _{k}^{2} <1$ for fixed
$d$ and $\sum _{k=0}^{\infty }\lambda _{k}^{2} <e$ for growing $d$. These
results are much looser than those obtained in the present paper (See~Remark~\ref{rmk:comparison}
for details). For positive results,~\cite{paslev2023testing} considers
the generalized likelihood ratio test, which thresholds the maximum log-likelihood
over all permutations; the sum test, which thresholds the sum of centered
log-likelihood ratios; and the count test, which aggregates the number
of sample pairs whose likelihood ratios exceed a specific threshold. The
detection thresholds for these tests are characterized in terms of KL divergences
and Chernoff exponents, which yield looser bounds compared to the sharp
results derived in this work based on the $\chi ^{2}$-information.

\subsection{Weak detection}
\label{sec2.4}

In addition to the aforementioned strong detection, another well-studied
objective is \emph{weak detection}, which amounts to distinguishing between
${\mathcal{H}}_{1}$ and ${\mathcal{H}}_{0}$ with the sum of Type-I and
Type-II error probabilities asymptotically bounded away from $1$. In other
words, weak detection requires the testing algorithm to perform asymptotically
strictly better than random guessing.

Sharp thresholds for weak detection have been established in the Gaussian
case. Specifically, weak detection is shown~\cite[Theorem 3]{elimelech2023phase}
to be impossible when $\rho ^{2} = o(d^{-1})$. In the positive direction,
for growing dimension $d$, testing based on
$\sum _{i,j} X_{i}^{\top }Y_{j}$ was shown in~\cite[Theorem 1]{zeynep2022detecting}
to achieve weak detection, provided that
$\rho ^{2} = \Omega (d^{-1})$. For fixed dimension $d$, testing based on
$\|\bar{X}-\bar{Y}\|$ was shown in~\cite[Theorem 9]{elimelech2024detection}
to achieve weak detection when $\rho = \Omega (1)$.

Weak detection in general cases beyond the Gaussian setting is also studied
in~\cite{paslev2023testing}. It is shown in~\cite[Theorem 1]{paslev2023testing}
that when $\sum _{k=1}^{\infty }\lambda _{k}^{2} = o(1)$ weak detection
is impossible. This result coincides with~\cite[Theorem 3]{elimelech2023phase}
when specialized to the Gaussian case. For positive direction, a test based
on the summation of centered likelihood ratio proposed in~\cite[Theorem 4]{paslev2023testing}
achieves weak detection if a condition involving symmetric KL-divergence
between ${\mathsf{P}}_{X,Y}$ and
${\mathsf{P}}_{X}\otimes {\mathsf{P}}_{Y}$ holds. However, it remains open
whether weak detection is achievable when
$\sum _{k=1}^{\infty }\lambda _{k}^{2} =\Omega (1)$.

In the case where ${\mathsf{P}}_{X,Y}$ is a fixed distribution, we can
show that the sharp threshold for weak detection is
$\lambda _{1} > 0$. The negative part is trivial since likelihood ratio
operator $L$ is $1$ given $\lambda _{1} = 0$. Hence
${\mathsf{P}}_{X,Y} = {\mathsf{P}}_{X} \otimes{\mathsf{P}}_{Y}$. For the
positive part, we can consider the spectral statistic $T_{\mathrm{top}}$ in~(\ref{eq:statistic when lambda_1 to 1}).
Under CLT, $T_{\mathrm{top}}$ converges to $2\chi ^{2}(1)$ under
${\mathcal{H}}_{0}$ and
$\mathbb{E}_{1}[T_{\mathrm{top}}]=2(1-\lambda _{1})$, thereby achieving weak
detection when $\lambda _{1}>0$.

\subsection{Recovery thresholds}
\label{sec2.5}

Closely related to the hypothesis testing problem is the recovery problem
under the planted model ${\mathcal{H}}_{1}$, where the goal is to estimate
the latent permutation $\pi $ based on the $X$ and $Y$ samples. The Gaussian
model has received the most attention, for which the maximum likelihood
estimator (MLE) reduces to the following linear assignment problem:

\begin{equation}
\hat{\pi}_{\mathrm{ML}} \in \mathop{\arg \min }_{\pi \in S_{n}}
\frac{1}{n} \sum _{i=1}^{n} \| X_{\pi (i)} - Y_{i} \|^{2}.
\label{eq:MLE}
\end{equation}
The optimal objective value is the squared 2-Wasserstein distance between
the empirical distributions of $X_{i}$'s and $Y_{i}$'s. This linear assignment
model has been studied in the context of the broken sample problem since
1970s~\cite{degroot1971matchmaking,goel1975re,degroot1976matching,degroot1980estimation,bai2005broken}
and recently reintroduced as a model for database matching~\cite{dai2019database,dai2020achievability}
and as a geometric model for planted matching~\cite{kunisky2022strong}.
It is shown in~\cite{dai2019database} that the MLE coincides with the true
latent permutation $\pi $ with high probability, provided that
$\frac{d}{4}\log \frac{1}{1-\rho ^{2}} - \log n \to \infty $. Conversely,
it is shown in~\cite{wang2022random} that
$\frac{d}{4}\log \frac{1}{1-\rho ^{2}} - \log n +\log d \to \infty $ is
necessary for perfect recovery. Moreover, it is shown in~\cite{dai2020achievability}
that the maximum likelihood estimator achieves the almost perfect recovery
of $\pi $ with a vanishing fraction of errors, provided that
$\frac{d}{2}\log \frac{1}{1-\rho ^{2}} \ge (1+\epsilon ) \log n$ in the
high-dimensional regime of $d=\omega (\log n)$ and
$1-\rho ^{2} =o(n^{-2/d})$ in the low-dimensional regime of
$d=o(\log n)$~\cite{kunisky2022strong}, with matching lower bounds shown
in \cite{wang2022random}. To summarize and compare with the detection thresholds
in Table~\ref{tab:summarize_sota_gaussian}, we see that for
$d=\omega (\log n)$, the sharp thresholds for exact and almost exact recovery
are $\rho ^{2}d \geq 4 \log n$ and $\rho ^{2} d \geq 2 \log n$ respectively,
which differ from the detection threshold of $\rho ^{2}d=\omega (1)$ by
a logarithmic factor. For $d=\Theta (1)$, however, the recovery thresholds
are $1- \rho ^{2} =o( n^{-4/d}) $ and $1-\rho ^{2} =o( n^{-2/d})$ respectively,
which is much higher than $1-\rho ^{2}=o(1)$ for detection.

\section{Main results}
\label{sec:main results}

The sharp threshold for strong detection involves two information measures
which we now introduce. For a pair of random variables $(X,Y)$ with joint
distribution ${\mathsf{P}}_{X,Y}$ and marginals
${\mathsf{P}}_{X}, {\mathsf{P}}_{Y}$, their
\emph{$\chi ^{2}$-information} is defined as (see e.g.~\cite[Sec.~7.8]{polyanskiy2025information})
\begin{equation*}
I_{\chi ^{2}}(X; Y) \triangleq \chi ^{2}({\mathsf{P}}_{X,Y}\| {
\mathsf{P}}_{X}\otimes {\mathsf{P}}_{Y}).
\end{equation*}
This is the analog of the standard mutual information when the KL divergence
is replaced by the $\chi ^{2}$-divergence. The second quantity is the Hirschfeld-Gebelein-R\'enyi
\emph{maximal correlation} (see e.g.~\cite[Sec.~33.2]{polyanskiy2025information}),
defined as
\begin{equation*}
\rho (X; Y) \triangleq \sup _{g,h} \left \{\mathbb{E}_{{\mathsf{P}}_{X,Y}}
[g(X)h(Y)]: \mathbb{E}\left [ g(X) \right ] = \mathbb{E}\left [ h(Y)
\right ]= 0,\; \mathbb{E}\left [ g^{2}(X) \right ] = \mathbb{E}\left [
h^{2}(Y) \right ]=1 \right \}.
\end{equation*}

These two information measures have spectral interpretations in terms of
the conditional mean operator. Assume that the following likelihood ratio
for a single pair of observations exists:
\begin{equation}
\label{eq:likelihood ratio}
L(x,y) =
\frac{{\mathrm{d}}{\mathsf{P}}_{XY}}{{\mathrm{d}}{\mathsf{P}}_{X}\otimes {\mathsf{P}}_{Y}}(x,y),
\quad x\in {\mathcal{X}}, y\in {\mathcal{Y}}.
\end{equation}
This kernel defines a linear operator from $L_{2}({\mathsf{P}}_{Y})$ to
$L_{2}({\mathsf{P}}_{X})$:
\begin{equation}
\label{eq:H-S operator}
({\mathcal{L}}h)(x) =\mathbb{E}_{Y\sim {\mathsf{P}}_{Y}}\left [ L(x,Y)h(Y)
\right ] = \int L(x,y)h(y) {\mathrm{d}}{\mathsf{P}}_{Y}(y), \quad h \in L_{2}({
\mathsf{P}}_{Y}).
\end{equation}
In other words, $({\mathcal{L}}h)(x) = \mathbb{E}[h(Y)|X=x]$ under the
joint law ${\mathsf{P}}_{XY}$.

When $L \in L_{2}({\mathsf{P}}_{X} \otimes {\mathsf{P}}_{Y})$,
${\mathcal{L}}$ is a Hilbert-Schmidt operator and admits a spectral decomposition
(see~\cite[Equation (2)]{bai2005broken}):
\begin{equation}
\label{eq:SVD of L}
L(x,y) = \sum _{k=0}^{\infty }\lambda _{k} \phi _{k}(x)\psi _{k}(y),
\end{equation}
where $\{\phi _{k}\}_{k\geq 0}$ and $\{\psi _{k}\}_{k\geq 0}$ are orthonormal
bases of $L_{2}({\mathsf{P}}_{X})$ and $L_{2}({\mathsf{P}}_{Y})$ respectively,
and $\lambda _{0} \ge \lambda _{1} \ge \cdots $ are the singular values.
It is easy to check that $\|{\mathcal{L}}\| \le 1$, $\lambda _{0}=1$, and
$\phi _{0}=\psi _{0}\equiv 1$. As a result of this decomposition, we have
\begin{equation*}
I_{\chi ^{2}}(X;Y) =\|L\|^{2}_{{\mathsf{P}}_{X} \otimes {\mathsf{P}}_{Y}}
-1 = \sum _{k=1}^{\infty }\lambda _{k}^{2}, \quad \text{ and } \quad
\rho (X;Y)=\lambda _{1}.
\end{equation*}

Denoting $\mathbb{P}_{0}$ and $\mathbb{P}_{1}$ as the joint distributions
of
${\mathbf{X}}=(X_{1},\ldots ,X_{n}),{\mathbf{Y}}=(Y_{1},\ldots ,Y_{m})$
under ${\mathcal{H}}_{0}$ and ${\mathcal{H}}_{1}$, respectively. The Neyman-Pearson
lemma shows that the minimum sum of Type-I and Type-II errors is
$1-\mathrm{TV}(\mathbb{P}_{0},\mathbb{P}_{1})$, attained by the likelihood
ratio test
${\mathbf{1}_{\left \{{{\mathbf{L}}({\mathbf{X}},{\mathbf{Y}}) \geq 1}
\right \}}}$, where
\begin{equation}
{\mathbf{L}}({\mathbf{X}},{\mathbf{Y}}) \triangleq
\frac{{\mathrm{d}}\mathbb{P}_{1}}{{\mathrm{d}}\mathbb{P}_{0}}({\mathbf{X}},{
\mathbf{Y}}).
\label{eq:LRT}
\end{equation}
It turns out that the optimal likelihood ratio test
(\ref{eq:LRT}) is related to the kernel $L$ as follows. Denote by
$ \mathbb{P}_{1|\pi}$ the joint law of ${\mathbf{X}},{\mathbf{Y}}$ under
${\mathcal{H}}_{1}$ conditioned on the latent injection
$\pi \in S_{m,n}$, so that
$\mathbb{P}_{1} = \mathbb{E}_{\pi }\mathbb{P}_{1|\pi}$. Then
\begin{equation}
\label{eq:whole data set LR}
{\mathbf{L}}({\mathbf{X}},{\mathbf{Y}}) =
\frac{{\mathrm{d}}\mathbb{P}_{1}}{{\mathrm{d}}\mathbb{P}_{0}}({\mathbf{X}},{
\mathbf{Y}}) = \frac{1}{\left | S_{m,n} \right |}\sum _{\pi \in S_{m,n}}
\frac{{\mathrm{d}}\mathbb{P}_{1|\pi}}{{\mathrm{d}}\mathbb{P}_{0}}({\mathbf{X}},{
\mathbf{Y}}) = \frac{1}{\left | S_{m,n} \right |}\sum _{\pi \in S_{m,n}}
\prod _{i=1}^{m} L(X_{\pi (i)},Y_{i}).
\end{equation}
Directly computing this likelihood ratio requires to calculate the
\textit{permanent} of all $m\times m$ submatrices of the matrix whose
$(i,j)$-th entry is $L(X_{i}, Y_{j})$. The exact computation of permanent
for general matrices is NP-complete. In our case, since the matrix has
non-negative entries, there exists a polynomial-time (polynomial in
$m$ and $1/\epsilon $) Markov Chain Monte Carlo algorithm that, with high
probability, computes an approximation within a multiplicative factor of
$1\pm \epsilon $ from the true permanent~\cite{jerrum2004polynomial} for
each submatrix. However, the runtime scales as $\tilde{O}(m^{10})$, making
the algorithm impractical for large-scale datasets. Moreover, the likelihood
ratio test is difficult to analyze.

To overcome these statistical and computational challenges, we instead
bound the second moment of the likelihood ratio,
$\mathbb{E}_{0}\left [ {\mathbf{L}}^{2}({\mathbf{X}},{\mathbf{Y}})
\right ]$ to establish our impossibility results. For the positive direction,
we develop alternative, computationally efficient tests and show that they
attain the optimal detection thresholds.

\subsection{Impossibility results}
\label{sec:_impossibility_results_VTEX1}

In the following, we first provide an expression for
$\mathbb{E}_{0} {\mathbf{L}}^{2}$ and then obtain non-asymptotic impossible
result for detection that applies to general distribution
${\mathsf{P}}_{X,Y}$, which may depend on the sample sizes $m$ and
$n$. Here we use the convention that
$\binom{i}{i+1} = 0,\forall i \geq 0$ and $\binom{-1}{0} = 1$.

\begin{theorem}
\label{thm:expression of second moment}
For likelihood ratio ${\mathbf{L}}({\mathbf{X}},{\mathbf{Y}})$ corresponding
to problem~(\ref{eq:problem formulation}),
\begin{equation*}
\mathbb{E}_{0} {\mathbf{L}}^{2} = \sum _{\ell =0}^{m} t_{\ell }a_{
\ell},
\end{equation*}
where $t_{\ell }= \binom{n-\ell -1}{m-\ell}/\binom{n}{m}$ satisfying
$\sum _{\ell =0}^{m} t_{\ell}=1$ and $a_{\ell}$ is the $\ell $th coefficient
in the power series of
$\prod _{k=0}^{\infty }1/(1-z \lambda ^{2}_{k})$.
\end{theorem}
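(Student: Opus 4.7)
The plan is to expand $\Expect_0 \bfL^2$ using the explicit form~\prettyref{eq:whole data set LR}, exploit the i.i.d.\ symmetry of $\bfX$ under $\Prob_0$ to replace one of the two injections by the identity $\mathrm{id}:[m]\hookrightarrow[n]$, and then evaluate the resulting single sum over $\sigma\in S_{m,n}$ via the spectral decomposition of $L$. After substituting $L(x,y)=\sum_k\lambda_k\phi_k(x)\psi_k(y)$ into each factor $L(X_i,Y_i)L(X_{\sigma(i)},Y_i)$ and integrating out $\bfY$ using $\Expect\qth{\psi_k(Y)\psi_{k'}(Y)}=\delta_{kk'}$, the two free indices at each $i$ collapse to a single $k_i$ with weight $\lambda_{k_i}^2$, leaving
\[
\Expect_0\bfL^2
\;=\; \frac{1}{|S_{m,n}|}\sum_{\sigma\in S_{m,n}}\sum_{(k_i)\in\naturals^m}\prod_{i=1}^m\lambda_{k_i}^2\cdot \Expect_\bfX\qth{\prod_{i=1}^m\phi_{k_i}(X_i)\phi_{k_i}(X_{\sigma(i)})}.
\]

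The $\bfX$-expectation factorizes over $j\in[n]$ by independence of $X_j$, and I would analyze it via the bipartite multigraph $G_\sigma$ on $[m]\sqcup[n]$ with ``blue'' edges $(i,i)$ and ``red'' edges $(i,\sigma(i))$ for $i\in[m]$. Each left vertex has degree $2$ and each right vertex has degree $0,1,$ or $2$, so the components are isolated right-vertices, paths with both endpoints degree-$1$ right-vertices, and cycles alternating blue and red. Orthonormality of $\{\phi_k\}$ in $L_2(\sfP_X)$ imposes: a degree-$1$ $j$ incident to $i$ forces $k_i=0$; a loop ($\sigma(i)=i$) contributes $1$; and a degree-$2$ $j$ joining distinct $i_1,i_2$ forces $k_{i_1}=k_{i_2}$. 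Propagating these constraints shows that all $k_i$'s on a path equal $0$ (contributing $1$ to $\prod_i\lambda_{k_i}^2$) and all $k_i$'s in a length-$\ell$ cycle share a common value $k$, summed to give $p_\ell \triangleq \sum_k \lambda_k^{2\ell}$. Under $\pi=\mathrm{id}$, the cycles of $G_\sigma$ are exactly the cycles of $\sigma$ restricted to its \emph{cyclic part} $C(\sigma) = \{i \in [m]: \sigma^r(i) \in [m]\text{ for all } r\ge 1\}$, so
\[
\Expect_0\bfL^2 \;=\; \frac{1}{|S_{m,n}|}\sum_{\sigma\in S_{m,n}}\prod_{\ell\ge1}p_\ell^{c_\ell(\sigma)},
\]
where $c_\ell(\sigma)$ counts $\ell$-cycles of $\sigma|_{C(\sigma)}$.

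To finish I would sort the $\sigma$'s by cycle type $(c_\ell)$ with $M=\sum_\ell \ell c_\ell=|C(\sigma)|$: there are $\binom{m}{M}$ choices of $C(\sigma)$, $M!/\prod_\ell(c_\ell!\,\ell^{c_\ell})$ permutations of $C(\sigma)$ of that type, and $I(m-M,n-M)$ extensions of $\sigma$ to an injection from $[m]\setminus C(\sigma)$ into $[n]\setminus C(\sigma)$ whose induced partial map on $[m]\setminus C(\sigma)$ is acyclic. The central combinatorial lemma is
\[
I(k,n') \;=\; (n'-1)(n'-2)\cdots(n'-k) \;=\; \frac{(n'-1)!}{(n'-k-1)!},
\]
provable by a short induction on $k$ (condition on $f(1)$, or peel off the cyclic part). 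Plugging in $I(m-M,n-M)=(n-M-1)!/(n-m-1)!$, combining with $\binom{m}{M}\cdot M!/|S_{m,n}|=m!(n-m)!/(n!(m-M)!)$, and recognizing from $\prod_k (1-z\lambda_k^2)^{-1}=\exp\pth{\sum_{j\ge1}z^jp_j/j}$ the identity
\[
a_M \;=\; \sum_{(c_\ell):\,\sum_\ell \ell c_\ell=M}\,\prod_\ell\frac{p_\ell^{c_\ell}}{c_\ell!\,\ell^{c_\ell}},
\]
the sum collapses into $\Expect_0\bfL^2=\sum_{M=0}^m t_M a_M$ with $t_M=\binom{n-M-1}{m-M}/\binom{n}{m}$. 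The normalization $\sum_M t_M=1$ then follows by applying the identity to the trivial kernel $L\equiv1$ (so $\bfL\equiv 1$ and $a_M=1$).

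The main obstacle is the combinatorial lemma on acyclic injections, $I(k,n')=(n'-1)!/(n'-k-1)!$; everything else is careful bookkeeping around the components of $G_\sigma$ and the standard cycle-index expansion of $\prod_k(1-z\lambda_k^2)^{-1}$.
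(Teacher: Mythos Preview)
Your proposal is correct and follows essentially the same approach as the paper: reduce to a single injection by symmetry, use the spectral decomposition of $L$ and orthonormality to reduce the expectation to a product of $p_\ell=\sum_k\lambda_k^{2\ell}$ over the cycles of $\sigma$ on its cyclic part (the paper's ``2-core''), count the acyclic extensions by the same falling-factorial formula, and collapse via the cycle-index/exponential-formula identity for $a_M$. The only cosmetic differences are that the paper invokes P\'olya's theorem where you invoke the exponential formula directly, and the paper proves $\sum_\ell t_\ell=1$ by a direct combinatorial identity rather than your (neat) specialization to $L\equiv 1$.
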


\begin{corollary}
\label{cor:lower bound general}
We have
\begin{equation*}
\mathbb{E}_{0} {\mathbf{L}}^{2} \leq \prod _{k=0}^{\infty }
\frac{1}{1-\frac{m}{n}\lambda ^{2}_{k}}.
\end{equation*}
If further $m = (1-o(1))n$, $\lambda _{1} \leq 1-c$ and
$\sum _{k=1}^{\infty }\lambda ^{2}_{k}\leq C$ for some constants
$c\in (0,1), C > 0$, then
\begin{equation*}
\left | \mathbb{E}_{0} {\mathbf{L}}^{2} - \prod _{k=1}^{\infty }
\frac{1}{1-\lambda ^{2}_{k}} \right | \to 0.
\end{equation*}

It follows that $\mathbb{E}_{0} {\mathbf{L}}^{2} = O(1)$ and hence strong
detection is impossible, if the following conditions hold,
\begin{itemize}
\item $m /n < 1-c_{1}$: $\frac{m}{n}I_{\chi ^{2}}(X;Y)\leq C_{1}$;
\item $m= (1-o(1))n$: $\rho (X;Y) \leq 1-c$ and $I_{\chi ^{2}}(X;Y)
\leq C$,
\end{itemize}
for some constants $c,c_{1} \in (0,1), C,C_{1} >0$.
\end{corollary}

\begin{remark}
\label{rem1}
In the special case of $m=n$,
$\mathbb{E}_{0} {\mathbf{L}}^{2}=a_{n}$ which converges to
$ \prod _{k=1}^{\infty }1/(1-\lambda _{k}^{2})$. This result has been proved
in~\cite[Theorem 1]{bai2005broken} for fixed ${\mathsf{P}}_{X,Y}$ and for
${\mathcal{X}}= {\mathcal{Y}}= {\mathbb{R}}$ using a beautiful argument
based on P\'olya's Theorem~\cite[Theorem 7.7]{stanley2018algebraic} for
the cycle index polynomial of permutation group and the Cauchy integral
theorem. We extend this argument to allow unequal sample sizes
$m \le n$. The key observation involves a bipartite multigraph (cf.~Fig.~\ref{fig:2core})
with left vertices $[m]$ and right vertices $[n]$, with edges
$(i,i)$ and $(i,\pi (i))$ for $i \in [m]$, where $\pi $ is the latent random
injection. It turns out the second moment calculation only depends on the
action of $\pi $ restricted to the 2-core of this random graph, on which
$\pi $ acts as a permutation. Finally, we average over the size of this
2-core whose probability mass function is given by the sequence
$t_{\ell}$ in Theorem~\ref{thm:expression of second moment}.
\end{remark}

\begin{remark}
\label{rmk:comparison}
Similar but looser impossibility conditions have been derived in the prior
work~\cite{paslev2023testing} for the special case where $m=n$,
${\mathcal{X}}= {\mathcal{Y}}$ is a product space with dimension $d$, and
${\mathsf{P}}_{X,Y}$ is a product distribution across the $d$ dimensions
with identical marginals. Specifically, after translating into our notation,
the impossibility conditions in~\cite{paslev2023testing} are given by
$\rho ^{2}_{1}(X;Y) \left (1+ I_{\chi ^{2}}(X;Y)\right ) <1$ for fixed
$d$ and $I_{\chi ^{2}}(X;Y) < e-1$ for growing $d$. These conditions are
also derived by first expressing the second moment in terms of the cycle
index polynomial of the permutation group, similar to our approach. However,
unlike our analysis, which computes the exact asymptotic value of the cycle
index polynomial using P\'olya's Theorem and the Cauchy integral theorem,
\cite{paslev2023testing} obtains only loose upper bounds via Poisson approximation.
\end{remark}

\subsection{Test statistics and positive results}
\label{sec:test and positive results}

Before presenting sufficient conditions on strong detection, we discuss
the common rationale underlying the construction of various test statistics
in this paper. As mentioned in Section~\ref{sec:intro}, absent any correspondence
between the $X$ and $Y$ samples, it is statistically sufficient to summarize
them into empirical distributions
$\hat{P}_{X} \triangleq \frac{1}{n}\sum _{i=1}^{n} \delta _{X_{i}}$ and
$\hat{P}_{Y} \triangleq \frac{1}{m}\sum _{i=1}^{m}\delta _{Y_{i}}$. To
test the correlation between the data points, a natural idea is to pick
a pair of test functions that embed the sample space into a Euclidean space,
compute their empirical average on the $X$ and $Y$ samples respectively,
and test the \textit{independence} of these two random vectors.

Specifically, fix test functions
$f: {\mathsf{X}}\to {\mathbb{R}}^{k}$ and
$g: {\mathsf{Y}}\to {\mathbb{R}}^{k}$ such that
$\mathbb{E}_{{\mathsf{P}}_{X}}[f(X)] = \mathbb{E}_{{\mathsf{P}}_{Y}}[g(Y)]
= 0$. Let
\begin{equation*}
\bar{f}_{n} \equiv \frac{1}{\sqrt{n}} \sum _{i=1}^{n} f(X_{i}),
\quad \bar{g}_{m} \equiv \frac{1}{\sqrt{m}} \sum _{i=1}^{m} g(Y_{i}).
\end{equation*}
Under the classic large-sample asymptotics (namely, with fixed
${\mathsf{P}}_{X,Y}$ and $m,n\to \infty $), by the Central Limit Theorem
(CLT), the limiting joint law of $(\bar{f}_{n}, \bar{g}_{m})$ is independent
centered multivariate normal under $H_{0}$ and correlated normal under
$H_{1}$. We can then apply the optimal test for distinguishing two Gaussians
with different covariance matrices, known as the
\textit{quadratic discriminant analysis} (QDA). In settings where
${\mathsf{P}}_{X,Y}$ may depend on $n$ (e.g.~, through the ambient dimensions),
we can no longer rely on CLT; instead, we directly use the distance or
correlation between $\bar{f}_{n}$ and $\bar{g}_{m}$ as test statistic.

We will employ two classes of test functions: \textit{histogram} (empirical
frequencies on a finite partition of the sample space) and
\textit{spectral} (top few eigenfunctions in the spectral decomposition
(\ref{eq:SVD of L})).

\subsubsection{Fixed distribution}
\label{sec3.2.1}

For fixed ${\mathsf{P}}_{X,Y}$, the following theorem provides a sharp
characterization of strong detection.

\begin{theorem}
\label{thm:fixed P Q}
For problem~(\ref{eq:problem formulation}), assume that
${\mathsf{P}}_{X,Y}$ does not depend on $n$ and $m = \alpha n$ for some
$\alpha \in (0,1]$. Strong detection is possible if and only if
\begin{itemize}
\item $I_{\chi ^{2}}(X;Y) = \infty $ or $\rho (X;Y)=1$ for
$\alpha = 1$;
\item $I_{\chi ^{2}}(X;Y) = \infty $ for $\alpha < 1$.
\end{itemize}
\end{theorem}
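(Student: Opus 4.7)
The only-if direction follows directly from \prettyref{cor:lower bound general} applied to the fixed distribution $\sfP_{X,Y}$. For $\alpha < 1$, a finite $\Ichi(X;Y)$ satisfies $\Ichi \le C \cdot n/m$ with $C = \alpha \Ichi$, so the first bullet of the corollary yields $\Expect_0 \bfL^2 = O(1)$. For $\alpha = 1$, the additional hypothesis $\rho(X;Y) < 1$ supplies $\lambda_1 \le 1 - c_1$ with $c_1 = 1 - \rho$, matching the second bullet and again giving $\Expect_0 \bfL^2 = O(1)$, i.e.\ $\chi^2(\Prob_1 \| \Prob_0) = O(1)$. A standard second-moment argument then closes the deal: Cauchy-Schwarz gives $\Prob_1(A)^2 \le \Expect_0 \bfL^2 \cdot \Prob_0(A)$ for every event $A$, and optimizing over the acceptance region of an arbitrary test bounds the sum of Type-I and Type-II errors away from zero, ruling out strong detection.

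\textbf{Sufficiency when $\rho(X;Y)=1$ and $\alpha=1$.} Assume additionally $\Ichi(X;Y) < \infty$ (the case $\Ichi = \infty$ is handled below). Then $\calL$ is Hilbert-Schmidt and in particular compact, so the supremum defining $\rho = \lambda_1 = 1$ is attained by unit-variance, zero-mean eigenfunctions $\phi \in L_2(\sfP_X)$ and $\psi \in L_2(\sfP_Y)$ with $\Expect_{\sfP_{X,Y}}[\phi(X)\psi(Y)] = 1$. The equality case of Cauchy-Schwarz then forces $\phi(X) = \psi(Y)$ almost surely under $\sfP_{X,Y}$. Consider
\[
T_n \triangleq \sum_{i=1}^n \phi(X_i) - \sum_{i=1}^n \psi(Y_i).
\]
Under $\calH_1$, since $\pi$ is a permutation of $[n]$ and $\phi(X_{\pi(i)}) = \psi(Y_i)$ almost surely, one has $T_n = 0$ almost surely. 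Under $\calH_0$, the two sums are independent, and the classical CLT yields $T_n/\sqrt{n} \todistr N(0, 2)$. The test $\indc{|T_n| \le 1}$ therefore achieves vanishing Type-I and Type-II errors as $n \to \infty$.

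\textbf{Sufficiency when $\Ichi(X;Y) = \infty$.} This covers both $\alpha < 1$ and the remaining sub-case of $\alpha = 1$. I would invoke the rank-$r$ spectral test of \prettyref{sec:test and positive results}: when $\calL$ is Hilbert-Schmidt, pick the top-$r$ singular triples $(\lambda_k, \phi_k, \psi_k)_{k=1}^r$ and form
\[
V_n = \pth{\tfrac{1}{\sqrt n}\sum_{i=1}^n \phi_k(X_i),\; \tfrac{1}{\sqrt m}\sum_{i=1}^m \psi_k(Y_i)}_{k=1}^r \in \reals^{2r}.
\]
For each fixed $r$, the multivariate CLT gives $V_n \todistr N(0, I_{2r})$ under $\calH_0$ and $V_n \todistr N(0, \Sigma_1)$ under $\calH_1$, where the off-diagonal block of $\Sigma_1$ equals $\sqrt{\alpha}\,\diag{\lambda_1,\ldots,\lambda_r}$. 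The KL divergence between these two limits is $-\tfrac{1}{2}\sum_{k=1}^r \log(1 - \alpha \lambda_k^2) \ge \tfrac{\alpha}{2}\sum_{k=1}^r \lambda_k^2$, which diverges as $r \to \infty$ because $\Ichi = \sum_{k\ge 1}\lambda_k^2 = \infty$. A quadratic discriminant test between the two limit Gaussians therefore has power tending to $1$, and a diagonal choice $r = r_n \to \infty$ slowly enough to preserve CLT accuracy transfers the conclusion back to the original hypotheses. When $\calL$ fails to be Hilbert-Schmidt, the same scheme works after coarse-graining: by data processing, $\Ichi(X;Y) = \sup_{\text{partitions}} \Ichi^{(k)}$, so replacing $(\phi_k, \psi_k)$ with normalized indicators on a sufficiently fine finite partition yields uniformly bounded test functions realizing arbitrarily large discretized $\chi^2$-information, to which the QDA analysis applies with controlled finite-sample error.

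\textbf{Main obstacle.} The delicate step is the $\Ichi = \infty$ case. One has to balance the rank $r_n \to \infty$ (required to exploit divergence of $\sum \lambda_k^2$) against the CLT rate for the growing-dimensional vector $V_n$: higher-order eigenfunctions can have large $L_p$ norms, so the transfer from limiting Gaussian to actual finite-sample distribution needs either moment hypotheses on $(\phi_k, \psi_k)$ paired with a quantitative (Berry–Esseen) bound, or the histogram workaround above using uniformly bounded indicators. Verifying that this balance can be struck for every $\sfP_{X,Y}$ with $\Ichi = \infty$—with Type-I and Type-II errors vanishing simultaneously—is the main technical work in the positive direction.
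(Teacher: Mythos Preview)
Your impossibility argument and the $\rho(X;Y)=1$, $\alpha=1$ case are correct and match the paper's approach (the latter is essentially the paper's invocation of \prettyref{thm:upper bound general eigen 1}, or equivalently Case~I of its histogram argument).

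For the $\Ichi(X;Y)=\infty$ case, however, you are overcomplicating things and missing the key simplification. Note first that $\Ichi=\infty$ is \emph{exactly} when $\calL$ fails to be Hilbert--Schmidt, so your spectral approach with top-$r$ eigenfunctions is never guaranteed to be available in this case; only your ``histogram workaround'' is relevant. More importantly, the balancing act you worry about---sending the embedding dimension $r_n\to\infty$ while controlling the CLT error---is entirely unnecessary. The paper's trick is this: fix an arbitrary $K>0$, choose a \emph{fixed} finite partition achieving discretized $\chi^2$-information $>K$, and apply the ordinary fixed-dimensional CLT. The limiting Gaussians then satisfy $1-\dTV\le \exp(-\alpha K/8)$ via the Hellinger bound, so the QDA test has asymptotic error at most $\exp(-\alpha K/8)$. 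Since $K$ was arbitrary, $\limsup_n(\text{min error})=0$, which is strong detection. No growing dimension, no Berry--Esseen, no diagonalization is needed.

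There is also a non-trivial measure-theoretic point you gloss over: you need $\Ichi(X;Y)=\sup_{\calE}\chi^2(\sfP_\calE\|\sfQ_\calE)$ where the supremum is over \emph{rectangular} partitions $\calE=\{I_k\times J_l\}$ (products of partitions of $\calX$ and $\calY$ separately), not arbitrary partitions of $\calX\times\calY$. The standard variational formula gives the latter; upgrading to rectangles requires an approximation argument (\prettyref{lmm:property of f-divergence on product space} in the paper). This matters because only rectangular partitions let you form separate histogram vectors from the $X$ and $Y$ samples.
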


The necessity part of~Theorem~\ref{thm:fixed P Q} directly follows from~Corollary~\ref{cor:lower bound general}.
The challenging part is the sufficiency, which is left in~\cite{bai2005broken}
as a conjecture. Since $I_{\chi ^{2}}(X;Y)$ may be infinity, the operator
${\mathcal{L}}$ as defined in~(\ref{eq:H-S operator}) may not be Hilbert-Schmidt
and may not admit a spectral decomposition.

To overcome this challenge, we leverage the following crucial observation.
When $X_{i}$'s and $Y_{i}$'s are real-valued data, the empirical cumulative
distribution functions have Gaussian-like fluctuations. These fluctuations
contain all the information there is for detection. More specifically,
let
$\hat{F}_{X}(t) = \frac{1}{n} \sum _{i=1}^{n} {\mathbf{1}_{\left \{{X_{i}
\le t}\right \}}} $ and
$\hat{F}_{Y}(t) = \frac{1}{m} \sum _{i=1}^{m} {\mathbf{1}_{\left \{{Y_{i}
\le t}\right \}}} $ denote the empirical CDFs. Let $F_{X}(t)$ and
$F_{Y}(t)$ denote the true CDFs. Then
\begin{equation*}
\left ( \sqrt{n}(\hat{F}_{X}(t) - F_{X}(t)), \sqrt{m}( \hat{F}_{Y}(t) -
F_{Y}(t)) \right ) \to \left ( B_{X}(t),B_{Y}(t)\right ),
\end{equation*}
where $(B_{X}, B_{Y})$ are two Brownian bridges that are independent under
${\mathcal{H}}_{0}$ and correlated under ${\mathcal{H}}_{1}$. (See
\cite[Sec.~2.2]{ding2021efficient} about this covariance structure.)

This observation motivates us to design tests based on the histograms,
following~\cite{ding2021efficient}. We first appropriately partition the
space ${\mathcal{X}}$ into disjoint regions $\{I_{k}\}_{k=1}^{w}$ and
${\mathcal{Y}}$ into $\{J_{l}\}_{l=1}^{w}$. By the variational representation
of divergences, we can ensure that the $\chi ^{2}$-divergence between discretized
${\mathsf{P}}_{X,Y}$ and ${\mathsf{P}}_{X}\otimes {\mathsf{P}}_{Y}$ is
sufficiently large. We then compute the empirical frequency of
$X_{i}$'s and $Y_{i}$'s in each region to obtain a histogram statistic.
Specifically, define the standardized histogram statistics
\begin{align}
s_{k} = \frac{1}{\sqrt{n}} \sum _{i=1}^{n}
\frac{{\mathbf{1}_{\left \{{X_{i} \in I_{k}}\right \}}} - r_{k}}{ \sqrt{r_{k}}},
\quad t_{l}= \frac{1}{\sqrt{m}} \sum _{i=1}^{m}
\frac{{\mathbf{1}_{\left \{{Y_{i} \in J_{l}}\right \}}} - z_{l}}{\sqrt{z_{l}}},
\label{eq:histogram vectors}
\end{align}
where $r_{k} = {\mathsf{P}}_{X}(I_{k})$ for $k\in [w]$ and
$z_{l} = {\mathsf{P}}_{Y}(J_{l})$ for $l\in [w]$.

Let $s=(s_{1},\ldots , s_{w})$ and $t=(t_{1},\ldots , t_{w})$. By the CLT,
as $n \to \infty $, $(s,t)$ converges in distribution to
${\mathcal{N}}(\mathbf 0, \Sigma _{0})$ and
${\mathcal{N}}(\mathbf 0, \Sigma _{1})$ under ${\mathcal{H}}_{0}$ and
${\mathcal{H}}_{1}$, respectively, where
\begin{equation}
\Sigma _{0}=
\begin{bmatrix}
A & \mathbf 0
\\
\mathbf 0& B
\end{bmatrix}
, \quad \Sigma _{1}=
\begin{bmatrix}
A & \sqrt{\alpha}C
\\
\sqrt{\alpha}C^{\top }& B
\end{bmatrix}
,
\label{eq:Cov}
\end{equation}
where $A = \mathbf I_{w} - \gamma \gamma ^{\top }$ with
$\gamma _{i} = \sqrt{r_{i}}$,
$B =\mathbf I_{w} - \beta \beta ^{\top }$ with
$\beta _{i} = \sqrt{z_{i}}$, and
$C ={\widetilde{P}}-\gamma \beta ^{\top }$ with
${\widetilde{P}}_{k,l} \equiv{\mathsf{P}}_{X,Y}( I_{k}\times J_{l})/
\sqrt{r_{k}z_{l}}$. Crucially, the left (resp.~right) singular vectors
of $C$ are given by those of $A$ (resp.~$B$). This reduces the problem
of distinguishing two Gaussians with different covariance matrices and
we can apply the optimal likelihood ratio test under these limiting Gaussian
laws. In the non-degenerate case of $\rho (X;Y)<1$,\footnote{This condition
ensures that $\Sigma _{1}$ has all but one positive eigenvalues. Indeed,
one can show that eigenvalues of $\Sigma _{1}$ equals
$1\pm \sqrt{\alpha} \mu _{k}$, where $\mu _{k}$'s are the singular values
of the discretized likelihood ratio operator, which by definition is at
most $\rho (X;Y)$. See Section~\ref{sec:pf-upper} for details.} it reduces
to a QDA test: reject ${\mathcal{H}}_{0}$ if
\begin{align}
\label{eq:histogram test}
T_{\mathrm{{hist}}} \triangleq -\frac{1}{2}
\begin{bmatrix}
s^{\top }& t^{\top }
\end{bmatrix} \left ( \Sigma _{1}^{\dag }- \Sigma _{0}^{\dag }
\right )
\begin{bmatrix}
s
\\
t
\end{bmatrix}- \frac{1}{2}\sum _{k=1}^{w} \log (1-\alpha \mu _{k}^{2}),
\end{align}
is positive. Here $\Sigma _{0}^{\dag}, \Sigma _{0}^{\dag}$ are pseudoinverses
because the covariance matrices are rank-deficient (histogram normalizes)
and $\{\mu _{k}\}_{k=1}^{w}$ are the singular values of the off-diagonal
part $C$ in (\ref{eq:Cov}).

Finally, we bound the TV distance between
${\mathcal{N}}(\mathbf 0, \Sigma _{0})$ and
${\mathcal{N}}(\mathbf 0, \Sigma _{1})$ in terms of the Hellinger distance,
which, thanks to normality, can be further bounded in terms of the
$\chi ^{2}$-divergence between the discretized ${\mathsf{P}}_{X,Y}$ and
${\mathsf{P}}_{X}\otimes {\mathsf{P}}_{Y}$. Since this $\chi ^{2}$-divergence
can be made arbitrarily large by choosing the appropriate partition, we
conclude that the test error of $T_{\mathrm{{hist}}}$ can be arbitrarily small,
achieving strong detection.

\subsubsection{General distribution}
\label{sec3.2.2}

Next, we consider the case where ${\mathsf{P}}_{X,Y}$ depends on $n$, but
$I_{\chi ^{2}}(X;Y) <\infty $, so that the likelihood ratio operator
${\mathcal{L}}$ admits the spectral decomposition as per~(\ref{eq:SVD of L}).
Thus we can design a test based on eigenfunctions and eigenvalues. As described
at the beginning of this section, we use the top $r$ eigenfunctions as
spectral embedding:
\begin{equation}
\Phi _{r} \equiv \Phi _{r}({\mathbf{X}}) \triangleq
\frac{1}{\sqrt{n}} \sum _{i=1}^{n}
\begin{bmatrix}
\phi _{1}(X_{i})
\\
\vdots
\\
\phi _{r}(X_{i})
\end{bmatrix}
, \quad \Psi _{r} \equiv \Psi _{r}({\mathbf{Y}}) \triangleq
\frac{1}{\sqrt{m}} \sum _{i=1}^{m}
\begin{bmatrix}
\psi _{1}(Y_{i})
\\
\vdots
\\
\psi _{r}(Y_{i})
\end{bmatrix}
\label{eq:eigenvector test}
\end{equation}
and then compute either their distance or inner product as test statistics.

Suppose the maximal correlation $\rho (X; Y)$ is high, in which case the
top eigenfunctions are close, we may simply compute their squared difference:
\begin{equation}
\label{eq:statistic when lambda_1 to 1}
T_{\mathrm{{top}}}\triangleq (\Phi _{1}-\Psi _{1})^{2}=\left (
\frac{1}{\sqrt{n}} \sum _{i=1}^{n} \phi _{1}(X_{i}) -
\frac{1}{\sqrt{m}} \sum _{i=1}^{m} \psi _{1}(Y_{i}) \right )^{2}.
\end{equation}
The following theorem shows that when $\rho (X; Y) \to 1$ and
$m/n \to 1$ (and some mild moment conditions hold), strong detection is
achieved by rejecting ${\mathcal{H}}_{0}$ when $ T_{\mathrm{{top}}}$ is smaller
than an appropriately chosen threshold.

\begin{theorem}
\label{thm:upper bound general eigen 1}
Assume that $I_{\chi ^{2}}(X;Y) < \infty $ for any $n$ and
$m=(1-o(1))n$. If either
$\mathbb{E}_{{\mathsf{P}}_{X}} |\phi ^{3}_{1}(X)| = o(\sqrt{n})$ or
$\mathbb{E}_{{\mathsf{P}}_{Y}} |\psi ^{3}_{1}(Y)| = o(\sqrt{m})$,
$T_{\mathrm{{top}}}$ in~(\ref{eq:statistic when lambda_1 to 1}) achieves
strong detection, provided that
\begin{equation}
\label{eq:_maximal_correlation_goes_to_1_unequal_data_VTEX1}
\lim _{n\to \infty} \rho (X;Y) = 1.
\end{equation}
\end{theorem}

When $\rho (X;Y)=1-\Omega (1)$ or $m/n=1-\Omega (1)$, we need to incorporate
higher-order eigenpairs. To this end, we compute the weighted inner product
between the spectral embeddings with weights given by the eigenvalues:
\begin{equation}
T_{\mathrm{{inner}}}\triangleq \Phi _{r}^{\top }\mathsf{diag} \left \{ {
\lambda _{1},\cdots ,\lambda _{r}} \right \} \Psi _{r}
\label{eq:Tinner}
\end{equation}
Alternatively, define
\begin{equation*}
L_{r}(x,y) = \sum _{k=1}^{r} \lambda _{k} \phi _{k}(x) \psi _{k}(y),
\end{equation*}
which is a truncated version of the likelihood ratio $L$ in
(\ref{eq:SVD of L}). (Note that $L_{\infty}=L - 1$.) We also define
the rank-$r$ truncated version of the $\chi ^{2}$-information as the second
moment of $L_{r}$ under ${\mathsf{P}}_{X} \otimes {\mathsf{P}}_{Y}$.
\begin{equation*}
I_{\chi ^{2}}^{(r)}(X;Y) \triangleq \mathbb{E}_{(X,Y)\sim {\mathsf{P}}_{X}
\otimes {\mathsf{P}}_{Y}}\left [ L^{2}_{r}(X,Y) \right ]= \sum _{k=1}^{r}
\lambda ^{2}_{k}.
\end{equation*}
Then (\ref{eq:Tinner}) can be rewritten as:
\begin{equation}
\label{eq:m/n sum lambda^2_k goes to infinity}
T_{\mathrm{{inner}}}= \frac{1}{\sqrt{nm}} \sum _{i=1}^{n} \sum _{j=1}^{m}L_{r}(X_{i},Y_{j})
= \frac{1}{\sqrt{nm}} \sum _{i=1}^{n}\sum _{j=1}^{m} \sum _{k=1}^{r}
\lambda _{k} \phi _{k}(X_{i})\psi _{k}(Y_{j}).
\end{equation}

The next result shows that $T_{\mathrm{{inner}}}$ achieves strong detection provided
that $I_{\chi ^{2}}^{(r)}(X;Y)\to \infty $ and some additional moment condition
holds.

\begin{theorem}
\label{thm:upper bound general eigen r}
Assume that $I_{\chi ^{2}}(X;Y) < \infty $ for any $n$. Suppose that
for some $r$ (possibly depending on $n$)
$\mathbb{E}_{{\mathsf{P}}_{X,Y}} L^{2}_{r}(X,Y) = o\left ( m I_{\chi ^{2}}^{(r)}(X;Y)^{2}
\right )$. Then $T_{\mathrm{{inner}}}$ in~(\ref{eq:m/n sum lambda^2_k goes to infinity})
achieves strong detection, provided that:
\begin{equation*}
\lim _{n\to \infty} \frac{m}{n}I_{\chi ^{2}}^{(r)}(X;Y) = \infty
\end{equation*}
\end{theorem}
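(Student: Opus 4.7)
The plan is to prove strong detection via Chebyshev's inequality, by showing that $|\Expect_1[\Tinner]-\Expect_0[\Tinner]|$ dominates $\sqrt{\var_0(\Tinner)+\var_1(\Tinner)}$, so that thresholding $\Tinner$ near the midpoint of the two means separates the hypotheses with vanishing Type-I and Type-II errors.

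The mean and null variance are straightforward. Under $\calH_0$, all $\phi_k,\psi_k$ ($k\geq 1$) have zero mean and the two samples are independent, so $\Expect_0[\Tinner]=0$. Under $\calH_1$, only matched pairs $(X_{\pi(j)},Y_j)$ contribute nontrivially via the spectral identity $\Expect_{\sfP_{X,Y}}[\phi_k(X)\psi_{k'}(Y)]=\lambda_k\delta_{kk'}$, yielding $\Expect_1[\Tinner]=\sqrt{m/n}\,\Ichir(X;Y)$. Orthonormality of the eigenfunctions together with independence of the samples gives $\var_0(\Tinner)=\Ichir(X;Y)$ directly.

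The heart of the proof is bounding $\var_1(\Tinner)$. Because $\Expect_1[\Tinner\mid\pi]$ does not depend on $\pi$, the law of total variance reduces the task to bounding $\Expect_\pi[\var(\Tinner\mid\pi)]$. I would decompose $\sqrt{nm}\,\Tinner=A+B$ conditional on $\pi$, where $A=\sum_{j=1}^m L_r(X_{\pi(j)},Y_j)$ is an iid sum over matched pairs from $\sfP_{X,Y}$ and $B=\sum_j\sum_{i\neq\pi(j)}L_r(X_i,Y_j)$ collects the unmatched terms (each has zero conditional mean since $X_i$ is independent of $Y_j$ for $i\neq\pi(j)$). Three facts suffice: (a) $\var(A\mid\pi)\leq m\,\Expect_{\sfP_{X,Y}}[L_r^2(X,Y)]$; (b) $\var(B\mid\pi)\leq m(n-1)\Ichir+m(m-1)\sum_k\lambda_k^4$, obtained by enumerating the configurations $(i,i',j,j',\pi)$ in $\Expect[L_r(X_i,Y_j)L_r(X_{i'},Y_{j'})\mid\pi]$ and showing that the only nonvanishing contributions come from the diagonal $(i,j)=(i',j')$ and the ``swap'' configuration $(i,i')=(\pi(j'),\pi(j))$ with $j\neq j'$; and (c) $\Cov(A,B\mid\pi)=0$, which reduces to the identity $\Expect_{X\sim\sfP_X}\Expect_{(X',Y')\sim\sfP_{X,Y}}[L_r(X',Y')L_r(X,Y')]=0$, holding because the outer expectation annihilates every $\phi_k$ with $k\geq 1$. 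Collecting gives $\var_1(\Tinner)\leq \Expect_{\sfP_{X,Y}}[L_r^2]/n+\Ichir+(m/n)\sum_k\lambda_k^4$.

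To conclude, the squared signal $(m/n)\Ichir^2$ dominates each variance term: $\Ichir=o((m/n)\Ichir^2)$ by the hypothesis $(m/n)\Ichir\to\infty$; $\Expect_{\sfP_{X,Y}}[L_r^2]/n=o((m/n)\Ichir^2)$ by the moment assumption $\Expect_{\sfP_{X,Y}}[L_r^2]=o(m\Ichir^2)$; and $(m/n)\sum_k\lambda_k^4=o((m/n)\Ichir^2)$ because $\sum_k\lambda_k^4\leq\lambda_1^2\Ichir\leq\Ichir$ while $\Ichir\to\infty$ (which follows from $(m/n)\Ichir\to\infty$ and $m\leq n$). Chebyshev's inequality applied to the test $\indc{\Tinner>\frac{1}{2}\sqrt{m/n}\,\Ichir}$ then delivers vanishing Type-I and Type-II errors. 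The main obstacle is the combinatorial bookkeeping behind (b) and (c): carefully classifying index configurations by their overlap with the latent injection $\pi$ and repeatedly invoking the marginal-mean-zero property $\Expect_{\sfP_X}[\phi_k]=\Expect_{\sfP_Y}[\psi_k]=0$ to kill every contribution except the diagonal and the mild ``swap'' term, which is then absorbed by the divergence of $\Ichir$.
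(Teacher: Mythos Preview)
Your proposal is correct and takes essentially the same approach as the paper: compute the means under both hypotheses and the variances via case analysis on index configurations, then conclude by Chebyshev's inequality with the midpoint threshold. The only difference is organizational: you split $\sqrt{nm}\,\Tinner=A+B$ into matched and unmatched pairs and verify $\Cov(A,B\mid\pi)=0$, whereas the paper fixes $\pi=\mathrm{Id}$ by symmetry and directly enumerates five cases for $(i,j,i',j')$; both routes land on the same bound $\var_1(\Tinner)\le \frac{1}{n}\Expect_{\sfP_{X,Y}}[L_r^2]+2\Ichir$ (your extra $(m/n)\sum_k\lambda_k^4$ term is absorbed into $\Ichir$ since $\lambda_k\le 1$ and $m\le n$).
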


\begin{remark}
\label{rem3}
Suppose $\rho (X;Y) \le 1-\Omega (1)$. If the moment condition were to
hold for $r=\infty $, then the sufficient condition
$(m/n) I_{\chi ^{2}}^{\infty}(X;Y) \to \infty $ would be tight, since
$I_{\chi ^{2}}^{\infty}(X;Y)=I_{\chi ^{2}}(X;Y)$ and strong detection is
impossible if $(m/n) I_{\chi ^{2}}(X;Y) =O(1)$ as established in~Corollary~\ref{cor:lower bound general}.
However, the moment condition may not hold for large $r$, preventing us
from obtaining a tight sufficient condition in general cases.
\end{remark}

\subsection{Gaussian and Bernoulli models}
\label{sec:_gaussian_and_bernoulli_models_VTEX1}

As applications of our general results (notably,
Corollary~\ref{cor:lower bound general},~Theorem~\ref{thm:upper bound general eigen 1}
and~\ref{thm:upper bound general eigen r}), we determine the sharp detection
threshold for the Gaussian and Bernoulli model, in both low-dimensional
and high-dimensional regimes. We focus on the proportional regime of
$m = \alpha n$ for some constant $\alpha \in (0,1]$.

\begin{corollary}
\label{cor:Gaussian}
For the Gaussian model in~Example~\ref{exam:Gaussian} with
$m=\alpha n$, the necessary and sufficient condition for strong detection
is as follows:
\begin{itemize}
\item Fixed $d$: $\rho ^{2} = 1 -o(1)$.
\item Growing $d$: $\rho ^{2} d = \omega (1)$.
\end{itemize}
\end{corollary}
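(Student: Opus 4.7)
The plan is to specialize \prettyref{cor:lower bound general}, \prettyref{thm:upper bound general eigen 1}, and \prettyref{thm:upper bound general eigen r} to the Gaussian model, for which I first record the spectral data. Since $\sfP_{X,Y}$ factors across the $d$ coordinates with each coordinate bivariate Gaussian of correlation $\rho$, Mehler's formula identifies the eigenfunctions of $\calL$ as products of univariate normalized Hermite polynomials indexed by multi-indices $\alpha \in \naturals^d$, with singular values $\lambda_\alpha = \rho^{|\alpha|}$. Hence $\rho(X;Y) = \rho$ and $\Ichi(X;Y) = (1-\rho^2)^{-d} - 1$. Impossibility in both regimes then follows from \prettyref{cor:lower bound general}: for fixed $d$ with $\rho^2 \leq 1 - c$ one has $\Ichi = O(1)$ and $\rho(X;Y) \leq 1 - \Omega(1)$, while for growing $d$ with $\rho^2 d = O(1)$ the inequality $-\log(1-x) \leq x/(1-x)$ yields $\Ichi \leq \exp(d\rho^2/(1-\rho^2)) - 1 = O(1)$ and $\rho(X;Y) = o(1)$.

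For the possibility side in the growing-$d$ regime with $\rho^2 d = \omega(1)$, I would apply \prettyref{thm:upper bound general eigen r} with $r = d$ using the level-one eigenfunctions $\phi_{e_j}(x) = x_j$, so that $L_r(x,y) = \rho\, x^\top y$ and $\Ichir = d\rho^2 \to \infty$; the bivariate-Gaussian identity $\Expect_{\sfP_{X,Y}}[(X^\top Y)^2] = d^2\rho^2 + d(1+\rho^2)$ gives $\Expect[L_r^2] = O(d^2 \rho^4) = o(\alpha n d^2 \rho^4) = o(m\Ichir^2)$. In the fixed-$d$ regime with $\alpha = 1$ and $\rho \to 1$, \prettyref{thm:upper bound general eigen 1} applies with $\phi_1(x) = \psi_1(y) = x_1$: the third-moment hypothesis is automatic since $\Expect|Z|^3$ is a finite constant, and \prettyref{eq: maximal correlation goes to 1 unequal data} reduces to $\rho \to 1$.

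The main obstacle is the remaining fixed-$d$, $\alpha < 1$, $\rho \to 1$ sub-case. Here $\Ttop$ yields only a constant TV gap (its mean under $\calH_1$ tends to $2(1-\sqrt{\alpha}) > 0$ versus $2$ under $\calH_0$), and $\Expect_{\sfP_{X,Y}}[L^2] = \infty$ near $\rho = 1$ forbids $r = \infty$ in \prettyref{thm:upper bound general eigen r}. I would apply \prettyref{thm:upper bound general eigen r} with $r = r_n$ equal to the number of multi-indices of total degree at most a slowly growing $J = J_n$, which makes $\Ichir = \Theta(J^d) \to \infty$ and hence $(m/n)\Ichir \to \infty$. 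The delicate step is the moment bound $\Expect_{\sfP_{X,Y}}[L_r^2] = o(m\Ichir^2)$: a Gaussian hypercontractivity bound $\|L_r\|_{L^4(\sfP_X \otimes \sfP_Y)} \leq 3^J \sqrt{\Ichir}$ on the degree-$2J$ chaos, combined with Cauchy--Schwarz and $\|L\|_{L^2(\sfP_X \otimes \sfP_Y)}^2 = (1-\rho^2)^{-d}$, yields $\Expect_{\sfP_{X,Y}}[L_r^2] \leq 9^J (1-\rho^2)^{-d/2} \Ichir$, so $J$ must be calibrated against the rate of $1 - \rho^2$ to make this $o(n \Ichir^2)$. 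As a fallback for rates where this hypercontractive balance breaks down, one can instead invoke the histogram QDA construction from the proof of \prettyref{thm:fixed P Q}, whose CLT-based analysis extends to $n$-dependent distributions provided one drives the discretized $\chi^2$-divergence to infinity by refining the partition as $\rho \to 1$.
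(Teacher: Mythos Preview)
Your handling of impossibility (both regimes), the growing-$d$ possibility via \prettyref{thm:upper bound general eigen r} with $r=d$, and the fixed-$d$, $\alpha=1$ possibility via \prettyref{thm:upper bound general eigen 1} are all correct and match the paper's route.

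The gap is in the fixed-$d$, $\alpha<1$ case. Your hypercontractive bound $\Expect_{\sfP_{X,Y}}[L_r^2]\le 9^{J}(1-\rho^2)^{-d/2}\Ichir$ cannot be balanced against $n\Ichir^2$ once $1-\rho_n^2$ decays faster than any power of $n$: the factor $(1-\rho^2)^{-d/2}$ already exceeds any polynomial in $n$, so no choice of $J$ rescues the moment condition of \prettyref{thm:upper bound general eigen r}. The histogram fallback as stated has the same structural problem: to drive the discretized $\chi^2$ to infinity you say you will refine the partition as $\rho_n\to 1$, but then the histogram dimension $w=w_n$ grows with $n$ and the fixed-dimensional CLT underlying the QDA analysis in \prettyref{thm:fixed P Q} no longer applies without further argument.

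The paper bypasses \prettyref{thm:upper bound general eigen r} entirely here and uses a direct $\epsilon$-net argument. The two key moves you are missing are: (i) a \emph{monotonicity reduction}---since one can always add independent Gaussian noise to both samples to decrease $\rho$, one may assume without loss of generality that $\rho_n$ is eventually equal to a \emph{fixed} value $\rho^*<1$; and (ii) for each target error $\epsilon>0$, fix $r=r(\epsilon,\alpha)$ and take $\rho^*$ so that $1-(\rho^*)^2=1/(r+1)$. Now the distribution is genuinely fixed, so the $r$-dimensional spectral embedding $(\Phi_r,\Psi_r)$ obeys a standard CLT, and the Hellinger bound on the limiting Gaussians gives $1-\dTV(\sfN_0,\sfN_1)\le\exp(-\tfrac{\alpha}{8}\sum_{k=1}^r(\rho^*)^{2k})\le\exp(-\alpha r/16)$, which is $\le\epsilon$ for $r=\tfrac{16}{\alpha}\log\tfrac{1}{\epsilon}$. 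Letting $\epsilon\downarrow 0$ gives strong detection. This avoids any moment growth issue because $r$ is held fixed while $n\to\infty$.
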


We discuss the test statistics used in Corollary~\ref{cor:Gaussian}. For growing
$d$, we consider $T_{\mathrm{{inner}}}$ by applying
Theorem~\ref{thm:upper bound general eigen r} with $r = d$. For fixed
$d$, one can still apply Theorem~\ref{thm:upper bound general eigen r} with
$r$ slowly growing with $n$ to establish the upper bound. However, verifying
the second moment condition is technically involved. Instead, we directly
apply $T_{\mathrm{eigen}}$ defined in (\ref{eq:Teigen}). Since this statistic
corresponds to the QDA test between two Gaussians, we can leverage the
Gaussian limits of the vector $(\Phi _{r},\Psi _{r})$ to provide a more
concise proof.

\begin{remark}
\label{rmk:unbalanced}
Corollary~\ref{cor:Gaussian} establishes the sharp threshold for strong detection
when $m = \Theta (n)$. However, for highly unbalanced sample sizes
$m = o(n)$, the sharp threshold remains open even in low dimensions. Specifically,
recall from~Corollary~\ref{cor:lower bound general} that strong detection is
impossible if $(1-\rho ^{2})^{-d} m/n \leq C$ for some absolute constant
$C>0$. It is unclear whether strong detection is achievable when
$(1-\rho ^{2})^{-d} m/n \to \infty $. Applying~Theorem~\ref{thm:upper bound general eigen r}
with $r=d$ in the Gaussian case only yields the sufficient condition
$\rho ^{2} d m/n \to \infty $.
\end{remark}

It is interesting to note that for equal sample sizes $m=n$ (so
$\alpha = 1$), a simple test based on the sample means attains the optimal
threshold in both low and high dimensions. Let
\begin{equation*}
\bar X \triangleq \frac{\sum _{i=1}^{n} X_{i}}{\sqrt{n}},\quad \bar Y
\triangleq \frac{\sum _{i=1}^{m} Y_{i}}{\sqrt{m}},
\end{equation*}
which are jointly distributed as
${\mathcal{N}}(\mathbf 0,\mathbf I_{2d})$ under ${\mathcal{H}}_{0}$ and
${\mathcal{N}}\left (\mathbf 0,[
\begin{smallmatrix}
\mathbf I_{d} & \sqrt{\alpha}\rho \mathbf I_{d}
\\
\sqrt{\alpha}\rho \mathbf I_{d} & \mathbf I_{d}
\end{smallmatrix}
]\right )$ under ${\mathcal{H}}_{1}$. The optimal likelihood ratio test
between these two Gaussians is the following QDA test: reject
${\mathcal{H}}_{0}$ if
\begin{align}
T_{\text{\textup{means}}}= - \frac{\alpha \rho ^{2}}{2(1-\alpha \rho ^{2})}
\|\bar X-\bar Y\|^{2} + \frac{\sqrt{\alpha}\rho}{1+\sqrt{\alpha}\rho}
\left \langle \bar X, \bar Y \right \rangle - \frac{d}{2} \log (1-
\alpha ^{2}\rho ^{2})
\label{eq:LRT_sample_means}
\end{align}
is positive.

One can verify that when $\alpha = 1$, the total variation distance between
those two Gaussians converges to $1$ when $\rho ^{2}d \to \infty $ for
growing $d$ or $\rho ^{2} \to 1$ for fixed $d$, which are precisely the
optimal detection threshold.\footnote{In fact, instead of
(\ref{eq:LRT_sample_means}), it is sufficient to consider the simpler
test statistic $\|\bar X-\bar Y\|^{2}$ for low dimensions ($\rho $ close
to $1$) and $\langle \bar X, \bar Y \rangle $ for high dimensions ($
\rho $ close to 0). The latter is the same test considered in
\cite[Theorem 1]{zeynep2022detecting}.} Somewhat surprisingly, for
$\alpha < 1$, test based on sample means only succeeds in high dimensions.
Indeed, if $d$ is fixed, the joint distributions of
$(\bar X,\bar Y)$ are not perfectly distinguishable even if
$\rho =1$ because averaging over two unequal samples destroyed their perfect
correlation. It turns out that sample means correspond to linear spectral
embedding and incorporating higher-order spectral embedding (Hermite polynomials)
attains the optimal detection threshold. We postpone this discussion to
the next section -- see (\ref{eq:Teigen}).

Next, we turn to the Bernoulli model in~Example~\ref{exam:Bernoulli}.
\begin{corollary}
\label{cor:Bernoulli}
For the Bernoulli model with $m=\alpha n$ and $q \le 1/2$,\footnote{Since
we can interchange $0$ and $1$, the Bernoulli model with parameter
$q$ is equivalent to the one with $1-q$. Therefore, we assume
$q \le 1/2$ without loss of generality.} the sufficient and necessary condition
for strong detection is as follows:

\begin{itemize}
\item Fixed $d$: $\alpha = 1$, $\rho ^{2} = 1 -o(1)$, and
$nq = \omega (1)$.
\item Growing $d$: $\rho ^{2} d = \omega (1)$ and
$ndq\left | \rho \right | = \omega (1)$.
\end{itemize}
\end{corollary}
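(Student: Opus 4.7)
The plan is to reduce Corollary~\ref{cor:Bernoulli} to applications of Corollary~\ref{cor:lower bound general}, Theorem~\ref{thm:upper bound general eigen 1}, and Theorem~\ref{thm:upper bound general eigen r} after diagonalizing the likelihood ratio operator. For a single correlated Bernoulli pair, $\calL$ is a rank-$2$ operator on a two-dimensional space with nontrivial singular value $|\rho|$ and normalized eigenfunction $\phi(x)=(x-q)/\sqrt{q(1-q)}$. Since $\sfP_{X,Y}=\sfp^{\otimes d}$ factorizes, the eigenfunctions of the full operator are tensor products $\phi_S=\prod_{k\in S}\phi(x_k)$ indexed by $S\subseteq[d]$, with singular values $|\rho|^{|S|}$. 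Hence $\rho(X;Y)=|\rho|$ and $\Ichi(X;Y)=(1+\rho^2)^d-1$.

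For necessity, Corollary~\ref{cor:lower bound general} yields impossibility whenever (i) $\alpha<1$ and $\Ichi=O(1)$, or (ii) $\alpha=1$, $\rho(X;Y)\le 1-c$, and $\Ichi=O(1)$. For fixed $d$, $\Ichi\le 2^d-1=O(1)$, so (i) forces $\alpha=1$ and then (ii) forces $\rho^2=1-o(1)$. For growing $d$, (i) forces $\Ichi=\omega(1)$, equivalent to $\rho^2 d=\omega(1)$ via $\log(1+\rho^2)\asymp\rho^2$. The remaining density conditions $nq=\omega(1)$ (fixed $d$) and $ndq|\rho|=\omega(1)$ (growing $d$) fall outside the regime where Corollary~\ref{cor:lower bound general} applies---there $\Ichi$ can diverge---and I would prove them by a sparse-regime argument: when the effective signal size is $O(1)$, both $\Prob_0$ and $\Prob_1$ admit Poisson/sparse-point-process limits with common marginals differing only in their coupling, yielding $\TV(\Prob_0,\Prob_1)$ bounded away from $1$. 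This sparse impossibility is the main technical obstacle.

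For sufficiency in the fixed-$d$ case, I would apply Theorem~\ref{thm:upper bound general eigen 1} with $\phi_1=\psi_1=\phi$ for any single coordinate: since
\[
\Expect_{\sfP_X}|\phi|^3=(1-q)^{3/2}q^{-1/2}+q^{3/2}(1-q)^{-1/2}\asymp q^{-1/2},
\]
the moment condition reduces to $nq\to\infty$, and strong detection follows once $|\rho|\to 1$. For growing $d$, I would apply Theorem~\ref{thm:upper bound general eigen r} with $r=d$, retaining exactly the $d$ first-order eigenfunctions, so that $\Ichir=d\rho^2$ and the signal condition $(m/n)\Ichir=\alpha d\rho^2\to\infty$ follows from $\rho^2 d=\omega(1)$. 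A direct computation gives $\Expect_{\sfp}[\phi^2(X)\psi^2(Y)]=O(1+|\rho|/q)$, hence using the product structure
\[
\Expect_{\sfP_{X,Y}}L_d^2(X,Y)=\rho^2\!\left[d\,\Expect_{\sfp}[\phi^2\psi^2]+d(d-1)\rho^2\right]=O\!\left(\rho^2 d+d|\rho|^3/q+d^2\rho^4\right),
\]
and the moment condition $\Expect L_d^2=o(m\Ichir^2)=o(nd^2\rho^4)$ reduces to $ndq|\rho|\to\infty$ (the $nd\rho^2\to\infty$ term being automatic from $\rho^2d=\omega(1)$), matching the claim.
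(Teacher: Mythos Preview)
Your overall architecture is correct and matches the paper: the eigendecomposition of the Bernoulli kernel, the application of Corollary~\ref{cor:lower bound general} for the $\rho$-based necessity, Theorem~\ref{thm:upper bound general eigen 1} for fixed-$d$ sufficiency (with the third-moment check yielding $nq\to\infty$), and Theorem~\ref{thm:upper bound general eigen r} with $r=d$ for growing-$d$ sufficiency (with your moment computation correctly reducing to $ndq|\rho|\to\infty$) are all exactly what the paper does.

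The one substantive divergence is your treatment of the sparse necessity conditions $nq=\omega(1)$ and $ndq|\rho|=\omega(1)$. You propose a Poisson/sparse-point-process limit argument and flag it as ``the main technical obstacle.'' The paper instead uses a much simpler \emph{oracle} argument: reveal the latent injection $\pi$, so the problem becomes testing $\sfP_{X,Y}^{\otimes m}$ against $(\sfP_X\otimes\sfP_Y)^{\otimes m}$ with $m$ iid observations, for which strong detection is equivalent to $m\,H^2(\sfP_{X,Y},\sfP_X\otimes\sfP_Y)\to\infty$. By Hellinger tensorization this reduces to $md\,H^2(\sfp,\sfq)\to\infty$, and an elementary Taylor expansion (the paper's Lemma~\ref{lmm:properties of f in proof of Bernoulli case}) gives $H^2(\sfp,\sfq)=O(q)$ when $q=o(1)$ and $H^2(\sfp,\sfq)=O(|\rho|q)$ when additionally $\rho=o(1)$ with $|\rho|/q\to\infty$. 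This dispatches both density conditions in a few lines. Your Poisson-limit sketch might be made to work, but it is both vaguer and harder than necessary; the oracle reduction removes the permutation entirely and turns the ``main technical obstacle'' into a routine two-point Hellinger calculation.
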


\begin{remark}
\label{rem5}
For growing $d$, the optimal threshold is achieved by the inner-product
test~(\ref{eq:m/n sum lambda^2_k goes to infinity}) with $r=d$, which
simplifies to $\left \langle \bar{X}, \bar{Y} \right \rangle $ and is equivalent
to the test considered in \cite[Example 4]{paslev2023testing}. For fixed
$d$, previous literature did not identify the sharp threshold, which we
obtain by using the test~(\ref{eq:statistic when lambda_1 to 1}). Additionally,
Corollary~\ref{cor:Bernoulli} allows for $\alpha <1$ and $q=o(1)$, which has
not been considered in prior work. It is worth noting that the necessary
conditions $nq=\omega (1)$ for fixed $d$ and $ndq|\rho |=\omega (1)$ for
growing $d$ are derived by considering an easier instance in which the
latent injection $\pi $ is known. In this case, the problem simplifies
to testing ${\mathsf{P}}_{X,Y}$ vs
${\mathsf{P}}_{X}\otimes {\mathsf{P}}_{Y}$ based on $m$ independent observations,
for which
$m H^{2}({\mathsf{P}}_{X,Y}, {\mathsf{P}}_{X} \otimes {\mathsf{P}}_{Y})
\to \infty $ is known to be necessary for strong detection.
\end{remark}

\subsection{Power analysis and experiments}
\label{sec:power analysis}

In the preceding sections, we focus on establishing sufficient and necessary
conditions for strong detection with vanishing Type-I and Type-II errors.
In this section, we analyze the power of various tests in the special case
of fixed distribution, where $I_{\chi ^{2}}(X;Y) < \infty $ and
$\rho (X;Y) < 1$ so that the Type-I $+$ II errors are bounded away from
$0$. To this end, it suffices to describe the limiting distribution of
test statistics.

\subsubsection{Limiting distribution}
\label{sec3.4.1}

Recall that the optimal test is the likelihood ratio test
(\ref{eq:whole data set LR}), which is challenging to compute. A beautiful
result by Bai and Hsing~\cite[Theorem 2]{bai2005broken} (which originally
focused on $\alpha = 1$ but can be extended to $\alpha \in (0,1)$) shows
that under the null hypothesis ${\mathcal{H}}_{0}$, the log-likelihood
ratio $\log {\mathbf{L}}({\mathbf{X}},{\mathbf{Y}})$ converges to the following
random variable $\xi $ in distribution as $n\to \infty $ for fixed
${\mathsf{P}}_{X,Y}$, where
\begin{align}
\xi = - \frac{1}{2}\sum _{k=1}^{\infty }\left (
\frac{\sqrt{\alpha}\lambda _{k}}{1-\sqrt{\alpha}\lambda _{k}} U_{k}^{2}
-\frac{\sqrt{\alpha}\lambda _{k}}{1+\sqrt{\alpha}\lambda _{k}} V_{k}^{2}
+ \log \left ( 1-\alpha \lambda _{k}^{2}\right ) \right )
\label{eq:def_xi}
\end{align}
with the $U_{k}, V_{k}$ denoting iid.\ standard Gaussian random variables.
Notably, this limiting distribution only depends on
${\mathsf{P}}_{X,Y}$ through the singular values $\{\lambda _{k}\}$ of
the likelihood ratio kernel (\ref{eq:likelihood ratio}).

Next, we proceed to computationally feasible tests based on either spectral
or histogram embeddings. As done in ~(\ref{eq:LRT_sample_means}), we
construct a test by summarizing the two samples into the spectral embedding
$\Phi _{r}$ and $\Psi _{r}$ in (\ref{eq:eigenvector test}). Using their
asymptotic normality, instead of (\ref{eq:Tinner}), we apply the QDA
test in order to maximize the power. By the CLT, for any fixed $r$,
$\left (\Phi _{r}, \Psi _{r}\right )$ converges in distribution to
${\mathcal{N}}(\mathbf 0,\mathbf I_{2r})$ under ${\mathcal{H}}_{0}$ and
${\mathcal{N}}(\mathbf 0, \Sigma _{2r} )$ under ${\mathcal{H}}_{1}$, as
$n \to \infty $, where
$\Sigma _{2r}= \left [
\begin{smallmatrix}
\mathbf I_{r} & \sqrt{\alpha}\Lambda _{r}
\\
\sqrt{\alpha}\Lambda _{r} & \mathbf I_{r}
\end{smallmatrix}
\right ] $ with
$\Lambda _{r}=\mathsf{diag} \left \{ {\lambda _{1},\ldots ,\lambda _{r}}
\right \} $. Since the optimal likelihood ratio test for Gaussians with
distinct covariance matrices reduces to the QDA test, we apply it to
$\left (\Phi _{r}, \Psi _{r}\right )$ and get
\begin{equation}
T_{{\mathrm{eigen}}} = -\frac{1}{2}
\begin{bmatrix}
\Phi ^{\top}_{r} & \Psi ^{\top}_{r}
\end{bmatrix}
\left (\Sigma _{2r}^{-1} - \mathbf I_{2r} \right )
\begin{bmatrix}
\Phi _{r}
\\
\Psi _{r}
\end{bmatrix}
- \frac{1}{2} \log \det (\Sigma _{2r}).
\label{eq:Teigen}
\end{equation}
Since $(\Phi _{r},\Psi _{r})$ converges to
${\mathcal{N}}(\mathbf 0, \mathbf I_{2r})$ and the eigenvalues of
$\Sigma _{2r}^{-1} - \mathbf I_{2r}$ are
$\{\frac{-\sqrt{\alpha }\lambda _{k}}{1+\sqrt{\alpha }\lambda _{k}},
\frac{\sqrt{\alpha }\lambda _{k}}{1-\sqrt{\alpha }\lambda _{k}}\}_{k=1}^{r}$,
it follows that $T_{{\mathrm{eigen}}}$ converges in distribution to
$\xi _{r}$, where
\begin{equation*}
\xi _{r} = - \frac{1}{2}\sum _{k=1}^{r} \left (
\frac{\sqrt{\alpha}\lambda _{k}}{1-\sqrt{\alpha}\lambda _{k}} U_{k}^{2}
-\frac{\sqrt{\alpha}\lambda _{k}}{1+\sqrt{\alpha}\lambda _{k}} V_{k}^{2}
+ \log \left ( 1- \alpha \lambda _{k}^{2}\right ) \right ).
\end{equation*}
Interestingly, $\xi _{r}$ is exactly the first $r$ terms of the limiting
log likelihood ratio $\xi $ in~(\ref{eq:def_xi}). Therefore,
$T_{{\mathrm{eigen}}}$ achieves the optimal test power as $r \to \infty $ for
fixed ${\mathsf{P}}_{X,Y}$. Additionally, for the $d$-dimensional Gaussian
case, the likelihood ratio operator is the so-called Mehler kernel, whose
eigenfunctions are given by the Hermite polynomials (see~\prettyref{eq:mehler} In particular,
$\phi _{k}(x)= \psi _{k}(x) = x_{k}$ for $1 \le k \le d$,
$T_{{\mathrm{eigen}}}$ with $r=d$ reduces to the test in~(\ref{eq:LRT_sample_means})
based on sample means.

For the histogram test~(\ref{eq:histogram test}), recall that as
$n \to \infty $, $(s,t)$ converges in distribution to
${\mathcal{N}}(\mathbf 0, \Sigma _{0})$ and
${\mathcal{N}}(\mathbf 0, \Sigma _{1})$, under ${\mathcal{H}}_{0}$ and
${\mathcal{H}}_{1}$, where
$\Sigma _{0}=\left [
\begin{smallmatrix}
A & \mathbf 0
\\
\mathbf 0& B
\end{smallmatrix}
\right ]$ and
$\Sigma _{1}=\left [
\begin{smallmatrix}
A & \sqrt{\alpha}C
\\
\sqrt{\alpha}C^{\top }& B
\end{smallmatrix}
\right ]$ are two block matrices. Therefore, $T_{\mathrm{{hist}}}$ converges
in distribution to
\begin{equation*}
-\frac{1}{2} \sum _{k=1}^{w} \left (
\frac{\sqrt{\alpha }\mu _{k}}{1-\sqrt{\alpha }\mu _{k}} U_{k}^{2} -
\frac{\sqrt{\alpha }\mu _{k}}{1+\sqrt{\alpha }\mu _{k}}V_{k}^{2} +
\log \left ( 1-\alpha \mu _{k}^{2} \right ) \right ),
\end{equation*}
where $\{\mu _{k}\}_{k=1}^{w}$ are the singular values of matrix $C$, which
are also that of the discretized likelihood ratio kernel.

\begin{figure}[!ht]
    \centering
    \includegraphics[width=1\linewidth]
    {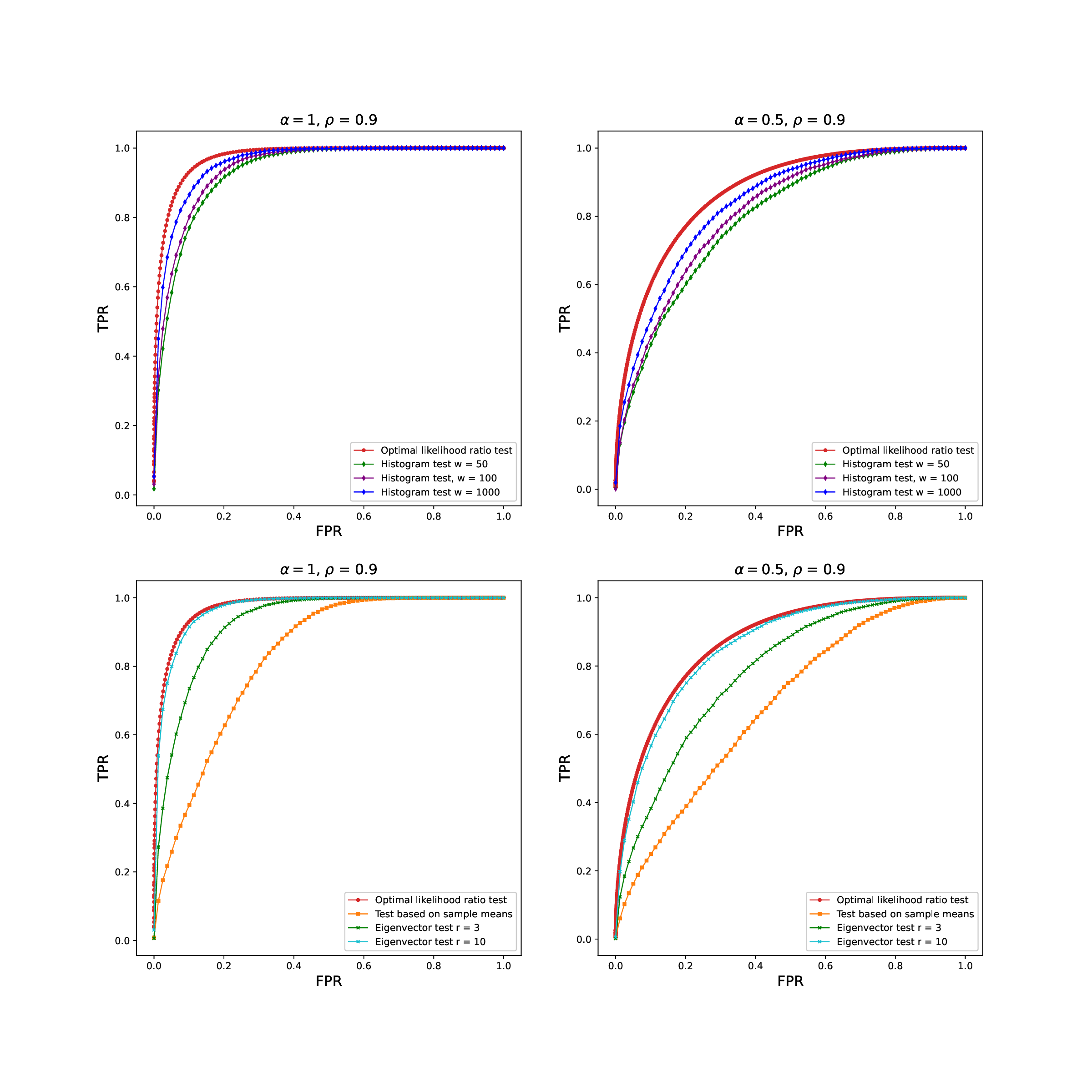}
        \caption{ROC curves of various tests for the broken bivariate Gaussian samples $(d=1)$.
    }
    \label{fig:ROC curve}

\end{figure}

\begin{figure}
    \centering
    \includegraphics[width=0.98\linewidth]{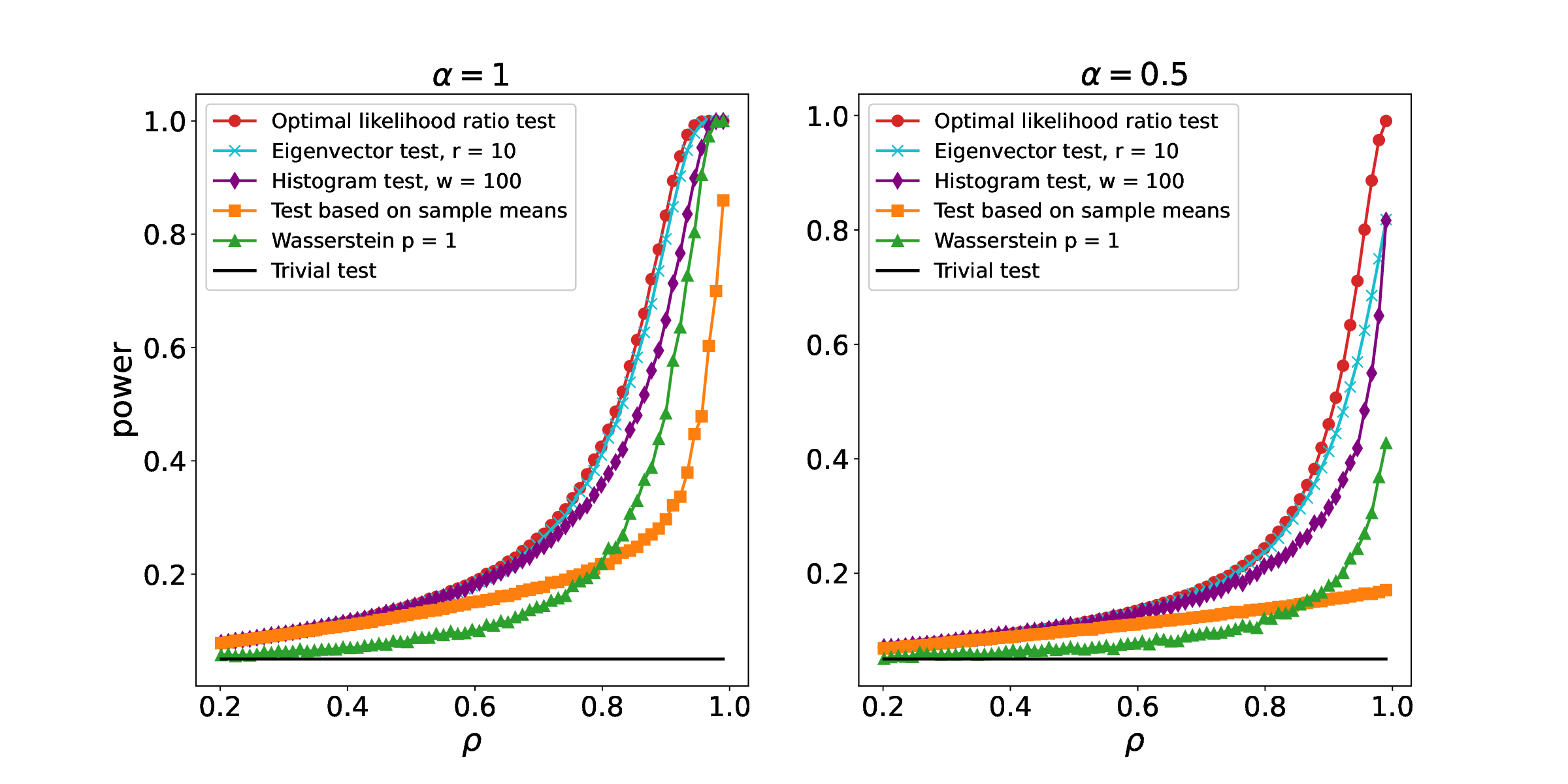}
    \caption{Power curves of various tests for the broken bivariate Gaussian samples $(d=1)$ and Type-I error  fixed at 0.05. Left: equal sample sizes $m=n$. 
    Right: unequal sample sizes $m=n/2$.}
    \label{fig:power curve}
\end{figure}

\subsubsection{Power analysis}
\label{sec3.4.2}

Next, we compare the asymptotic power ($n=\infty $) of these tests under
the Gaussian model in one dimension. The spectral embedding are obtained
by applying Hermite polynomials
$\phi _{k}=\psi _{k}= \frac{1}{\sqrt{k!}} H_{k}$ for degree up to
$r$. For histograms, we use $w$ quantile intervals for the Gaussian distribution
as the partition. Fixing $\rho =0.9$ and $\alpha = 1$ or $0.5$,
Fig.~\ref{fig:ROC curve} plots the receiver operating characteristic (ROC)
curves of the optimal likelihood ratio test, histogram tests for various
bin size $w$ (upper panel), and spectral test $T_{\mathrm{{eigen}}}$ for various
degree $r$ (lower panel). We make two observations:
\begin{itemize}
\item First, spectral embedding is much more efficient than histogram embedding,
in the sense that the power of the former converges rapidly to the optimal
curve as $r$ increases, whereas the latter requires a significantly larger
number of bins $w$ to achieve a comparable performance. This can be attributed
to exponential decay of eigenvalues in the Gaussian model (cf.~\prettyref{eq:mehler}) whereas quantization error
typically decays polynomially in the quantization level.
\item Second, unequal sample sizes are more difficult to test, as a constant
proportion of the $X$ sample (of unknown location) is independent of the
$Y$ sample, thereby further obscuring the correlation. Indeed, for both
spectral and histogram tests, smaller $\alpha $ requires a larger embedding
dimension ($r$ or $w$) to achieve a similar accuracy. For the spectral
test, the proof of Corollary~\ref{cor:Gaussian} suggests that $r$ needs to
be inversely proportional to $\alpha $ -- see~\prettyref{eq:r-choice}.
\end{itemize}

Based on these ROC curves, next we choose $w =100$ for histogram test and
$r = 10$ for spectral test and plot their power curves. For each test statistic,
we choose the threshold so that the Type-I error is fixed at 0.05 and plot
the power (one minus Type-II error) against the correlation level
$\rho \in [0.2,0.99]$. Also included are the test
(\ref{eq:LRT_sample_means}) based on sample means (which is a special
case of the spectral test for $r=1$) and the trivial test (rejecting
${\mathcal{H}}_{0}$ with probability $0.05$ independently of the data,
with a constant power of $0.05$.)

For equal sample size ($\alpha =1$, left panel of
Fig.~\ref{fig:power curve}), all tests have power approaching 1 as
$\rho \to 1$, including the test based on sample means. However, the performance
of degree-10 spectral test is much closer to optimal than that of histogram
with 100 bins. For unequal sample size ($\alpha =1/2$, right panel of
Fig.~\ref{fig:power curve}), all tests have lower power compared to the
case of $\alpha =1$. As remarked after Corollary~\ref{cor:Gaussian}, test based
solely on sample means fails to achieve power 1 even if $\rho \to 1$ and
it is necessary to incorporate higher-order eigenfunctions.

\subsubsection{Wasserstein test}
\label{sec3.4.3}

Finally, we discuss a natural test statistic based on the Wasserstein distance
for identical marginals ${\mathsf{P}}_{X}={\mathsf{P}}_{Y}$. As mentioned
in Section~\ref{sec:test and positive results}, the empirical distributions
$\hat{P}_{X}$ and $\hat{P}_{Y}$ are concentrated on the population, whose
fluctuations are independent under ${\mathcal{H}}_{0}$ and correlated under
${\mathcal{H}}_{1}$. As such, it is natural to expect that the typical
distance between $\hat{P}_{X}$ and $\hat{P}_{Y}$ tends to be smaller under
${\mathcal{H}}_{1}$ than that under ${\mathcal{H}}_{0}$. This motivates
the test which rejects ${\mathcal{H}}_{0}$ if
$W_{p}(\hat{P}_{X},\hat{P}_{Y})$, the $p$-Wasserstein distance between
these two empirical distributions, is smaller than a threshold. In the special
case of $n=m$, this amounts to solving a linear assignment problem:
\begin{equation}
W_{p}^{p}(\hat{P}_{X},\hat{P}_{Y}) = \frac{1}{n} \min _{\pi \in S_{n}}
\sum _{i=1}^{n} \| X_{\pi (i)} - Y_{i} \|^{p}.
\label{eq:wass}
\end{equation}
which, in the case of $p=2$, is the same as the MLE
(\ref{eq:MLE}). For $p=1$, by the Kantorovich-Rubinstein duality, it
can be interpreted as computing the maximum difference between the empirical
averages over all $1$-Lipschitz test functions (as opposed to explicit
test functions based on spectral or histogram embedding). The Wasserstein
test is appealing for two reasons: (a) it is \textit{universal} without
requiring knowledge of the joint distribution; (b) it scales well with
the ambient dimension $d$ and can be computed in time
$O(n^{2} d+n^{3})$ (first computing the weight matrix and solving the linear
assignment). In comparison, how the embedding dimension of histogram or
spectral embedding scales with $d$ must be chosen carefully depending on
the joint distribution. (For histogram, it will inevitably be exponential.)

The power curves for this test are also shown in~Fig.~\ref{fig:power curve}
for $n = 1000$ and $p = 1$ (The curves for $p=2$ are qualitatively similar
and omitted for brevity.) As seen in the left panel for equal sample sizes
($\alpha =1$), although for smaller $\rho $ the Wasserstein test underperforms
the sample mean test, its power eventually approaches $1$ as
$\rho \to 1$. However, its performance remains inferior to the histogram
and spectral tests, which more closely approximate the optimal likelihood
ratio test. For unequal sample sizes ($\alpha <1$), it is not hard to see
that the Wasserstein test fails to achieve strong detection\footnote{To
see this, note that under ${\mathcal{H}}_{0}$, as $n\to \infty $,
$\sqrt{n} W_{1}(\hat P_{X},\hat P_{Y})$ converges to some non-degenerate
law fully supported on ${\mathbb{R}}_{+}$ (cf.~\cite[Theorem 1.1(a1)]{del1999central}).
Under ${\mathcal{H}}_{1}$,
$\sqrt{n} W_{1}(\hat P_{X},\hat P_{Y}) = \Theta _{P}(1)$ by considering
linear test function. As such, the Wasserstein test cannot attain power
1 for unequal sample sizes.} even if $\rho = 1$ (as seen in the right panel
for $\alpha =0.5$).

\section{Proof of lower bounds}
\label{sec: proof of lower bounds}

\begin{proof}[Proof of~Theorem~\ref{thm:expression of second moment}]
\label{pf:proof_of_thm:expression_of_second_moment_VTEX1}
We first simplify the expression for
\begin{align}
\label{eq: second moment expansion}
\mathbb{E}_{0} {\mathbf{L}}^{2} & =
\frac{1}{\left | S_{m,n} \right |^{2}} \sum _{\pi ,\pi ' \in S_{m,n}}
\mathbb{E}_{0} \left [ \prod _{i=1}^{m} L(X_{\pi (i)},Y_{i}) L(X_{
\pi '(i)},Y_{i}) \right ]
\nonumber
\\
& = \frac{1}{\left | S_{m,n} \right |} \sum _{\pi \in S_{m,n}}
\mathbb{E}_{0} \left [ \prod _{i=1}^{m} L(X_{\pi (i)},Y_{i}) L(X_{i},Y_{i})
\right ],
\end{align}
where the second equality holds by symmetry and setting
$\pi '(i) =i,\forall i\in [m]$.

For a fixed $\pi $, a key observation is
\begin{align}
\mathbb{E}_{0} \left [ \prod _{i=1}^{m} L(X_{\pi (i)},Y_{i}) L(X_{i},Y_{i})
\right ]= \mathbb{E}_{0} \left [ \prod _{i \in I} L(X_{\pi (i)},Y_{i})
L(X_{i},Y_{i}) \right ],
\label{eq:two_core}
\end{align}
where $I $ is defined as the largest subset of $[m]$ such that
$\pi (I)=I$. To see why this equality holds, note that if $X_{j}$ appears
only once, say $L(X_{j}, Y_{i})$, in the product of
$\prod _{i=1}^{m} L(X_{\pi (i)},Y_{i})L(X_{i},Y_{i})$, then
$L(X_{j},Y_{i})$ can be eliminated by taking the expectation over
$X_{j}$ and using the fact that $\mathbb{E}_{X} L(X,y) = \int
\frac{{\mathrm{d}}P_{X,Y}}{{\mathrm{d}}P_{X} {\mathrm{d}}P_{Y}(y)} (x,y) {\mathrm{d}}P_{X}(x)
= \int{\mathrm{d}}P_{X \mid Y=y}(x) = 1$). Therefore, we can eliminate all
those $X_{j}$'s that appear only once. Similarly, if there exists
$Y_{i}$ appearing only once, say $L(X_{j},Y_{i})$, in the remaining product,
then $L(X_{j},Y_{i})$ can be also eliminated by taking the expectation
over $Y_{i}$ and using the fact that
$\mathbb{E}_{Y_{i}} [L(X_{j},Y_{i})] = 1$.

This iterative elimination procedure continues until no $X_{j}$ or
$Y_{i}$ can be dropped. The remaining indices are then given by
$I \subset [m]$, i.e., all $X_{i}$ and $Y_{i}$ appear exactly twice
for $i\in J$. Alternatively, one can view this iterative elimination procedure
as finding the $2$-core of a bipartite graph $G_{\pi}$ with left node set
$[m]$ and right node set $[n]$ and edge set
$\{(i,\pi (i))\}_{i=1}^{m} \cup \{(i,i)\}_{i=1}^{m}$. After successively
removing all nodes with degree at most $1$ together with its incident edge,
the remaining subgraph is the $2$-core of $G_{\pi}$ and the set of remaining
left and right nodes are both $I$. See Fig.~\ref{fig:2core} for an illustration.
The degree-$2$ constraint implies $\pi (I)=I$ and $I$ is the unique largest
subset with this property by the definition of $2$-core as the unique maximal
induced subgraph in which every vertex has degree at least $2$.
\begin{figure}[ht]
    \centering

\begin{tikzpicture}[
    scale=0.9,
    every node/.style={draw,circle,minimum size=0.7cm,inner sep=0pt}
  ]

  % Left-side vertices (L1 through L5)
  \node (L1) at (0,4) {1};
  \node (L2) at (0,3) {2};
  \node (L3) at (0,2) {3};
  \node (L4) at (0,1) {4};
  \node (L5) at (0,0) {5};
  \node (L6) at (0,-1) {6};

  % Right-side vertices (R1 through R8),
  % with the first 5 aligned horizontally with L1-L5
  \node (R1) at (3,4) {1};
  \node (R2) at (3,3) {2};
  \node (R3) at (3,2) {3};
  \node (R4) at (3,1) {4};
  \node (R5) at (3,0) {5};

  % R6, R7, R8 placed below
  \node (R6) at (3,-1) {6};
  \node (R7) at (3,-2) {7};
  \node (R8) at (3,-3) {8};

  % Dashed blue edges (thick lines) for i -> i (i=1..5)
  \draw[thick, dashed, blue] (L1) -- (R1);
  \draw[thick, dashed, blue] (L2) -- (R2);
  \draw[thick, dashed, blue] (L3) -- (R3);
  \draw[thick, dashed, blue] (L4) -- (R4);
  \draw[thick, dashed, blue] (L5) -- (R5);
  \draw[thick, dashed, blue] (L6) -- (R6);

  % Solid red edges (thick lines)
  % 1 -> 2, 2 -> 3, 3 -> 1, 4 -> 6, 5 -> 6
  \draw[thick, red] (L1) -- (R2);
  \draw[thick, red] (L2) -- (R3);
  \draw[thick, red] (L3) -- (R1);
  \draw[thick, red] (L4) -- (R7);
  \draw[thick, red] (L5) -- (R4);
  \draw[thick, red] (L6) -- (R8);
\end{tikzpicture}
    \caption{An instance of the bipartite graph $G_\pi$ for $n=8$ and $m=6$. The edges $(i,i)$ and $(i,\pi(i))$ are in blue (dashed) and red (solid), respectively. Here, the 2-core is a 6-cycle with $I=\{1,2,3\}$. After removing this 2-core, the remaining graph consists two disjoint paths of lengths 2 and 4.}
    \label{fig:2core}
\end{figure}
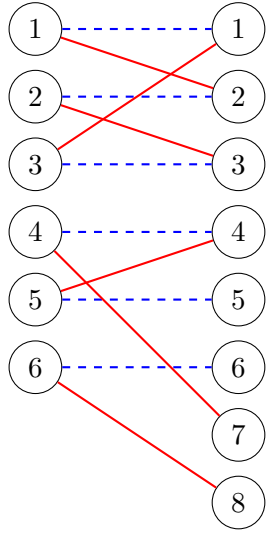

Note that $\pi $ restricted to $I$, $\pi _{|_{I}}$, is a permutation on
$I$. To further compute the RHS of~(\ref{eq:two_core}), let
${\mathcal{O}}$ denote the set of disjoint cycles of $\pi _{|_{I}}$ and
$N_{i}$ denote the number of cycles of length $i$. (Note that the 2-core
of $G_{\pi}$ must be a disjoint union of cycles; there are exactly
$N_{i}$ cycles of length $i$.) Recall that in~(\ref{eq:H-S operator}),
${\mathcal{L}}$ is defined as an integral operator from
$L_{2}({\mathsf{P}}_{Y})$ to $L_{2}({\mathsf{P}}_{X})$ with the kernel
$L(x,y)$. The adjoint operator ${\mathcal{L}}^{*}$ satisfies
$({\mathcal{L}}^{*} f)(y) = \int L(x,y)f(x) {\mathrm{d}}{\mathsf{P}}_{X}(x)$
for $f\in L_{2}({\mathsf{P}}_{X})$. Consequently, the composition
${\mathcal{T}}\coloneqq{\mathcal{L}}{\mathcal{L}}^{*}$ is a self-adjoint
operator on $L_{2}({\mathsf{P}}_{X})$ with the kernel given by
$T(x,x') = \int _{{\mathcal{Y}}}L(x,y)L(x',y) {\mathrm{d}}{\mathsf{P}}_{Y}(y)
= \mathbb{E}_{Y}L(x,Y)L(x',Y)$. With these notations, we have
\begin{align}
\label{eq: computation over I}
\mathbb{E}_{0} \left [ \prod _{i\in I}L(X_{\pi (i)},Y_{i}) L(X_{i},Y_{i})
\right ] & \overset{(a)}= \prod _{O\in {\mathcal{O}}} \mathbb{E}_{0}
\left [ \prod _{i\in O}L(X_{\pi (i)},Y_{i}) L(X_{i},Y_{i}) \right ]
\nonumber
\\
& \overset{(b)}= \prod _{O\in {\mathcal{O}}} \mathbb{E}_{X} \left [
\prod _{i\in O} \mathbb{E}_{Y} \left ( L(X_{\pi (i)},Y_{i}) L(X_{i},Y_{i})
\right ) \right ]
\nonumber
\\
& \overset{(c)}= \prod _{O\in {\mathcal{O}}}\mathbb{E}_{X} \left [
\prod _{i\in O} T(X_{\pi (i)}, X_{i}) \right ]
\nonumber
\\
& \overset{(d)}= \prod _{O\in {\mathcal{O}}} \mathsf{Tr}({\mathcal{T}}^{|O|})
\overset{(e)}= \prod _{O\in {\mathcal{O}}}\left ( \sum _{k=0}^{
\infty }\lambda _{k}^{2\left | O \right |} \right )= \prod _{i=1}^{
\left | I \right |} \left ( \sum _{k=0}^{\infty }\lambda _{k}^{2i}
\right )^{N_{i}},
\end{align}
where $(a)$ holds due to the independence among
$\{X_{i},Y_{i}\}_{i\in O}$ across disjoint orbits
$O\in{\mathcal{O}}$; $(b)$ holds by conditioning on $X$ and first taking
expectation over $Y$; $(c)$ follows from the definition of kernel
$T(x,x')$; $(d)$ utilizes the definition of trace of operator; and
$(e)$ holds because $\{\lambda _{k}^{2}\}_{k=0}^{\infty }$ are the eigenvalues
of ${\mathcal{T}}$.

Combining~(\ref{eq: second moment expansion}), (\ref{eq:two_core})
and~(\ref{eq: computation over I}) gives that
\begin{align}
\mathbb{E}_{0} {\mathbf{L}}^{2} = \frac{1}{\left | S_{m,n} \right |}
\sum _{\pi \in S_{m,n}} \prod _{i=1}^{\left | I \right |} \left (
\sum _{k=0}^{\infty }\lambda _{k}^{2i} \right )^{N_{i}}=
\frac{1}{\left | S_{m,n} \right |} \sum _{I \subset [m]} \sum _{
\sigma \in S_{I}} M(I,\sigma ) \prod _{i=1}^{\left | I \right |}
\left ( \sum _{k=0}^{\infty }\lambda _{k}^{2i} \right )^{N_{i}},
\label{eq:second_moment_cycle_decomp}
\end{align}
where $M(I,\sigma )$ denotes the number of permutations $\pi $ whose corresponding
bipartite graph $G_{\pi}$ has $2$-core given by $I$ and
$\pi _{|_{I}} = \sigma $.

To determine $M(I,\sigma )$, recall the bipartite graph formulation. Here
the edges in $\{(i,i)\}_{i\in [m]} \cup \{(i,\pi (i)) \}_{i \in [I]}$ have
already been fixed, and $I$ is the left/right node set of the $2$-core.
Therefore, to specify $\pi $, it is equivalent to adding
$m - \left | I \right |$ edges between $[m]\setminus I$ and
$[n] \setminus I$. Additionally, for $I$ being the $2$-core, after adding
those $m-\left | I \right |$ edges, the induced subgraph of
$G_{\pi}$ on $([m]\setminus I) \times ([n]\setminus I)$ must be a vertex-disjoint
collection of paths (cf.~Fig.~\ref{fig:2core}). We claim that the number
of possible ways to add those $m-\left | I \right |$ edges is
\begin{equation}
M(I,\sigma ) =
\begin{cases}
(n-\left | I \right | - 1) (n-\left | I \right | -2) \cdots (n-m),&
\left | I \right | < m
\\
1,&\left | I \right | = m,
\end{cases}
\label{eq:MIsigma}
\end{equation}
which only depends on the size of $I$; hence in the following, we denote
it by $M_{\left | I \right |}$. To justify this claim, we add edges one
by one according to an arbitrary ordering of the left vertices
$m\backslash I$. Suppose we have added $t$ edges incident to left vertices
$v_{1},\ldots ,v_{t}$. The next left vertex $v_{t+1}$ must be paired with
either a remaining degree-0 right vertex in $[n]\backslash [m]$ or one
of the remaining degree-1 right vertex in $[m]\backslash I$ that is
\textit{not} in the same path as $v_{t+1}$. The total number of such right
vertices is $n-|I|-t-1$. From here the claim (\ref{eq:MIsigma}) follows.

Finally, by classifying $I$ according to its size, we obtain that
\begin{align*}
\mathbb{E}_{0} {\mathbf{L}}^{2} & =
\frac{1}{\left | S_{m,n} \right |} \sum _{I \in [m]} \sum _{\sigma
\in S_{I}} M_{\left | I \right |} \prod _{i=1}^{\left | I \right |}
\left ( \sum _{k=0}^{\infty }\lambda _{k}^{2i} \right )^{N_{i}}
\\
& = \frac{1}{\left | S_{m,n} \right |} \sum _{\ell = 0}^{m}
\binom{m}{\ell} M_{\ell }\sum _{\sigma \in S_{\ell}}\prod _{i=1}^{
\left | I \right |} \left ( \sum _{k=0}^{\infty }\lambda _{k}^{2i}
\right )^{N_{i}}
\\
& \overset{(c)} = \frac{1}{\left | S_{m,n} \right |} \sum _{\ell = 0}^{m}
\binom{m}{\ell} \ell ! M_{\ell }a_{\ell} = \sum _{\ell = 0}^{m} t_{
\ell}a_{\ell},
\end{align*}
where $(c)$ follows from~Lemma~\ref{lmm:cycle index} and
$t_{\ell} = \frac{\binom{m}{\ell}\ell ! M_{\ell}}{\binom{n}{m}m!} $. Observe
that
\begin{equation*}
\sum _{\ell =0}^{m} t_{\ell} =
\frac{\sum _{\ell =0}^{m}\binom{n-\ell -1}{m-\ell}}{\binom{n}{m}} =1,
\end{equation*}
since the numerator counts the subsets of $[n]$ of size $m$ based on the
largest element $\ell $ for which the subset contains all elements in
$[\ell ]$.
\end{proof}

\begin{proof}[Proof of~Corollary~\ref{cor:lower bound general}]
Note that
\begin{equation*}
t_{\ell} =
\frac{m(m-1)\cdots (m-\ell + 1) (n-m)}{n(n-1)\cdots (n-\ell )} \leq
\left ( \frac{m}{n} \right )^{\ell }\frac{n-m}{n-\ell},
\text{ if $m < n$},
\end{equation*}
and when $m = n$, we have $t_{n} = 1$ and
$t_{\ell }= 0, \forall \ell < n$.

One can then obtain a upper bound for
$\mathbb{E}_{0} {\mathbf{L}}^{2}$:
\begin{equation*}
\mathbb{E}_{0} {\mathbf{L}}^{2} \leq \sum _{\ell = 0}^{m} \left (
\frac{m}{n} \right )^{\ell }\frac{n-m}{n-\ell} a_{\ell }\leq \sum _{
\ell = 0}^{\infty }\left ( \frac{m}{n} \right )^{\ell }a_{\ell }=
\prod _{k=0}^{\infty }\frac{1}{1 - (m/n) \lambda ^{2}_{k}},
\end{equation*}
where the second inequality follows from the nonnegativity of
$a_{\ell }\geq 0$ and the last equality applies the definition of
$a_{\ell}$.

Now, consider the following two cases separately:
\begin{itemize}
\item $m/n < 1-c_{1}$: We have
\begin{equation*}
\prod _{k=0}^{\infty }\frac{1}{1-m/n\lambda _{k}^{2}} = \exp \left (
\sum _{k=0}^{\infty }-\ln \left ( 1-\frac{m}{n}\lambda _{k}^{2}
\right ) \right ) \leq \exp \left ( \sum _{k=0}^{\infty }
\frac{\frac{m}{n}\lambda ^{2}_{k}}{1-\frac{m}{n}\lambda _{k}^{2}}
\right ) \leq \exp \left ( \frac{m}{c_{1}n} \sum _{k=0}^{\infty }
\lambda _{k}^{2} \right ),
\end{equation*}
which is $O(1)$ due to the assumption $\frac{m}{n}\sum _{k=1}^{
\infty }\lambda ^{2}_{k} \leq C_{1}$.
\item When $m=(1-o(1)) n$: Assume $\lambda _{1} < 1- c$ and
$\sum _{k=1}^{\infty }\lambda ^{2}_{k} <C$. If $m = n$, we have
$\sum _{\ell = 0}^{m} t_{\ell }a_{\ell }= a_{n}$ and the desired result
follows from~Lemma~\ref{lmm:limit of a_n}. In what follows, suppose
$m < n$.

We have
\begin{align*}
\left | \sum _{\ell =0}^{m} t_{\ell } a_{\ell }- \prod _{k=1}^{
\infty }(1-\lambda ^{2}_{k})^{-1} \right | & = \left | \sum _{\ell =0}^{m}
t_{\ell } \left ( a_{\ell }- \prod _{k=1}^{\infty }(1-\lambda ^{2}_{k})^{-1}
\right ) \right |\leq \sum _{\ell =0}^{m}C' t_{\ell} r^{-\ell}
\\
& \leq C'\sum _{\ell =0}^{m} \frac{n-m}{n-\ell} r^{-\ell} =o(1),
\end{align*}
where the first equality holds due to
$\sum _{\ell =0}^{m} t_{\ell} = 1$; the first inequality applies~Lemma~\ref{lmm:limit of a_n},
where $r>1$ and $C'>0$ are constants only depending on $c, C$; the second
inequality holds due to $t_{\ell }\le (n-m)/(n-\ell )$; the last inequality
follows because
$\sum _{\ell =0}^{m} \frac{n-m}{n-\ell} r^{-\ell} \leq
\frac{n-m}{n-m/2} \sum _{\ell \leq m/2} r^{-\ell} + \sum _{\ell \geq m/2}
r^{-\ell} \to 0$, using $n-m=o(n)$ and $m\to \infty $.
\end{itemize}
\end{proof}

\section{Proof of upper bounds}
\label{sec:pf-upper}

\begin{proof}[Proof of~Theorem~\ref{thm:upper bound general eigen 1}]
\label{pf:_proof_of_thm:upper_bound_general_eigen_1_VTEX1}
Consider the test statistic~(\ref{eq:statistic when lambda_1 to 1}),
namely
\begin{equation*}
T_{\mathrm{{top}}}= \left ( A-B \right )^{2}, \quad A\equiv
\frac{1}{\sqrt{n}} \sum _{i=1}^{n} \phi _{1}(X_{i}), \; B\equiv
\frac{1}{\sqrt{m}} \sum _{i=1}^{m} \psi _{1}(Y_{i}) .
\end{equation*}
Under ${\mathcal{H}}_{0}$, $X_{1},\cdots ,X_{n},Y_{1}\cdots ,Y_{m}$ are
all independent, so
\begin{align*}
\mathbb{P}_{0}\left \{ T_{\mathrm{{top}}}\leq \tau \right \} = \mathbb{E}_{B}[
\mathbb{P}\left \{ A \in [B-\sqrt{\tau },B+\sqrt{\tau }] \right \} ].
\end{align*}
It remains to argue that $A$ is anti-concentrated. Since
$\mathbb{E}_{{\mathsf{P}}_{X}} [\phi ^{2}_{1}(X)]=1$, by the Berry-Esseen
inequality, we have for some absolute constant $C$,
\begin{equation*}
\sup _{x\in {\mathbb{R}}} | \mathbb{P}\left \{ A \leq x \right \} -
\Phi (x)| \leq
\frac{C \mathbb{E}_{{\mathsf{P}}_{X}} |\phi ^{3}_{1}(X)|}{\sqrt{n}},
\end{equation*}
where $\Phi $ is the standard normal CDF. So
$\mathbb{P}_{0}\left \{ T_{\mathrm{{top}}}\leq \tau \right \} \leq C'(\sqrt{
\tau} +
\frac{ \mathbb{E}_{{\mathsf{P}}_{X}} |\phi ^{3}_{1}(X)|}{\sqrt{n}})$ for
some absolute constant $C'$.

Under ${\mathcal{H}}_{1}$, by symmetry, we can assume $\pi $ to be identity
and then
\begin{align*}
\mathbb{E}_{1} \left [ T_{\mathrm{{top}}} \right ] &= \mathbb{E}_{1} (A-B)^{2}
\\
& = \frac{1}{n} \mathbb{E}_{1} \left [ \sum _{i=1}^{n} \phi _{1}(X_{i})
\right ]^{2} + \frac{1}{m} \mathbb{E}_{1} \left [ \sum _{i=1}^{m}
\psi _{1}(Y_{i}) \right ]^{2} -\frac{2}{\sqrt{nm}} \sum _{i=1}^{n}
\sum _{j=1}^{m} \mathbb{E}_{1}\left [ \phi _{1}(X_{i})\psi _{1}(Y_{j})
\right ]
\\
& = 2 - \frac{2}{\sqrt{nm}}\sum _{i=1}^{m} \mathbb{E}_{{\mathsf{P}}_{X,Y}}
\left [ \phi _{1}(X) \psi _{1}(Y) \right ]= 2 - 2\sqrt{\frac{m}{n}}
\lambda _{1} \equiv 2\epsilon .
\end{align*}

By Markov inequality,
$\mathbb{P}_{1}\left \{ T_{\mathrm{{top}}}\geq \tau \right \} \leq
\frac{2\epsilon}{\tau}$. Choosing $\tau =\sqrt{\epsilon}$, we have
$\mathbb{P}_{1}\left \{ T_{\mathrm{{top}}}\geq \tau \right \} + \mathbb{P}_{0}
\left \{ T_{\mathrm{{top}}}\leq \tau \right \} \to 0$ provided that
$\epsilon \to 0$ and
$\mathbb{E}_{{\mathsf{P}}_{X}} |\phi ^{3}_{1}(X)| = o(\sqrt{n})$.

Under condition
$\mathbb{E}_{{\mathsf{P}}_{Y}} |\psi ^{3}_{1}(Y)| = o(\sqrt{m})$, similar
argument applies.
\end{proof}

\begin{proof}[Proof of~Theorem~\ref{thm:upper bound general eigen r}]
\label{pf:_proof_of_thm:upper_bound_general_eigen_r_VTEX1}
Consider the test statistic
(\ref{eq:m/n sum lambda^2_k goes to infinity}), namely
\begin{equation*}
T_{\mathrm{{inner}}}= \frac{1}{\sqrt{mn}} \sum _{i=1}^{n}\sum _{j=1}^{m} L_{r}(X_{i},Y_{j})
= \frac{1}{\sqrt{mn}} \sum _{i=1}^{n}\sum _{j=1}^{m} \sum _{k=1}^{r}
\lambda _{k} \phi _{k}(X_{i})\psi _{k}(Y_{j}),
\end{equation*}
and the test
\begin{equation*}
\eta _{r} =
\begin{cases}
1,& T_{\mathrm{{inner}}}>
\frac{\sqrt{\frac{m}{n}}\sum _{k=1}^{r} \lambda _{k}^{2}}{2};
\\
0,& \text{otherwise};
\end{cases}
.
\end{equation*}
Suppose that for a given choice of $r$ (which may depend on $n$), we have
\begin{equation*}
\lim _{n\to \infty} I_{\chi ^{2}}^{(r)}(X;Y) = \infty ,\quad
\mathbb{E}_{{\mathsf{P}}_{X,Y}} L^{2}_{r}(X,Y) = o\left ( n (I_{\chi ^{2}}^{(r)}(X;Y))^{2}
\right ).
\end{equation*}

We will first compute
$\mathbb{E}_{0} \left [ T_{\mathrm{{inner}}} \right ]$,
$\mathbb{E}_{1} \left [ T_{\mathrm{{inner}}} \right ]$,
$\mathsf{var}_{0} \left [ T_{\mathrm{{inner}}} \right ]$ and
$\mathsf{var}_{1}\left [ T_{\mathrm{{inner}}} \right ]$ and then use Chebyshev's
inequality to show that strong detection is achievable. First, by the orthonormality
of $\{\phi _{k}\}_{k\geq 0}$,
\begin{equation*}
\mathbb{E}_{0} \left [ T_{\mathrm{{inner}}} \right ] = \frac{1}{\sqrt{mn}}
\sum _{i=1}^{n}\sum _{j=1}^{m} \sum _{k=1}^{r} \lambda _{k}
\mathbb{E}_{0}[\phi _{k}(X_{i})\psi _{k}(Y_{j})] = 0.
\end{equation*}

Moreover, by symmetry, we may set $\pi $ to be identity and then,
\begin{equation*}
\mathbb{E}_{1} \left [ T_{\mathrm{{inner}}} \right ] = \frac{1}{\sqrt{mn}}
\sum _{i=1}^{m}\sum _{j=1}^{m} \sum _{k=1}^{r} \lambda _{k}
\mathbb{E}_{1}\phi _{k}(X_{i})\psi _{k}(Y_{j}) = \frac{1}{\sqrt{mn}}
\sum _{i=1}^{m}\sum _{k=1}^{r} \lambda _{k} \mathbb{E}_{1}\phi _{k}(X_{i})
\psi _{k}(Y_{i}) = \sqrt{\frac{m}{n}}\sum _{k=1}^{r} \lambda ^{2}_{k},
\end{equation*}
where the second equality holds because when $i \neq j$,
$(X_{i}, Y_{j})$ is distributed as
${\mathsf{P}}_{X} \otimes {\mathsf{P}}_{Y}$ and
$\mathbb{E}_{{\mathsf{P}}_{X}} \left [ \phi _{k}(X_{i}) \right ]=
\mathbb{E}_{{\mathsf{P}}_{Y}}\left [ \psi _{k}(Y_{j}) \right ] = 0$; the
last equality follows because $(X_{i},Y_{i})$ is distributed as
${\mathsf{P}}_{X,Y}$ so that
$\mathbb{E}_{1}\left [ \phi _{k}(X_{i})\psi _{k}(Y_{i}) \right ]=
\lambda _{k}$.

For $\mathsf{var}_{0} \left [ T_{\mathrm{{inner}}} \right ]$, we have
\begin{align*}
\mathsf{var}_{0} \left [ T_{\mathrm{{inner}}} \right ] & = \mathbb{E}_{0}
\left [ T_{\mathrm{{inner}}}^{2} \right ] = \frac{1}{nm} \sum _{i,i'=1}^{n}
\sum _{j,j'=1}^{m} \mathbb{E}_{0} \left [ L_{r}(X_{i},Y_{j})L_{r}(X_{i'},Y_{j'})
\right ]
\\
& =\frac{1}{nm}\sum _{i,i' = 1}^{n}\sum _{j,j'=1}^{m} \mathbb{E}_{0}
\left [ \sum _{k=1}^{r}\lambda _{k} \phi _{k}(X_{i})\psi _{k}(Y_{j})
\sum _{k'=1}^{r}\lambda _{k'} \phi _{k'}(X_{i'}) \psi _{k'}(Y_{j'})
\right ]
\\
& = \frac{1}{nm} \sum _{k,k'=1}^{r}\lambda _{k} \lambda _{k'} \sum _{i,i'=1}^{n}
\sum _{j,j'=1}^{m} \mathbb{E}\left [ \phi _{k}(X_{i})\phi _{k'}(X_{i'})
\right ] \mathbb{E}\left [ \psi _{k}(Y_{j}) \psi _{k'}(Y_{j'})
\right ]
\\
& = \frac{1}{nm} \sum _{k,k'=1}^{r}\lambda _{k} \lambda _{k'} \sum _{i=1}^{n}
\sum _{j=1}^{m} \delta _{k,k'} = \sum _{k=1}^{r}\lambda _{k}^{2}.
\end{align*}

For
$\mathsf{var}_{1} \left [ T_{\mathrm{{inner}}} \right ] = \mathbb{E}_{1}
\left [ T_{\mathrm{{inner}}}^{2} \right ] - (\mathbb{E}_{1} \left [ T_{\mathrm{{inner}}}
\right ])^{2}$, we only need to calculate
$\mathbb{E}_{1} \left [ T_{\mathrm{{inner}}}^{2} \right ]$, which by symmetry
is
\begin{equation*}
\mathbb{E}_{1} \left [ T_{\mathrm{{inner}}}^{2} \right ] = \frac{1}{nm}
\sum _{i,i' = 1}^{n} \sum _{j,j'=1}^{m} \mathbb{E}_{1|{\mathrm{Id}}}\left [
L_{r}(X_{i},Y_{j})L_{r}(X_{i'},Y_{j'}) \right ].
\end{equation*}
The summands can be divided into five cases:

\begin{itemize}
\item $i=j=i'=j'$: We have
\begin{equation*}
\mathbb{E}_{1|\mathrm{Id}} \left [ L^{2}_{r}(X_{i},Y_{i}) \right ] =
\mathbb{E}_{{\mathsf{P}}_{X,Y}} \left [ L^{2}_{r}(X,Y) \right ].
\end{equation*}
\item $i=i',j=j'$ but $i\neq j$:
\begin{equation*}
\mathbb{E}_{1|\mathrm{Id}} \left [ L_{r}(X_{i},Y_{j})L_{r}(X_{i},Y_{j})
\right ] = \mathbb{E}_{{\mathsf{P}}_{X}\otimes {\mathsf{P}}_{Y}}
\left [ L^{2}_{r}(X_{i},Y_{j}) \right ] = \sum _{k=1}^{r} \lambda _{k}^{2}.
\end{equation*}
\item $i=j,i'=j'$ but $i\neq i'$:
\begin{align*}
\mathbb{E}_{1|\mathrm{Id}} \left [ L_{r}(X_{i},Y_{i})L_{r}(X_{i'},Y_{i'})
\right ] & = \left ( \mathbb{E}_{{\mathsf{P}}_{X,Y}} \left [ L_{r}(X,Y)
\right ] \right )^{2}
\\
&= \left ( \sum _{k=1}^{r} \lambda _{k}\mathbb{E}_{{\mathsf{P}}_{X,Y}}
\left [ \phi _{k}(X)\psi _{k}(Y) \right ] \right )^{2}=\left ( \sum _{k=1}^{r}
\lambda _{k}^{2} \right )^{2}.
\end{align*}
\item $i=j',i'=j$ but $i \neq i'$:
\begin{align*}
\mathbb{E}_{1|\mathrm{Id}} \left [ L_{r}(X_{i},Y_{j}) L_{r}(X_{j},Y_{i})
\right ] &= \sum _{k,k'=1}^{r} \lambda _{k} \lambda _{k'}\mathbb{E}_{{
\mathsf{P}}_{X,Y}} \left [ \phi _{k}(X_{i}) \psi _{k'}(Y_{i}) \right ]
\mathbb{E}_{{\mathsf{P}}_{X,Y}} \left [ \phi _{k'}(X_{j})\psi _{k}(Y_{j})
\right ]
\\
&= \sum _{k=1}^{r} \lambda _{k}^{4}.
\end{align*}
\item In the remaining case, there is at least one isolated index, hence
the corresponding summand must be $0$.
\end{itemize}
We conclude that
\begin{equation*}
\mathbb{E}_{1} \left [ T_{\mathrm{{inner}}}^{2} \right ] = \frac{1}{n}
\mathbb{E}_{{\mathsf{P}}_{X,Y}} \left [ L^{2}_{r}(X,Y) \right ] +
\frac{n-1}{n} \left ( \sum _{k=1}^{r} \lambda _{k}^{2} \right ) +
\frac{m-1}{n} \left ( \sum _{k=1}^{r} \lambda _{k}^{2} \right )^{2} +
\frac{m-1}{n} \left ( \sum _{k=1}^{r} \lambda _{k}^{4} \right ).
\end{equation*}
Therefore,
\begin{align*}
\mathsf{var}_{1} \left [ T_{\mathrm{{inner}}} \right ] & = \frac{1}{n}
\mathbb{E}_{{\mathsf{P}}_{X,Y}} \left [ L^{2}_{r}(X,Y) \right ]+
\frac{n-1}{n} \left ( \sum _{k=1}^{r} \lambda _{k}^{2} \right )-
\frac{1}{n} \left ( \sum _{k=1}^{r} \lambda _{k}^{2} \right )^{2} +
\frac{m-1}{n} \left ( \sum _{k=1}^{r} \lambda _{k}^{4} \right )
\\
& \le \frac{1}{n}\mathbb{E}_{{\mathsf{P}}_{X,Y}} \left [ L^{2}_{r}(X,Y)
\right ] + 2 \sum _{k=1}^{r} \lambda _{k}^{2},
\end{align*}
where the last inequality holds due to
$\lambda _{k}^{4} \leq \lambda _{k}^{2}$ for all $k\geq 1$ and
$m\leq n$.

Then the testing error of $\eta _{r}$ could be bounded from above by
\begin{align*}
&\mathbb{P}_{0} \left \{ T_{\mathrm{{inner}}}>
\frac{\sqrt{\frac{m}{n}}\sum _{k=1}^{r} \lambda _{k}^{2}}{2} \right
\} + \mathbb{P}_{1} \left \{ T_{\mathrm{{inner}}}\leq
\frac{\sqrt{\frac{m}{n}}\sum _{k=1}^{r} \lambda _{k}^{2}}{2} \right
\}
\\
& \leq
\frac{4\mathsf{var}_{0} \left [ T_{\mathrm{{inner}}} \right ]}{\frac{m}{n}(I_{\chi ^{2}}^{(r)}(X;Y))^{2}}
+
\frac{4\mathsf{var}_{1} \left [ T_{\mathrm{{inner}}} \right ]}{\frac{m}{n}(I_{\chi ^{2}}^{(r)}(X;Y))^{2}}
\\
&\leq \frac{4}{\frac{m}{n}I_{\chi ^{2}}^{(r)}(X;Y)} +
\frac{4\mathbb{E}_{{\mathsf{P}}_{X,Y}} \left [ L^{2}_{r}(X,Y) \right ]}{m\left ( I_{\chi ^{2}}^{(r)}(X;Y) \right )^{2}}+
\frac{8}{\frac{m}{n}I_{\chi ^{2}}^{(r)}(X;Y)}\to 0,
\end{align*}
where the first inequality follows from Chebyshev's inequality and
$I_{\chi ^{2}}^{(r)}(X;Y) = \sum _{k=1}^{r} \lambda _{k}^{2}$.
\end{proof}

\begin{proof}[Proof of~Theorem~\ref{thm:fixed P Q}]
\label{pf:_proof_of_thm:fixed_P_Q_VTEX1}
The necessity part directly follows from~Corollary~\ref{cor:lower bound general}.

For sufficiency, we split the proof into two parts. First, assuming
$I_{\chi ^{2}}(X;Y) = \infty $, we show that strong detection is achievable
for any $0<\alpha \leq 1$. Since the $\chi ^{2}$-information is infinite,
the likelihood ratio operator is not necessarily Hilbert-Schmidt and may
not admit an eigen-decomposition as in~(\ref{eq:SVD of L}). To overcome
this, we discretize the space using finite partitions and construct histogram
statistics. By the Central Limit Theorem, these statistics are approximately
Gaussian. Since the histogram counts are linearly dependent (summing to
fixed values), leading to degenerate covariance matrices, we project the
statistics onto the space spanned by eigenvectors of covariance matrix
corresponding to non-zero eigenvalues. This projection ensures the covariance
matrices are invertible (under $\mathcal{H}_{0}$), allowing us to apply
the optimal test (QDA) for Gaussian distributions. Finally, by choosing
a sufficiently fine partition, the $\chi ^{2}$-divergence of the discretized
distributions can be made arbitrarily large, thereby driving the testing
error to zero.

For any fixed $K > 0$, by~Lemma~\ref{lmm:property of f-divergence on product space},
there exists a partition
${\mathcal{I}}= \{I_{1},I_{2},\cdots ,I_{w}\}$ on ${\mathcal{X}}$ and
${\mathcal{J}}= \{J_{1},J_{2},\cdots ,J_{w}\}$ on ${\mathcal{Y}}$ such
that the induced partition
${\mathcal{E}}= \{I_{k}\times J_{l}\}_{k,l = 1}^{w}$ on
${\mathcal{X}}\times {\mathcal{Y}}$ satisfies the following: if we define
the discretized joint distributions
${\mathsf{P}}_{{\mathcal{E}}}(k,l) = {\mathsf{P}}_{X,Y}(I_{k}\times J_{l})$
and
${\mathsf{Q}}_{{\mathcal{E}}}(k,l) = {\mathsf{P}}_{X}(I_{k}) {
\mathsf{P}}_{Y}(J_{l})$, then
$\chi ^{2}({\mathsf{P}}_{{\mathcal{E}}}\|{\mathsf{Q}}_{{\mathcal{E}}})
> K$.

Recall the definition of the standardized histograms $s$ and $t$ from
(\ref{eq:histogram vectors}), namely
\begin{equation*}
s_{k} = \frac{1}{\sqrt{n}} \sum _{i=1}^{n}
\frac{{\mathbf{1}_{\left \{{X_{i} \in I_{k}}\right \}}} - r_{k}}{ \sqrt{r_{k}}},
\quad t_{l}= \frac{1}{\sqrt{m}} \sum _{i=1}^{m}
\frac{{\mathbf{1}_{\left \{{Y_{i} \in J_{l}}\right \}}} - z_{l}}{\sqrt{z_{l}}},
\end{equation*}
where $r_{k} = {\mathsf{P}}_{X}(I_{k})$ for $k\in [w]$ and
$z_{l} = {\mathsf{P}}_{Y}(J_{l})$ for $l\in [w]$. Under the two hypotheses,
the covariance matrices of $(s^{\top},t^{\top})^{\top}$ can be computed
as follows:
\begin{equation*}
\Sigma _{0} =\mathbb{E}_{0}
\begin{bmatrix}
ss^{\top }& {st^{\top }}
\\
{ts^{\top }}& tt^{\top }
\end{bmatrix}
=
\begin{bmatrix}
A&\mathbf 0
\\
\mathbf 0&B
\end{bmatrix}
, \quad \Sigma _{1} = \mathbb{E}_{1}
\begin{bmatrix}
ss^{\top }& {st^{\top }}
\\
{ts^{\top }}& tt^{\top }
\end{bmatrix}
=
\begin{bmatrix}
A& \sqrt{\alpha} C
\\
\sqrt{\alpha} C^{\top}&B
\end{bmatrix}
,
\end{equation*}
where
$A = \mathbf I_{w} - \gamma \gamma ^{\top},B = \mathbf I_{w} - \beta
\beta ^{\top},C ={\widetilde{P}}-\gamma \beta ^{\top}$. Here
${\widetilde{P}}_{k,l} \equiv {\mathsf{P}}_{X,Y}( I_{k}\times J_{l})/
\sqrt{r_{k}z_{l}}$ and
$\gamma = (\sqrt{r_{1}},\sqrt{r_{2}},\cdots ,\sqrt{r_{w}})^{\top}$ and
$\beta = (\sqrt{z_{1}},\sqrt{z_{2}},\cdots ,\sqrt{z_{w}})^{\top}$ are both
unit vectors. To see this, note that,
\begin{align*}
A_{kl} & = \mathbb{E}_{0} s_{k} s_{l} = \frac{1}{n} \sum _{i=1}^{n}
\sum _{j=1}^{n} \mathbb{E}_{0} \left (
\frac{{\mathbf{1}_{\left \{{X_{i}\in I_{k}}\right \}}} - r_{k}} {\sqrt{r_{k}}}
\right ) \left (
\frac{{\mathbf{1}_{\left \{{X_{j} \in I_{\ell }}\right \}}} - r_{\ell }}{\sqrt{r_{\ell }}}
\right )
\\
&= \frac{1}{n} \sum _{i=1}^{n} \mathbb{E}_{0} \left (
\frac{{\mathbf{1}_{\left \{{X_{i}\in I_{k}}\right \}}} - r_{k}} {\sqrt{r_{k}}}
\right ) \left (
\frac{{\mathbf{1}_{\left \{{X_{i} \in I_{\ell }}\right \}}} - r_{\ell }}{\sqrt{r_{\ell }}}
\right )=
\begin{cases}
1-r_{k},&k = \ell
\\
-\sqrt{r_{k}r_{\ell }},&k \neq \ell
\end{cases},
\end{align*}
where the penultimate equality utilizes that $X_{i}$ and $X_{j}$ are independent
when $i\neq j$. The expressions for $B$ and $C$ can be computed in a similar
manner.

Note that both $\Sigma _{0}$ and $\Sigma _{1}$ have zero eigenvalues because
the entries of $s$ and $t$ are linearly dependent. To resolve this rank
deficiency, we first project $(s,t)$ to the space spanned by eigenvectors
of $\Sigma _{0}$ corresponding to non-zero eigenvalues.

Importantly, the left (resp.~right) singular vectors of the off-diagonal
block $C$ coincide with those of the diagonal block $A$ (resp.~$B$). Indeed,
we have
$\gamma ^{\top }A = \gamma ^{\top }- \gamma ^{\top }\gamma \gamma ^{
\top }= \mathbf 0$ and
$\gamma ^{\top }C = \gamma ^{\top }{\widetilde{P}}- \beta ^{\top }=
\mathbf 0$ since
\begin{equation*}
\sum _{j=1}^{w} \sqrt{r_{j}}
\frac{{\mathsf{P}}_{X,Y}(I_{j}\times J_{\ell })}{\sqrt{r_{j} z_{\ell }}}
=
\frac{\sum _{j=1}^{w} {\mathsf{P}}_{X,Y}(I_{j}\times J_{\ell })}{\sqrt{z_{\ell }}}
= \frac{{\mathsf{P}}_{Y}(J_{\ell })}{\sqrt{z_{\ell }}} = \sqrt{z_{
\ell }} = \beta _{\ell },
\end{equation*}
meaning that the $\ell $-th entries of
$\gamma ^{\top }{\widetilde{P}}$ and $\beta $ agree. Similarly, one can
verify that $B\beta = C\beta = \mathbf 0$. Assume the singular value decomposition
of $C$ is $C = U \Lambda V^{\top}$, where $U,V$ are orthogonal matrices
in the form of $U= ({\widetilde{U}}, \gamma )$ and
$V = ({\widetilde{V}},\beta )$ and
$\Lambda = \mathsf{diag} \left \{ {\mu _{1},\mu _{2},\cdots ,\mu _{w}}
\right \} $ with
$1\geq \mu _{1} \geq \mu _{2} \geq \cdots \geq \mu _{w} = 0$. (These are
in fact singular values of the discretized likelihood operator.) Then
we have
\begin{equation*}
{\widetilde{U}}^{\top }A{\widetilde{U}}= {\widetilde{U}}^{\top }{
\widetilde{U}}- {\widetilde{U}}^{\top }\gamma \gamma ^{\top }{
\widetilde{U}}= {\widetilde{U}}^{\top }{\widetilde{U}}= \mathbf I_{w-1},
\end{equation*}
where we use that the columns of ${\widetilde{U}}$ are orthonormal and
orthogonal to $\gamma $. Similarly,
${\widetilde{V}}^{\top }B{\widetilde{V}}= \mathbf I_{w-1}$ and
\begin{equation*}
{\widetilde{U}}^{\top }C{\widetilde{V}}= {\widetilde{U}}^{\top }({
\widetilde{U}},\gamma ) \Lambda ({\widetilde{V}},\beta )^{\top }{
\widetilde{V}}= (\mathbf I_{w-1},\mathbf 0) \Lambda (\mathbf I_{w-1},
\mathbf 0)^{\top }= \widetilde{\Lambda },
\end{equation*}
where diagonal matrix
$\widetilde{\Lambda } = \mathsf{diag} \left \{ {\mu _{1},\cdots ,\mu _{w-1}}
\right \} $. Let $\tilde{s} = {\widetilde{U}}^{\top } s$ and
$\tilde{t} = {\widetilde{V}}^{\top } t$, whose covariance matrices are
\begin{equation*}
\widetilde{\Sigma }_{0} = \mathbb{E}_{0}
\begin{bmatrix}
\tilde{s} \tilde{s}^{\top }&\tilde{s}\tilde{t}^{\top }
\\
\tilde{t}\tilde{s}^{\top }&\tilde{t}\tilde{t}^{\top }
\end{bmatrix}
=
\begin{bmatrix}
{\widetilde{U}}^{\top }&\mathbf 0
\\
\mathbf 0&{\widetilde{V}}^{\top }
\end{bmatrix}
\begin{bmatrix}
A &\mathbf 0
\\
\mathbf 0&B
\end{bmatrix}
\begin{bmatrix}
{\widetilde{U}}&\mathbf 0
\\
\mathbf 0&{\widetilde{V}}
\end{bmatrix}
= \mathbf I_{2w-2},
\end{equation*}
and
\begin{equation*}
\widetilde{\Sigma }_{1} = \mathbb{E}_{1}
\begin{bmatrix}
\tilde{s} \tilde{s}^{\top }&\tilde{s}\tilde{t}^{\top }
\\
\tilde{t}\tilde{s}^{\top }&\tilde{t}\tilde{t}^{\top }
\end{bmatrix}
=
\begin{bmatrix}
{\widetilde{U}}^{\top }&\mathbf 0
\\
\mathbf 0&{\widetilde{V}}^{\top }
\end{bmatrix}
\begin{bmatrix}
A &\sqrt{\alpha }C
\\
\sqrt{\alpha }C^{\top }&B
\end{bmatrix}
\begin{bmatrix}
{\widetilde{U}}&\mathbf 0
\\
\mathbf 0&{\widetilde{V}}
\end{bmatrix}
=
\begin{bmatrix}
\mathbf I_{w-1} &{\sqrt{\alpha }}\widetilde{\Lambda }
\\
{\sqrt{\alpha }}\widetilde{\Lambda }^{\top } &\mathbf I_{w-1}
\end{bmatrix}
.
\end{equation*}
Now, we have mapped the original data into $\tilde{s}$ and
$\tilde{t}$ whose covariance matrix under ${\mathcal{H}}_{0}$ is nondegenerate.

Consider two cases according to whether $\widetilde{\Sigma }_{1}$, the
covariance matrix under ${\mathcal{H}}_{1}$ is degenerate:

\underline{Case I}: $\mu _{1}=1$ and $\alpha =1$. In this case, under
${\mathcal{H}}_{1}$, $\text{Cov}(\tilde s,\tilde t)$ equals the first element
of $\sqrt{\alpha }\widetilde{\Lambda }$, which is $1$. Additionally, since
$\tilde s$ and $\tilde t$ have variance $1$ and mean $0$, they are equal
with probability 1. Under ${\mathcal{H}}_{0}$, by CLT,
$\tilde s_{1}-\tilde t_{1} \to {\mathcal{N}}(0,2)$ weakly as
$n\to \infty $. Thus the test
${\mathbf{1}_{\left \{{\tilde s_{1}=\tilde t_{1}}\right \}}}$ attains vanishing
probability of error.

\underline{Case II}: $\mu _{1}<1$ or $\alpha < 1$. As a result,
$\widetilde{\Sigma}_{1}$ is invertible, with eigenvalues
$1 \pm \sqrt{\alpha} \mu _{k}$. By CLT, as $n \to \infty $,
$(\tilde{s},\tilde{t})^{\top}$ converges to
${\mathsf{N}}_{0} \triangleq {\mathcal{N}}(\mathbf 0,
\widetilde{\Sigma}_{0})$ and
${\mathsf{N}}_{1} \triangleq {\mathcal{N}}(\mathbf 0,
\widetilde{\Sigma}_{1})$ under ${\mathcal{H}}_{0}$ and
${\mathcal{H}}_{1}$, respectively. Consider the likelihood ratio test
${\mathbf{1}_{\left \{{T>0}\right \}}}$ under the normal limits, where
\begin{equation*}
T = -
\begin{pmatrix}
\tilde{s}
\\
\tilde{t}
\end{pmatrix}
^{\top }\left ( \widetilde{\Sigma }_{1}^{-1}-\mathbf I_{2w-2} \right )
\begin{pmatrix}
\tilde{s}
\\
\tilde{t}
\end{pmatrix}
- \sum _{k=1}^{w} \log (1-\alpha \mu _{k}^{2}).
\end{equation*}
By weak convergence,
$\mathbb{P}_{0}[T>0] = \mathbb{P}_{{\mathsf{N}}_{0}}[T>0] + o_{n}(1)$ and
$\mathbb{P}_{1}[T\leq 0] = \mathbb{P}_{{\mathsf{N}}_{1}}[T\leq 0] + o_{n}(1)$.
Thus it remains to bound
$\mathbb{P}_{{\mathsf{N}}_{0}}[T>0]+\mathbb{P}_{{\mathsf{N}}_{1}}[T
\leq 0] = 1-\mathrm{TV}({\mathsf{N}}_{0},{\mathsf{N}}_{1})$.

To this end, recall that $\mu _{k}$ is the singular value of $C$. Consequently,
\begin{equation*}
\sum _{k=1}^{w}\mu _{k}^{2} =\mathsf{Tr}\left ( C^{\top }C \right ) =
\mathsf{Tr}\left ( {\widetilde{P}}^{\top }{\widetilde{P}}- {
\widetilde{P}}^{\top } \gamma \beta ^{\top } - \beta \gamma ^{\top }{
\widetilde{P}}+ \beta \gamma ^{\top }\gamma \beta ^{\top } \right )=
\mathsf{Tr}({\widetilde{P}}^{\top} {\widetilde{P}}) - 1,
\end{equation*}
where the last equality utilizes the fact that
$\mathsf{Tr}(\beta \gamma ^{\top }{\widetilde{P}}) = \mathsf{Tr}({
\widetilde{P}}\beta \gamma ^{\top }) = \mathsf{Tr}(\gamma \gamma ^{
\top }) = 1$ and
$C\beta = {\widetilde{P}}\beta - \gamma = \mathbf 0$.

By definition of ${\mathsf{P}}_{{\mathcal{E}}}$ and
${\mathsf{Q}}_{{\mathcal{E}}}$, we have
\begin{equation*}
\mathsf{Tr}\left ( {\widetilde{P}}^{\top }{\widetilde{P}} \right ) =
\sum _{k=1}^{w} \sum _{l=1}^{w}
\frac{{\mathsf{P}}_{X,Y}^{2}(I_{k}\times J_{l})}{r_{k}z_{l}} = \sum _{k,l}
\frac{{\mathsf{P}}_{X,Y}^{2}(I_{k}\times J_{l})}{{\mathsf{P}}_{X}\otimes {\mathsf{P}}_{Y}(I_{k}\times J_{l})}
= \sum _{k,l}
\frac{{\mathsf{P}}^{2}_{{\mathcal{E}}}(k,l)}{{\mathsf{Q}}_{{\mathcal{E}}}(k,l)}
= \chi ^{2}({\mathsf{P}}_{{\mathcal{E}}}\|{\mathsf{Q}}_{{\mathcal{E}}})+1.
\end{equation*}
Then
$\sum _{k=1}^{w-1} \mu _{k}^{2} = \chi ^{2}({\mathsf{P}}_{{
\mathcal{E}}}\|{\mathsf{Q}}_{{\mathcal{E}}}) > K$ by choice of the partition.
The total variation can then be bounded via Hellinger distance as follows:
\begin{align}
\label{eq:bound_total_variation_VTEX1}
1 - \mathrm{TV}({\mathsf{N}}_{0},{\mathsf{N}}_{1}) \leq 1 -
\frac{H^{2}({\mathsf{N}}_{0},{\mathsf{N}}_{1})}{2} =
\frac{{\mathrm{det}} \left ( \widetilde{\Sigma }_{0} \widetilde{\Sigma }_{1} \right )^{1/4}}{{\mathrm{det}} \left ( \frac{\widetilde{\Sigma }_{0} + \widetilde{\Sigma }_{1}}{2} \right )^{1/2}}
& =\left \{ \prod _{k=1}^{w-1}
\frac{1-\alpha \mu _{k}^{2}}{(1-\alpha \mu _{k}^{2}/4)^{2}} \right \}^{1/4}
\nonumber
\\
& \leq \exp \left ( -\frac{\alpha }{8} \sum _{k=1}^{w-1} \mu _{k}^{2}
\right ) \leq \exp \left ( -\frac{\alpha K}{8} \right ),
\end{align}
where the first equality applies the Hellinger distance between two Gaussians
(\cite[Sec.~7.7]{polyanskiy2025information}) and the second inequality
holds due to $(1-x)/(1-x/4)^{2} \le 1-x/2 \le \exp (-x/2)$. The proof is
complete by the arbitrariness of $K$.

Finally, assume $\alpha = 1$, $I_{\chi ^{2}}(X;Y) < \infty $ and
$\rho (X;Y) =\lambda _{1} = 1$. Consider the same test used in proving~Theorem~\ref{thm:upper bound general eigen 1}.
Note that ${\mathsf{P}}_{X,Y}$ is a fixed distribution. Hence, the third-moment
condition in~Theorem~\ref{thm:upper bound general eigen 1} automatically
holds, and $T_{\mathrm{{top}}}$ achieves strong detection when
$\rho (X;Y) = 1$.
\end{proof}

\section{Discussion and open problems}
\label{sec: discussion}

In this paper, we address the problem of testing dependency between two
datasets with missing correspondence. For a fixed joint distribution
${\mathsf{P}}_{X,Y}$ and the sample sizes $m =\alpha n$ for a constant
$\alpha \in (0,1]$, we establish that strong detection is possible if and
only if the $\chi ^{2}$-information
$I_{\chi ^{2}}(X;Y)=\chi ^{2}({\mathsf{P}}_{X,Y}\|{\mathsf{P}}_{X}
\otimes {\mathsf{P}}_{Y})$ is infinite, or the maximal correlation
$\rho (X;Y)$ equals $1$ and $\alpha =1$, thereby resolving Bai-Hsing's
conjecture positively. We further extend the results to more general settings
where ${\mathsf{P}}_{X,Y}$ or the ambient dimension $d$ may depend on
$n$. These general results determine the sharp detection thresholds for
Gaussian or Bernoulli models in both low and high dimensions, closing gaps
in the existing literature.

We close the paper by discussing several open problems.
\begin{itemize}
\item \textit{Highly unbalanced sample sizes}: In certain practical scenarios,
one dataset can be much bigger than the other. For example, one of the
deanonymization experiments carried out in
\cite[Section 5]{narayanan2008robust} involves the Netflix and IMDb datasets
of size $n=480{,}189$ and $m=50$, respectively. As discussed in
Remark~\ref{rmk:unbalanced}, when $m=o(n)$, the detection threshold is open
even for Gaussian models in both low and high dimensions. Letting
$\alpha =m/n\to 0$, for fixed $d$,
Corollary~\ref{cor:lower bound general} yields the impossibility condition
of $1-\rho ^{2} = \Omega (\alpha ^{1/d})$ and our general results fail
to yield any non-trivial upper bound. For growing $d$, there remains a
substantial gap between the negative result of
$\rho ^{2} \leq \frac{1}{d}\log \frac{C}{\alpha}$ (from~Corollary~\ref{cor:lower bound general})
and positive result $\rho ^{2} \gg \frac{1}{d \alpha}$ (from~Theorem~\ref{thm:upper bound general eigen r}
with $r = d$). One might hope to narrow this gap by applying Theorem
\ref{thm:upper bound general eigen r} with a larger truncation level $r$. However, in the highly unbalanced
regime of $m=o(n)$, this makes the required moment condition difficult
to verify.
\item \textit{Partially overlapped databases}: An interesting generalization
of the broken sample problem concerns partially overlapping datasets, where
two databases contain $n$ and $m$ items respectively, but only a hidden
subset of $k$ pairs is correlated. We note that upper and lower bounds
for this problem have been obtained in~\cite{elimelech2024detection} for
the case where $m = n$. The techniques proposed in the present work can
be extended to the partially overlapped case to obtain sharp thresholds
for Gaussian databases, provided that $k = \Theta (n)$. Characterizing
the sharp detection thresholds for the sparse case (i.e. $k = o(n)$) remains an open problem.
\item \textit{Adapting to the joint distribution}: The theory and methods
developed in this paper assumed that the joint distribution
${\mathsf{P}}_{X,Y}$ is known, which may not be a practical assumption.
To this end, for equal sample sizes, a promising proposal is the
\textit{Wasserstein test} (\ref{eq:wass}) discussed in
Section~\ref{sec:power analysis}, based on the intuition that the two empirical
distributions are close for correlated samples and far for independent
samples. This test is both universal (adapting to the unknown
${\mathsf{P}}_{X,Y}$), computationally appealing (scaling favorably with
the dimension), and has strong empirical performance. Identifying the distribution
class for which the Wasserstein test attains the optimal detection threshold
is an interesting open question.

In a companion paper~\cite{GongWuXu25}, we explore the challenging setting
of unknown ${\mathsf{P}}_{X,Y}$ in the context of
\textit{shuffled linear regression}, where ${\mathsf{P}}_{X,Y}$ is determined
by
$Y=\rho \left \langle X, \beta \right \rangle +\sqrt{1-\rho ^{2}} Z$ with
Gaussian covariates $X \sim {\mathcal{N}}(0,\mathbf I_{d})$, Gaussian noise
$ Z \sim {\mathcal{N}}(0,1)$, and $\beta $ being the unknown regression
coefficients uniformly distributed on the unit sphere. In this model, both
the construction and the analysis of the tests require new ideas.
\end{itemize}

\begin{appendix}
\section{Auxiliary lemmas}
\label{app:appendix}

\begin{lemma}
\label{lmm:cycle index}
Let $N_{i}$ denote the number of $i$-cycles in
$\sigma \in S_{\ell }$ for $i \in [\ell ]$. Then
\begin{equation*}
\frac{1}{\ell !} \sum _{\sigma \in S_{\ell }} \prod _{i = 1}^{\ell }
\left ( \sum _{k=0}^{\infty }\lambda _{k}^{2i} \right )^{N_{i}} = a_{
\ell },
\end{equation*}
where
$a_{\ell }= [z^{\ell }] \prod _{k=0}^{\infty }1/(1-z \lambda ^{2}_{k})$
is the $\ell $-th coefficient in the power series.
\end{lemma}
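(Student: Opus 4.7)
The plan is to recognize the left-hand side as the cycle-index polynomial of $S_\ell$ evaluated at the power sums $p_i = \sum_{k=0}^\infty \lambda_k^{2i}$, and then invoke the classical exponential generating function identity for this polynomial.

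First, I would regroup the sum over $S_\ell$ by cycle type. Recall that the number of permutations $\sigma \in S_\ell$ with cycle type $(N_1,N_2,\ldots,N_\ell)$ (satisfying $\sum_i i N_i = \ell$) is $\ell!/\prod_{i=1}^{\ell} i^{N_i} N_i!$. Writing $p_i := \sum_{k=0}^\infty \lambda_k^{2i}$, the LHS becomes
\[
\frac{1}{\ell!} \sum_{\sigma\in S_\ell}\prod_{i=1}^\ell p_i^{N_i(\sigma)} \;=\; \sum_{\substack{(N_1,\ldots,N_\ell)\in\mathbb{Z}_{\ge 0}^\ell \\ \sum_i i N_i = \ell}} \prod_{i=1}^\ell \frac{p_i^{N_i}}{i^{N_i}\, N_i!}.
\]

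Next, I would compute a closed form for the ordinary generating function of $a_\ell$ via the logarithm-of-product trick. Expanding $-\log(1-z\lambda_k^2) = \sum_{i\ge 1} (z\lambda_k^2)^i/i$ and interchanging the sums (justified when $\sum_k \lambda_k^2<\infty$; otherwise both sides of the lemma are $+\infty$ for some $\ell$ and we may argue by truncation and monotonicity in the $\lambda_k$'s) yields
\[
\prod_{k=0}^\infty \frac{1}{1-z\lambda_k^2} \;=\; \exp\!\left(\sum_{i\ge 1} \frac{p_i\, z^i}{i}\right) \;=\; \prod_{i\ge 1}\sum_{N_i\ge 0} \frac{1}{N_i!}\left(\frac{p_i\, z^i}{i}\right)^{N_i}.
\]
Extracting the coefficient of $z^\ell$ from the right-most expression gives exactly $\sum_{\sum_i i N_i = \ell} \prod_i p_i^{N_i}/(i^{N_i} N_i!)$, matching the rearranged form of the LHS.

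There is no real obstacle here; this is the classical exponential formula / cycle-index generating identity for the symmetric group. The only points requiring a brief justification are (i) the interchange of the infinite sums over $k$ and $i$, which is legal by nonnegativity of all terms (Tonelli), and (ii) the case $\sum_k \lambda_k^2 = \infty$, which can be handled by truncating to finitely many $\lambda_k$'s, applying the identity, and letting the truncation level go to infinity so that both sides converge monotonically (possibly to $+\infty$) to the claimed quantities.
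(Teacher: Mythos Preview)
Your proof is correct and follows essentially the same approach as the paper: both recognize the left-hand side as the cycle index polynomial of $S_\ell$ evaluated at the power sums $p_i=\sum_k \lambda_k^{2i}$ and then appeal to the classical generating-function identity. The paper simply cites P\'olya's Theorem as a black box to obtain $\sum_{i_1+i_2+\cdots=\ell}(\lambda_1^2)^{i_1}(\lambda_2^2)^{i_2}\cdots$, whereas you unpack the same result via the exponential formula (grouping by cycle type and expanding $\exp(\sum_i p_i z^i/i)$); this is the same argument at a finer level of detail.
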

\begin{proof}
Define the cycle index polynomial of $S_{\ell }$ (see~\cite[Page 83]{stanley2018algebraic}):
\begin{equation*}
Z_{S_{\ell }} (z_{1},\cdots , z_{\ell }) = \frac{1}{\ell !} \sum _{
\sigma \in S_{\ell }} \prod _{i=1}^{\ell }z_{i}^{N_{i}}.
\end{equation*}
Then the left hand side of the desired equality is exactly the cycle index
polynomial of $S_{\ell }$ evaluated at
$(\sum _{k=0}^{\infty }\lambda _{k}^{2}, \sum _{k=0}^{\infty }
\lambda _{k}^{4},\cdots , \sum _{k=0}^{\infty }\lambda _{k}^{2\ell })$.
Additionally, define the polynomial (see~\cite[Page 87]{stanley2018algebraic})
\begin{equation*}
F_{S_{\ell }}(r_{1},r_{2},\cdots ) = \sum _{i_{1}+i_{2}+\cdots =
\ell } r_{1}^{i_{1}}r_{2}^{i_{2}} \cdots .
\end{equation*}
\par
P\'olya's Theorem~\cite[Theorem 7.7]{stanley2018algebraic} states that
\begin{equation*}
Z_{S_{\ell }}\left ( \sum _{i=1}^{\infty }r_{i},\sum _{i=1}^{\infty }r_{i}^{2},
\cdots ,\sum _{i=1}^{\infty }r_{i}^{\ell } \right ) = F_{S_{\ell }}(r_{1},r_{2},
\cdots ).
\end{equation*}
Substituting $r_{i} = \lambda _{i}^{2}$ leads to
\begin{equation*}
\frac{1}{\ell !} \sum _{\sigma \in S_{\ell }} \prod _{i = 1}^{\ell }
\left ( \sum _{k=0}^{\infty }\lambda _{k}^{2i} \right )^{N_{i}} =
\sum _{i_{1}+i_{2}+\cdots = \ell } (\lambda _{1}^{2})^{i_{1}}(
\lambda _{2}^{2})^{i_{2}}\cdots = [z^{\ell }] \prod _{k=0}^{\infty }
\frac{1}{1-z \lambda _{k}^{2}} = a_{\ell },
\end{equation*}
where the penultimate equality follows from expanding each term inside
the product as a geometric sum.
\end{proof}

\begin{lemma}
\label{lmm:limit of a_n}
If $\lambda _{1} \leq 1 - c$ and
$\sum _{k=1}^{\infty }\lambda ^{2}_{k} \leq C$ for two constants
$c\in (0,1)$ and $C > 0$, then
\begin{equation*}
\left | a_{\ell }- \prod _{k=1}^{\infty }\frac{1}{1-\lambda ^{2}_{k}}
\right | \leq \frac{C'}{r^{\ell}},
\end{equation*}
where $r > 1$ and $C' > 0$ are two constants depending on $c$ and
$C$.
\end{lemma}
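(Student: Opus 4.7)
The plan is to view $a_\ell$ as the $\ell$-th coefficient of a rational-like generating function and then extract tail estimates via analytic properties. Writing $\lambda_0 = 1$ separately, factor
\[
\prod_{k=0}^\infty \frac{1}{1-z\lambda_k^2} = \frac{f(z)}{1-z}, \qquad f(z) \triangleq \prod_{k=1}^\infty \frac{1}{1-z\lambda_k^2} = \sum_{j=0}^\infty b_j z^j,
\]
with $b_j \geq 0$. Since $\frac{1}{1-z} = \sum_{j \geq 0} z^j$, comparing coefficients gives $a_\ell = \sum_{j=0}^\ell b_j$. Under the hypotheses $\lambda_1 \leq 1-c$ and $\sum_{k\geq 1}\lambda_k^2 \leq C$, the product $f(1) = \prod_{k\geq 1}(1-\lambda_k^2)^{-1}$ converges (since $1-\lambda_k^2 \geq c(2-c)$ and $\sum_k \lambda_k^2 < \infty$), so
\[
\left| a_\ell - \prod_{k=1}^\infty \frac{1}{1-\lambda_k^2} \right| = \sum_{j > \ell} b_j.
\]

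The main step is to bound the tail geometrically by showing that $f$ is bounded on a disk of radius $r>1$ and then invoking Cauchy's inequality. Set $r = 1/(1-c)$, so that $r>1$ and $r\lambda_k^2 \leq r(1-c)^2 = 1-c < 1$ for every $k \geq 1$. On $|z| = r$ we have $|1-z\lambda_k^2| \geq 1 - r\lambda_k^2 \geq c$, and using $-\log(1-x) \leq x/(1-x)$ for $x \in [0,1)$,
\[
\log|f(z)| \;\leq\; \sum_{k=1}^\infty \frac{r\lambda_k^2}{1-r\lambda_k^2} \;\leq\; \frac{r}{c}\sum_{k=1}^\infty \lambda_k^2 \;\leq\; \frac{C}{c(1-c)}.
\]
Hence $\sup_{|z|=r}|f(z)| \leq M$ with $M \triangleq \exp\!\bigl(C/(c(1-c))\bigr)$.

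Cauchy's inequality applied to the circle $|z|=r$ then yields $b_j \leq M/r^j$, and summing the geometric tail gives
\[
\sum_{j > \ell} b_j \;\leq\; \frac{M}{r^\ell (r-1)} \;=\; \frac{M(1-c)}{c}\cdot \frac{1}{r^\ell},
\]
which is the desired bound with $C' = M(1-c)/c$ depending only on $c$ and $C$. The only obstacle is the mild arithmetic of choosing the contour radius $r$: it must exceed $1$ (so the geometric tail decays) while still keeping $r\lambda_1^2 < 1$ (so $f$ is analytic on $|z|\leq r$); the choice $r = 1/(1-c)$ comfortably satisfies both and yields an explicit constant. No further delicate estimate is needed, since the nonnegativity of $b_j$ lets the termwise bound pass directly to the tail sum.
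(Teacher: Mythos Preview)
Your proof is correct and close in spirit to the paper's, but the packaging differs in a way worth noting. Both arguments factor out the $\lambda_0=1$ term to write the generating function as $g(z)/(1-z)$ with $g(z)=\prod_{k\ge1}(1-z\lambda_k^2)^{-1}$, observe that $g$ is analytic and bounded on a disk of radius $r\in(1,\lambda_1^{-2})$, and then convert that bound into an $O(r^{-\ell})$ coefficient estimate. The paper does this last step by evaluating the contour integral $\frac{1}{2\pi i}\oint_{|z|=r} z^{-\ell-1}(1-z)^{-1}g(z)\,dz$ via the residue theorem, identifying the two residues at $z=0$ and $z=1$ as $a_\ell$ and $-g(1)$ respectively, and then bounding the integral directly. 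You instead use the combinatorial identity $a_\ell=\sum_{j\le\ell}b_j$ (from the Cauchy product with $\frac{1}{1-z}$) so that the error is the tail $\sum_{j>\ell}b_j$, and then bound each $b_j$ by Cauchy's coefficient inequality. Your route is arguably cleaner: it avoids the residue calculation and the nonnegativity of $b_j$ makes the tail argument transparent. Your explicit choice $r=1/(1-c)$ also gives fully explicit constants, whereas the paper leaves $r\in(1,(1-c)^{-2})$ unspecified.
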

\begin{proof}
Let
$f(z) \triangleq \prod _{k=0}^{\infty }\frac{1}{1-z \lambda _{k}^{2}} =
\sum _{k=0}^{\infty }a_{k} z^{k}$ and define
$g(z) = \prod _{k=1}^{\infty }\frac{1}{1-z\lambda _{k}^{2}} = (1-z)f(z)$.
Under the given conditions,
$\lambda ^{2}_{1} \le c' \equiv (1-c)^{2} < 1$. We can then fix
$r \in (1,\frac{1}{c'})$ which satisfies
$r \lambda _{1}^{2} \le rc' < 1$. Consider integral
\begin{equation*}
I = \frac{1}{2\pi \textbf{i}} \oint _{|z| = r} \frac{1}{z^{\ell +1}}f(z){
\mathrm{d}}z = \frac{1}{2\pi \textbf{i}} \oint _{|z| = r}
\frac{1}{z^{\ell +1}} \frac{1}{1-z} g(z) {\mathrm{d}}z,
\end{equation*}
where the orientation is counterclockwise. Note that the integrand
$G(z) \equiv \frac{1}{z^{\ell +1}} \frac{1}{1-z} g(z)$ has only two singularities,
$0$ and $1$, inside the contour. The singularities arising from
$g(z)$ (which occur at $1/\lambda _{k}^{2}$) lie strictly outside the contour
since $1/\lambda _{k}^{2} > r$. Applying the residue theorem yields
\begin{equation*}
I = \text{Res}(G,0)+\text{Res}(G,1) =
\frac{\left ( \frac{g(z)}{1-z} \right )^{(\ell )}\big |_{z=0}}{\ell !}
- \left ( \frac{g(z)}{z^{\ell +1}} \right )\Big|_{z=1} = a_{\ell}-g(1)
= a_{\ell}- \prod _{k=1}^{\infty }\frac{1}{1-\lambda _{k}^{2}}.
\end{equation*}
Furthermore,
\begin{equation*}
|I| \leq \frac{1}{2\pi} \oint _{|z| = r} \frac{1}{|z^{\ell +1}|}
\frac{1}{|1-z|} |g(z)| {\mathrm{d}}z \leq \frac{1}{2\pi}
\frac{1}{r^{\ell +1} (r-1)} \prod _{k=1}^{\infty }
\frac{1}{1-r\lambda _{k}^{2}} \cdot 2\pi r = \frac{1}{r^{\ell }(r-1)}
\prod _{k=1}^{\infty }\frac{1}{1-r\lambda _{k}^{2}}.
\end{equation*}
Here $r > 1$ and
\begin{equation*}
\prod _{k=1}^{\infty }\frac{1}{1-r\lambda _{k}^{2}} = \exp \left (
\sum _{k=1}^{\infty }-\ln (1-r\lambda _{k}^{2}) \right ) \leq \exp
\left ( \sum _{k=1}^{\infty }\frac{r}{1-r\lambda _{k}^{2}} \lambda _{k}^{2}
\right ) \leq \exp \left ( \frac{rC}{1-rc'} \right ),
\end{equation*}
As a result,
\begin{equation*}
\left | a_{\ell }- \prod _{k=1}^{\infty }\frac{1}{1-\lambda _{k}^{2}}
\right | \leq \frac{1}{r^{\ell }(r-1)} \exp \left ( \frac{rC}{1-rc'}
\right ) = \frac{C'}{r^{\ell}},
\end{equation*}
where $C' \triangleq \frac{\exp \left ( rC / (1-rc') \right )}{r-1}$.
\end{proof}

\begin{lemma}
\label{lmm:properties of f in proof of Bernoulli case}
Let
\[f(\rho,q) = 1-\sqrt{(1-q)^3(1-q+\rho q)} -2\sqrt{q^2(1-q)^2(1-\rho)} - \sqrt{q^3(q+\rho (1-q))},\]
where $q \equiv q(n)  \in (0,1)$ and $\rho \equiv \rho(n) \in (-\min\{\frac{1-q}{q},\frac{q}{1-q}\},1)$.
\begin{enumerate}
    \item If $q = o(1)$, then
    \[f(\rho,q) = O(q).\]
    \item If $q = o(1)$, $\rho = o(1)$ and $\frac{\abs{\rho}}{q}\diverge$, then
    \[f(\rho,q) = O(\abs{\rho} q),\]
\end{enumerate}

\end{lemma}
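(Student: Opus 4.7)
The plan is to exploit the algebraic identity $1=(1-q)^2+2q(1-q)+q^2$ to split $f$ into three pieces, each of which pairs one of the three radicals in the definition of $f$ with the corresponding component of $1$. After factoring each radical as $\sqrt{(1-q)^3(1-q+\rho q)} = (1-q)^2\sqrt{1+\rho q/(1-q)}$, $\sqrt{q^2(1-q)^2(1-\rho)} = q(1-q)\sqrt{1-\rho}$, and $\sqrt{q^3(q+\rho(1-q))} = q^2\sqrt{1+\rho(1-q)/q}$, this produces the decomposition
\[
f(\rho,q) \;=\; (1-q)^2\phi_1 + 2q(1-q)\phi_2 + q^2\phi_3,
\]
where $\phi_1 = 1-\sqrt{1+\rho q/(1-q)}$, $\phi_2 = 1-\sqrt{1-\rho}$, and $\phi_3 = 1-\sqrt{1+\rho(1-q)/q}$. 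The admissibility constraint on $\rho$ ensures that all radicands are nonnegative, and $f\ge 0$ since it equals half the squared Hellinger distance between $\sfP_{X,Y}$ and $\sfP_X\otimes\sfP_Y$.

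The main tool is the elementary identity $|1-\sqrt{1+u}|=|u|/(1+\sqrt{1+u})$, which furnishes two regimes: $|1-\sqrt{1+u}|\le|u|$ whenever $u\ge -1$, and $|1-\sqrt{1+u}|\le\sqrt{u}$ whenever $u\ge 0$ (using $\sqrt{1+u}\le 1+\sqrt{u}$). For part~1 I would immediately bound $(1-q)^2|\phi_1|= O(\rho q)=O(q)$ (since $|\rho|\le 1$) and $2q(1-q)|\phi_2|\le 2q=O(q)$ (using $|\phi_2|\le 1$ uniformly). The $\phi_3$ term requires a case split: if $|\rho(1-q)/q|\le 1$ then $|\phi_3|\le|\rho|/q$, so $q^2|\phi_3|\le|\rho|q=O(q)$; if $|\rho(1-q)/q|>1$ (which forces $\rho>0$ by admissibility) then the second regime gives $|\phi_3|\le\sqrt{\rho/q}$ and hence $q^2|\phi_3|\le q^{3/2}=O(q)$. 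Summing the three pieces yields $f=O(q)$.

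For part~2, the hypothesis $|\rho|/q\to\infty$ combined with the lower bound $\rho>-q/(1-q)$ forces $\rho>0$ for all sufficiently small $q$. Since $\rho=o(1)$, the first regime bound sharpens $\phi_2$ via $|\phi_2|\le|\rho|$, so $2q(1-q)|\phi_2|=O(|\rho|q)$, while $(1-q)^2|\phi_1|=O(\rho q)$ as before. For $\phi_3$, now $\rho(1-q)/q\to\infty$, so the second regime applies: $q^2|\phi_3|\le q^2\sqrt{\rho(1-q)/q}\le q^{3/2}\sqrt{\rho}$, which equals $\rho q\cdot\sqrt{q/\rho}=o(\rho q)$ since $q/\rho\to 0$. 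Combining the three bounds delivers $f=O(|\rho|q)$.

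The main obstacle is the $\phi_3$ term: $|\phi_3|$ diverges as $\rho/q\to\infty$, so the $q^2$ prefactor must be handled carefully through the second regime bound. The key scaling identity is $q^{3/2}\sqrt{\rho}=\sqrt{q/\rho}\cdot\rho q$, which is subdominant to $\rho q$ precisely under the part~2 assumption $q=o(\rho)$; this is why the hypothesis $|\rho|/q\to\infty$ is the right condition to obtain the sharper $O(|\rho|q)$ bound from the weaker $O(q)$ one.
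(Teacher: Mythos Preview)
Your proof is correct and takes a genuinely different route from the paper. The paper proceeds by direct Taylor expansion of the radicals $(1-q)^{3/2}$, $(1-q+\rho q)^{1/2}$, and $(1-\rho)^{1/2}$ about $q=0$ (and $\rho=0$ for part~2), then simply drops the last one or two nonnegative subtracted terms in $f$ to obtain an upper bound of the form $f\le 1-\sqrt{(1-q)^3(1-q+\rho q)}$ (part~1) or $f\le 1-\sqrt{(1-q)^3(1-q+\rho q)}-2q(1-q)\sqrt{1-\rho}$ (part~2), and reads off the asymptotics from the expansion. Your approach instead exploits the exact algebraic identity $1=(1-q)^2+2q(1-q)+q^2$ to pair each radical with its ``reference'' term, reducing the problem to bounding three expressions of the form $1-\sqrt{1+u}$ via the elementary inequalities $|1-\sqrt{1+u}|\le|u|$ and $|1-\sqrt{1+u}|\le\sqrt{u}$. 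Your method is a bit more systematic and avoids tracking Taylor remainders; it also yields a two-sided bound on $f$ (via the triangle inequality) rather than relying on $f\ge 0$. The paper's method is slightly shorter once one sees which terms to discard, but requires more care with the $o(\cdot)$ error terms, particularly in justifying $o(q)=o(|\rho|q)$ in part~2. Your case split on $\phi_3$ and the observation that $|\rho|/q\to\infty$ forces $\rho>0$ (via the admissibility constraint $\rho>-q/(1-q)$) match the paper's reasoning exactly.
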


\begin{proof}
\begin{enumerate}
\item By Taylor expansion,
\[(1-q)^{\frac{3}{2}} = 1 - \frac{3}{2}q + o(q),\quad (1-q+\rho q)^{\frac{1}{2}} = 1 - \frac{1-\rho}{2}q + o(q),\]
implying that
\[\sqrt{(1-q)^3(1-q+\rho q)} = 1-2q + \frac{\rho q}{2} + o(q).\]

Then
\[f(\rho,q) \leq 1 - \sqrt{(1-q)^3(1-q+\rho q)} = 2q - \frac{\rho q}{2} + o(q) = O(q).\]
    \item By Taylor expansion, we have
    \[(1-q)^{\frac{3}{2}} = 1 - \frac{3}{2}q + o(q) = 1 - \frac{3}{2}q + o(\abs{\rho} q),\]
    where the last equality follows from our assumption $\frac{\abs{\rho}}{q}\diverge$ and high order terms have order at least $2$.
    
    Similarly,
    \[(1-q+\rho q)^{\frac{1}{2}} = 1 - \frac{1-\rho}{2}q +o(\abs{\rho} q),\]
hence
    \[\sqrt{(1-q)^3(1-q+\rho q)} = 1 - 2q + \frac{\rho q}{2} + o(\abs{\rho} q).\]

    By the same argument,
    \[2\sqrt{q^2(1-q)^2(1-\rho)} = 2q(1-q)\pth{1-\frac{1}{2}\rho + o(\abs{\rho})} = 2q - \rho q + o(\abs{\rho} q).\]

    Consequently,
    \[f(\rho,q) \leq 1-\sqrt{(1-q)^3(1-q+\rho q)} -2\sqrt{q^2(1-q)^2(1-\rho)} = \frac{\rho q}{2} + o(\abs{\rho} q) = O(\abs{\rho} q),\]
    where $\rho$ must be positive otherwise condition $\abs{\rho} / q \to \infty$ cannot hold.

\end{enumerate}
\end{proof}

\begin{lemma}
\label{lmm:property of f-divergence}

Let $\sfP,\sfQ$ be two probability measures on $\calX$ with a $\sigma$-algebra $\calF$. Given a finite $\calF$-measurable partition $\calE = \{E_1,E_2,\cdots,E_n\}$, define the distribution $\sfP_\calE$ on $[n]$ by $\sfP_\calE(i) = \sfP(E_i)$ and $\sfQ_\calE(i) = \sfQ(E_i)$. Then we have
\[\chi^2(\sfP\|\sfQ) = \sup_{\calE} \chi^2(\sfP_\calE\|\sfQ_\calE),\]
where the supremum is over all finite $\calF$-measurable partitions $\calE$.
\end{lemma}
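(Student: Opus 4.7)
The plan is to prove the two inequalities separately. For any finite $\calF$-measurable partition $\calE=\{E_1,\ldots,E_n\}$, I would start by rewriting
\[
\chi^2(\sfP_\calE\|\sfQ_\calE)+1 \;=\; \sum_{i:\sfQ(E_i)>0}\frac{\sfP(E_i)^2}{\sfQ(E_i)} \;=\; \Expect_\sfQ[L_\calE^2],
\]
where $L_\calE \triangleq \sum_i \frac{\sfP(E_i)}{\sfQ(E_i)}\indc{E_i}$ (with the convention that a ratio with $\sfQ(E_i)=0<\sfP(E_i)$ equals $+\infty$). This identity reduces everything to an $L^2(\sfQ)$ geometry problem.

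For the ``$\geq$'' (data-processing) direction, I would observe that when $\sfP\ll\sfQ$ with Radon--Nikodym derivative $L=\diff\sfP/\diff\sfQ$, the piecewise-constant function $L_\calE$ is precisely the conditional expectation $\Expect_\sfQ[L\mid \sigma(\calE)]$, hence the $L^2(\sfQ)$-orthogonal projection of $L$ onto $\sigma(\calE)$-measurable functions. Jensen's inequality then gives $\Expect_\sfQ[L_\calE^2]\leq\Expect_\sfQ[L^2]$, i.e., $\chi^2(\sfP_\calE\|\sfQ_\calE)\leq\chi^2(\sfP\|\sfQ)$; when $\sfP\not\ll\sfQ$ the right-hand side is $+\infty$ and the inequality is trivial.

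For the ``$\leq$'' direction in the finite case $\chi^2(\sfP\|\sfQ)<\infty$, so that $L\in L^2(\sfQ)$, I would use the density of simple functions: given $\epsilon>0$, pick a simple $g=\sum_i c_i\indc{E_i}$ with $\|L-g\|_{L^2(\sfQ)}<\epsilon$. Since $L_\calE$ is the best $\sigma(\calE)$-measurable $L^2(\sfQ)$-approximation of $L$, we have $\|L-L_\calE\|_{L^2(\sfQ)}\leq \|L-g\|_{L^2(\sfQ)}<\epsilon$, and therefore by Pythagoras
\[
\chi^2(\sfP_\calE\|\sfQ_\calE) \;=\; \Expect_\sfQ[L^2]-\|L-L_\calE\|_{L^2(\sfQ)}^2-1 \;\geq\; \chi^2(\sfP\|\sfQ)-\epsilon^2,
\]
which, combined with the upper bound, shows that the supremum equals $\chi^2(\sfP\|\sfQ)$.

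The remaining case is $\chi^2(\sfP\|\sfQ)=\infty$, which splits into two sub-cases. If $\sfP\not\ll\sfQ$, pick $E\in\calF$ with $\sfQ(E)=0<\sfP(E)$; the partition $\{E,E^c\}$ immediately yields $\chi^2(\sfP_\calE\|\sfQ_\calE)=+\infty$. If instead $\sfP\ll\sfQ$ but $L\notin L^2(\sfQ)$, I would truncate $L_M\triangleq L\wedge M$, for which $\Expect_\sfQ[L_M^2]\uparrow\infty$ by monotone convergence. Defining the sub-probability measure $\tilde\sfP_M$ via $\diff\tilde\sfP_M/\diff\sfQ=L_M$ and applying the finite-case projection argument to $L_M\in L^2(\sfQ)$ produces partitions on which $\sum_i \tilde\sfP_M(E_i)^2/\sfQ(E_i)$ is arbitrarily close to $\Expect_\sfQ[L_M^2]$. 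Since $\tilde\sfP_M(E_i)\leq\sfP(E_i)$ termwise, the same partition satisfies $\chi^2(\sfP_\calE\|\sfQ_\calE)+1\geq \Expect_\sfQ[L_M^2]-\epsilon$, which is unbounded in $M$. The only real obstacle is this infinite-case bookkeeping: a direct projection argument fails when $L\notin L^2(\sfQ)$, so the truncation step is essential for reducing everything to the already-handled finite case.
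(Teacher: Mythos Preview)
Your argument is correct. The paper itself does not give a proof of this lemma at all: it simply cites Theorem~7.6 of Polyanskiy--Wu, which is the general variational characterization of $f$-divergences over finite partitions. Your proof is therefore genuinely more informative than what the paper records. It is also methodologically different from the cited result: rather than using convexity of a general $f$, you exploit the special structure of $\chi^2$, namely that $\chi^2(\sfP\|\sfQ)+1=\|\diff\sfP/\diff\sfQ\|_{L^2(\sfQ)}^2$, so that the problem becomes one of $L^2(\sfQ)$-approximation of $L$ by $\sigma(\calE)$-measurable simple functions, for which the conditional expectation is the optimal projection and Pythagoras does all the work. This is sharper and more transparent for $\chi^2$ specifically, at the cost of not generalizing verbatim to other $f$-divergences.

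One small notational slip: in your opening identity you restrict the sum to indices with $\sfQ(E_i)>0$ while also invoking a convention for $\sfQ(E_i)=0<\sfP(E_i)$. As written, the restricted sum could be finite even when $\chi^2(\sfP_\calE\|\sfQ_\calE)=+\infty$, so the first equality fails in that degenerate case. This does not affect the argument, since you handle $\sfP\not\ll\sfQ$ separately later, but it would be cleaner to either drop the restriction and use the convention, or simply assume $\sfP_\calE\ll\sfQ_\calE$ up front and defer the singular case.
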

\begin{proof}
    See~\cite[Theorem 7.6]{polyanskiy2025information}.
\end{proof}

\begin{lemma}
\label{lmm:property of f-divergence on product space}
Let ${\mathsf{P}},{\mathsf{Q}}$ be two probability measures on
${\mathcal{X}}\times {\mathcal{Y}}$ with $\sigma $-algebra
${\mathcal{F}}_{{\mathcal{X}}}\otimes {\mathcal{F}}_{{\mathcal{Y}}}$. Given
a finite ${\mathcal{F}}_{{\mathcal{X}}}$-measurable partition
${\mathcal{I}}= \{I_{1},I_{2},\cdots ,I_{w}\}$ of ${\mathcal{X}}$ and a
finite ${\mathcal{F}}_{{\mathcal{Y}}}$-measurable partition
${\mathcal{J}}= \{J_{1},J_{2},\cdots ,J_{w}\}$ of ${\mathcal{Y}}$,\footnote{Without
loss of generality, we assume the partitions ${\mathcal{I}}$ and
${\mathcal{J}}$ have equal sizes} their induced partition on
${\mathcal{X}}\times {\mathcal{Y}}$ is defined as
${\mathcal{E}}= \{I_{k} \times J_{l}\}_{k,l = 1}^{w}$. Define the distribution
${\mathsf{P}}_{{\mathcal{E}}}$ on $\{(k,l)\}_{k,l=1}^{w}$ by
${\mathsf{P}}_{{\mathcal{E}}}(k,l) = {\mathsf{P}}(I_{k}\times J_{l})$ and
${\mathsf{Q}}_{{\mathcal{E}}}(k,l) = {\mathsf{Q}}(I_{k}\times J_{l})$.
Then we have
\begin{equation*}
\chi ^{2}({\mathsf{P}}\|{\mathsf{Q}}) = \sup _{{\mathcal{E}}}\chi ^{2}({
\mathsf{P}}_{{\mathcal{E}}}\|{\mathsf{Q}}_{{\mathcal{E}}}),
\end{equation*}
where the supremum is over all possible partitions ${\mathcal{E}}$ induced
by finite ${\mathcal{F}}_{{\mathcal{X}}}$-measurable partition
${\mathcal{I}}$ and finite ${\mathcal{F}}_{{\mathcal{Y}}}$-measurable partition
${\mathcal{J}}$.
\end{lemma}

\begin{proof}
By Lemma~\ref{lmm:property of f-divergence}, $\chi^2(\sfP\|\sfQ) = \sup_{\calG} \chi^2(\sfP_\calG\|\sfQ_\calG)$, where
the supremum is over all finite $\calF_{\calX} \otimes \calF_{\calY}$-measurable partition $\calG$. Thus it suffices to show that we can restrict the supremum to be over all rectangles that are Cartesian product of $\calF_\calX$-measurable partition $\calI$ and finite $\calF_\calY$-measurable partition $\calJ$.
This is known for mutual information (see, e.g., \cite[(4.29)]{polyanskiy2025information}) and we next show it also holds for $\chi^2$-information.

To this end, fix a partition $\calG = \{G_1,G_2,\cdots,G_n\}$. Let $\sfP_\calG(i) = \sfP(G_i)$ and $\sfQ_\calG(i) = \sfQ(G_i)$. Fixing a constant $\epsilon > 0$, we construct a partition $\calE$ induced by partitions on $\calX$ and $\calY$ such that $\chi^2(\sfP_\calE \| \sfQ_\calE) \geq \chi^2(\sfP_\calG\|\sfQ_\calG) - \epsilon$.

Fixing any $\delta > 0$ and $a = \frac{1}{2^n}$, one can find $\tG_1$ such that $(\sfP+\sfQ)(G_1\Delta \tG_1) < a\delta$, where $\tG_1$ is a finite union of measurable rectangles (see~\cite[Exercise 1.4.28]{tao2011introduction}), and $\Delta$ denotes symmetric difference. Similarly, one can find $G_2^*$ which is also a finite union of measurable rectangles such that $(\sfP+\sfQ)(G_2^*\Delta G_2) < a\delta$. Then define $\tG_2 =  G_2^* \setminus \tG_1$ and observe that
    \begin{align*}
        G_2\Delta \tG_2  = (\tG_2\setminus G_2) \cup (G_2\setminus \tG_2)
        \subset (G_2^* \setminus G_2) \cup ((G_2\setminus G_2^*) \cup (G_2 \cap \tG_1))
         \subset (G_2^* \Delta G_2 )\cup (G_2 \cap \tG_1).
    \end{align*}
    Since $G_1 \cap G_2 = \emptyset$, we have
    $G_2 \cap\tG_1 \subset \tG_1 \Delta G_1$. Consequently, $G_2 \Delta \tG_2 \subset (G_2^* \Delta G_2) \cup (\tG_1 \Delta G_1)$, and
    hence $(\sfP+\sfQ)(G_2 \Delta \tG_2) \leq a\delta + a\delta = 2a\delta$.

    Repeat the above process and obtain $\tG_1,\tG_2,\cdots,\tG_{n-1}$ such that they are mutually disjoint and satisfy $(\sfP+\sfQ)(G_{k} \Delta \tG_k) \leq 2^{k-1}a\delta$ for $k \in [n-1]$. Finally define $\tG_n = (\calX\times \calY) \setminus (\tG_1\cup \tG_2\cdots \cup \tG_{n-1})$. One can check that $G_n \Delta \tG_n \subset (G_1\Delta \tG_1) \cup \cdots \cup (G_{n-1}\Delta \tG_{n-1})$
    and $(\sfP+\sfQ)(G_n\Delta \tG_n) \leq (2^{n}-1)a \delta < \delta$.

    Note that the difference between two finite unions of measurable rectangles is still a finite union of measurable rectangles. Therefore, $\widetilde{\calG} = \{\tG_1,\tG_2,\cdots,\tG_n\}$ is a partition of $\calX\times \calY$ and each of them is a finite union of measurable rectangles satisfying $(\sfP+\sfQ)(\tG_k \Delta G_k) < \delta$ for $k \in [n]$. Denote $\tG_k = \cup_{i=1}^{m_k} A_i^{(k)}\times B_i^{(k)}$ for $k \in [n]$, where $A_i^{(k)} \in \calF_\calX$, $B_i^{(k)} \in \calF_\calY$, and $A_i^{(k)}\times B_i^{(k)}$ are disjoint across different $i$.

For each $k \in [n]$,  there exists $\delta_k$ such that if $\delta< \delta_k$, then
$$
\frac{\sfP^2(G_k)}{\sfQ(G_k)} \leq 
\frac{(\sfP(G_k)-\delta)^2}{\sfQ(G_k)+\delta} + \frac{\epsilon}{n}
\leq \frac{\sfP^2(\tG_k)}{\sfQ(\tG_k )} +\frac{\epsilon}{n}.
$$

Thus if $\delta < \min_{k \in [n]}\delta_k,$
then 
    \[\chi^2(\sfP_\calG\|\sfQ_\calG) = \sum_{k=1}^n \frac{\sfP^2(G_k)}{\sfQ(G_k)} - 1 \leq \epsilon + \sum_{k=1}^n \frac{\sfP^2(\tG_k)}{\sfQ(\tG_k)}-1 = \epsilon+ \chi^2(\sfP_{\widetilde{\calG}}\| \sfQ_{\widetilde{\calG}}).\]
Let $\calM = \sth{A_i^{(k)}\times B_i^{(k)}: 1\leq i \leq m_k,1\leq k\leq n}$ denote the partition on $\calX\times \calY$ with only measurable rectangles. Consider a channel which maps $(i,k)$ to $k$. If the input follows distribution $\sfP_{\calM}$ (resp.\ $\sfQ_{\calM}$), then the output follows distribution $\sfP_{\widetilde{\calG}}$ (resp.\ $\sfQ_{\widetilde{\calG}}$). By the data processing inequality for chi-squared divergence (see e.g.~\cite[Theorem 7.4]{polyanskiy2025information}), we have
\begin{align}
\chi^2(\sfP_{\widetilde{\calG}}\| \sfQ_{\widetilde{\calG}}) \le
\chi^2(\sfP_\calM\|\sfQ_\calM). \label{eq:data_processing}
\end{align}
Combining it with the last displayed equation yields that 
$$\chi^2(\sfP_\calG\|\sfQ_\calG) \le \chi^2(\sfP_\calM\|\sfQ_\calM)
+ \epsilon.$$

    Note that $\calM$ may not be induced by partitions on $\calX$ and $\calY$. Thus, we define 
    \[\calI = \sth{\cap_{k=1}^n \cap_{i=1}^{m_k} C_i^{(k)}: C_i^{(k)} \in \sth{A_i^{(k)},\pth{A_i^{(k)}}^{\rm c}}},\]
and relabel elements of $\calI$ to be $\{I_1,I_2,\cdots,I_w\}$. Sets in $\calI$ are disjoint and $\cup_{k=1}^w I_k =\calX$;  hence $\calI$ is a partition of $\calX$. Similarly, define
\[\calJ =\sth{\cap_{k=1}^n \cap_{i=1}^{m_k} D_i^{(k)}: D_i^{(k)} \in \sth{B_i^{(k)},\pth{B_i^{(k)}}^{\rm c}}},\]
and relabel elements of $\calJ$ to be $\{J_1,J_2,\cdots,J_w\}$, so that $\calJ$ is a partition of $\calY$. Let $\calE = \{I_k\times J_l\}_{k,l=1}^{w}$.
Note that for any $A_i^{(k)} \times B_i^{(k)}\in \calM$, $A_i^{(k)}$ must be the union of some disjoint elements in $\calI$, and so is $B_i^{(k)}$. Then $A_i^{(k)}\times B_i^{(k)}$ is a union of disjoint elements in $\calE$. Therefore, analogous to~\prettyref{eq:data_processing}, we can show that $$
\chi^2(\sfP_\calM\|\sfQ_\calM)\leq\chi^2(\sfP_\calE\|\sfQ_\calE).$$ 
It follows that 
    \[
    \chi^2(\sfP_\calG\|\sfQ_\calG)
    \le \chi^2(\sfP_\calM\|\sfQ_\calM)+\epsilon
    \le
\chi^2(\sfP_\calE\|\sfQ_\calE) + \epsilon.\]
    Taking the supremum on both sides yields
    \[
\sup_\calG\chi^2(\sfP_\calG\|\sfQ_\calG)
    \le 
    \sup_\calE \chi^2(\sfP_\calE\|\sfQ_\calE)+ \epsilon.\]
    Since $\epsilon$ is arbitrary, we conclude that $\sup_\calG\chi^2(\sfP_\calG\|\sfQ_\calG) 
    \le \sup_\calE \chi^2(\sfP_\calE\|\sfQ_\calE)$. Also, by definition, it trivially holds that 
$\sup_\calG\chi^2(\sfP_\calG\|\sfQ_\calG) 
    \ge \sup_\calE \chi^2(\sfP_\calE\|\sfQ_\calE)$.
Thus, by~\prettyref{lmm:property of f-divergence},
we finally deduce that 
$\chi^2(\sfP\|\sfQ) = \sup_\calG\chi^2(\sfP_\calG\|\sfQ_\calG) 
    = \sup_\calE \chi^2(\sfP_\calE\|\sfQ_\calE)$.
\end{proof}

\section{Proof of Corollaries~\ref{cor:Gaussian} and~\ref{cor:Bernoulli}}

\begin{proof}[Proof of~\prettyref{cor:Gaussian}]
Let $\sfp = \calN\pth{\zeros,\begin{bmatrix}
  1&\rho \\
 \rho &1
\end{bmatrix}}$ and $\sfq = \calN\pth{\zeros,\begin{bmatrix}
  1&0 \\
 0 &1
\end{bmatrix}}$. Then $\sfP_{X,Y} = \sfp^{\otimes d}$ and $\sfP_X\otimes\sfP_Y = \sfq^{\otimes d}$. The likelihood ratio defined in~\prettyref{eq:likelihood ratio} is $L(x,y) = \prod_{j=1}^d \ell(x_j,y_j)$ for $x,y\in \reals^d$,
where 
$$\ell(a,b) = \frac{\sfp(a,b)}{\sfq(a,b)} = \frac{1}{\sqrt{1-\rho^2}}\rexp{\frac{-(a^2+b^2)\rho^2 + 2ab\rho}{2(1-\rho^2)}}, \quad \forall a,b \in \reals
$$
is the Mehler kernel, which is diagonalized by Hermite polynomials (see~\cite[Equation 4.16]{janson1997gaussian}):
\begin{equation}
\ell(a,b) = \sum_{k=0}^\infty \rho^k \frac{H_k(a)}{\sqrt{k!}}\frac{H_k(b)}{\sqrt{k!}},
    \label{eq:mehler}
\end{equation}
where 
$\{H_k\}_{k\geq0}$ are probabilists' Hermite polynomials defined as $H_n(x)=(-1)^n \rexp{x^2/2}\frac{\diff^n }{\diff x^n}\rexp{-x^2/2}$ and $\{\rho^k\}_{k\geq0}$ are eigenvalues of $\ell$. From here, one can obtain the decomposition of $L$ whose eigenfunctions are multivariate Hermite polynomials of the form $\prod_{j=1}^d \frac{1}{\sqrt{k_j!}} H_{k_j}(x_j)$. In particular, the first $d$ eigenfunctions are linear:
for $k \in [d]$, 
$\lambda_k = \rho$, $\phi_k(x) = x_k$ and $\psi_k(y) = y_k$. Then
\[\Ichi(X;Y)= \chi^2(\sfP_{X,Y}\|\sfP_X\otimes \sfP_Y) = \Expect_{\sfP_X\otimes \sfP_Y} L^2 - 1 = \pth{\Expect_{\sfq}\ell^2(x_1,y_1)}^d-1 =\frac{1}{(1-\rho^2)^d}-1,\]
where the third equality holds because entries of $X$ and $Y$ are independent across $d$ dimensions and the last equality follows from the identity $\Expect_{\sfq} \ell^2(x_1,y_1) = \sum_{k=0}^\infty \rho^{2k} = \frac{1}{1-\rho^2}$.

We now prove the theorem for fixed $d$ and growing $d$ separately.

\begin{itemize}
    \item Case 1: fixed $d$. 

We show that $\rho^2 \to 1$ is sufficient and necessary for strong detection. For sufficiency, suppose $d = 1$ or only using the first coordinate of $X_i$'s and $Y_i$'s for $d >1$, and consider $\Phi_r(X)$ and $\Psi_r(Y)$ defined in~\prettyref{eq:eigenvector test}. For any fixed $\epsilon>0,$ choose 
\begin{equation}
r=\frac{16}{\alpha} \log \frac{1}{\epsilon}.
\label{eq:r-choice}
\end{equation}
 By assumption, $\rho^2 \equiv \rho(n)^2 \to 1$. Thus, there exists $n_\epsilon$ such that for all $n \ge n_\epsilon,$ $1-\rho^2 \le 1/(r+1)$. 
Since we can always add independent noise to both samples, we can assume without loss of generality that $1-\rho^2 =  1/(r+1)$ for all $n \ge n_\epsilon$.

By the CLT, $(\Phi_r, \Psi_r)$ converges in distribution to $\sfN_0 = \calN(\zeros_{2r},\identity_{2r})$ under $\calH_0$ and $\sfN_1 = \calN(\zeros_{2r},\Sigma_{2r})$ under $\calH_1$ as $n\diverge$, where
\[\Sigma_{2r} = \begin{bmatrix}
 \identity_{r} & \sqrt{\alpha} \Lambda_r\\
 \sqrt{\alpha} \Lambda_r &\identity_{r}
\end{bmatrix},\]
with $\Lambda_r = \diag{\rho,\rho^2,\cdots,\rho^r}$. Moreover, following the same argument as in~\prettyref{eq:bound_total_variation_VTEX1}, we get that
\[1 - \dTV(\sfN_0,\sfN_1)
\le 1- H^2(\sfN_0, \sfN_1)
= \prod_{k=1}^r \left(\frac{1-\alpha \rho^{2k} }{(1-\alpha \rho^{2k}/4)^2}\right)^{1/4}
\le \exp \left( - \frac{\alpha}{8}\sum_{k=1}^r \rho^{2k}\right).
\]
Now, plugging in $\rho^2 =1-1/(r+1)$, we get that 
$$
\sum_{k=1}^r \rho^{2k} = \sum_{k=1}^r \left( 1- \frac{1}{r+1}\right)^k \ge 
\sum_{k=1}^r 
\left( 1- \frac{k}{r+1} \right) =\frac{r}{2}.
$$
Therefore, 
\[1 - \dTV(\sfN_0,\sfN_1)  \leq \exp\pth{-\frac{\alpha}{16}r} = \epsilon.\]

Define $S$ to be the likelihood ratio test statistic between $\sfN_0$ and $\sfN_1$, \ie
\[S = -\frac{1}{2}Z^\top \pth{\Sigma_{2r}^{-1} - \identity_{2r}}Z-\frac{1}{2}\sum_{k=1}^r (1-\alpha \rho^{2k}),\]
where $Z \sim \sfN_0$ under $\calH_0$
and $Z \sim \sfN_1$ under $\calH_1. $
It follows that $\Prob_0 \sth{S > 0} + \Prob_1 \sth{S \leq 0} \leq \epsilon$.
Define $T_n$ as 
\[T_n = -\frac{1}{2} \begin{pmatrix}
\Phi_r(X) \\\Psi_r(Y)
\end{pmatrix}^\top \pth{\Sigma_{2r}^{-1} - \identity_{2r}}\begin{pmatrix}
\Phi_r(X) \\\Psi_r(Y)
\end{pmatrix} - \frac{1}{2}\sum_{k=1}^r(1-\alpha \rho^{2k}).\]
Since $T_n$ converges to $S$ in distribution under both $\calH_0$ and $\calH_1,$
 $\lim_{n\diverge} \Prob_0 \sth{T_n> 0} + \Prob_1 \sth{T_n \leq 0} \leq \epsilon$. Finally, since $\epsilon$ is arbitrary, we get $\lim_{n\diverge} \Prob_0 \sth{T_n> 0} + \Prob_1 \sth{T_n \leq 0} =0$.

For necessity, assume that $\rho^2 \not \to 1$, \ie, it is bounded away from $1$. Then we have
\[\Ichi(X;Y)  = \frac{1}{(1-\rho^2)^d} - 1 = O(1)\text{ and } \rho(X;Y) = \rho = 1-\Omega(1),\]
which implies that strong detection is impossible according to~\prettyref{cor:lower bound general}.

\item Case 2: growing $d$. The goal is to show that strong detection is possible if and only if $\rho^2d \to \infty$. For sufficiency, let $r = d$, then we have
$\frac{m}{n}\Ichir(X;Y) = \frac{m}{n}\sum_{k=1}^d \lambda^2_k = \frac{m}{n}\rho^2d \to \infty$ and $L_r(x,y)=\sum_{k=1}^d \rho \phi_k(x)\psi_k(y)=\rho \iprod{x}{y}. $
Therefore, 
\[\Expect_{\sfP_{X,Y}} \qth{L^2_r(X,Y)} 
=\rho^2 \Expect_{\sfP_{X,Y}} \iprod{X}{Y}^2 = \rho^4(2d + d^2) + (1-\rho^2)\rho^2d.\]

Consequently, 
\[\frac{\Expect_{\sfP_{X,Y}} \qth{L^2_r(X,Y)}}{m\pth{\Ichi^{(r)}(X;Y)}^2}=\frac{\rho^2 (2d+d^2) + (1-\rho^2)d}{m\rho^2 d^2} \to 0,\]
implying that strong detection is achievable by~\prettyref{thm:upper bound general eigen r}.

To show the necessity, assume $\rho^2d \not \diverge$, then it is bounded and we must have $\rho \to 0$ and then $\rho(X;Y)=\lambda_1 = \rho = 1-\Omega(1)$. Additionally, for $n$ large enough, $1 - \rho^2 \geq \frac{1}{2}$ and then
\[\Ichi(X;Y)=\sum_{k=1}^\infty \lambda_k^2 = \frac{1}{(1-\rho^2)^d} -1 \leq  \rexp{-d\ln (1-\rho^2)} \leq \rexp{d \frac{\rho^2}{1-\rho^2}} \leq \rexp{2\rho^2 d} = O(1),\]
where the second inequality follows from $-\ln(1-x) \leq \frac{x}{1-x}$. By~\prettyref{cor:lower bound general}, we know that strong detection is impossible.
\end{itemize}
\end{proof}

\begin{proof}[Proof of~\prettyref{cor:Bernoulli}]

In~\prettyref{tab:Bernoulli pmf}, we summarize the probability mass functions of $\sfq$ and $\sfp$.

\begin{table}[!thp]
\centering
\begin{tabular}{|l|l|l|l|l|}
\hline 
& $(0,0)$ & $(1,0)$ & $(0,1)$ & $(1,1)$
\\
\hline 
$\sfq$ & $(1-q)^2$ & $q(1-q)$ & $q(1-q)$ & $q^2$\\
\hline
$\sfp$& $(1-q)(1-q+\rho q)$ & $q(1-q)(1-\rho)$ & $q(1-q)(1-\rho)$ & $q(q+\rho(1-q))$\\
\hline
\end{tabular}
\caption{Joint probability mass functions for the Bernoulli model.}
\label{tab:Bernoulli pmf}
\end{table}

The likelihood ratio defined in~\prettyref{eq:likelihood ratio} is
$L(x,y) = \prod_{j=1}^d \ell(x_j,y_j) $ for $x,y \in \{0,1\}^d,$
where $
\ell(a,b) = \frac{\sfp(a,b)}{\sfq(a,b)}$ for $a,b \in \{0,1\}$. Let $g(a) = \frac{a-q}{\sqrt{q(1-q)}}$. Then $\ell(a,b)$ has the orthogonal decomposition as $\ell(a,b) = 1 + \rho g(a)g(b)$. Thus, we obtain that $\Ichi(X;Y) = (1+\rho^2)^d - 1$. 
Additionally, we derive the corresponding decomposition for $L$ and obtain that for $k \in [d]$, $\lambda_k = \rho$, $\phi_k(x) = g(x_k)$ and $\psi_k(y) = g(y_k)$.

We consider fixed $d$ and growing $d$ separately.

\begin{itemize}
    \item Case 1: fixed $d$. 
    \begin{itemize}
        \item $\alpha \in (0,1)$: In this case, we always have $\sum_{k=1}^\infty \lambda^2_k = (\rho^2+1)^d - 1 = O(1),$
        meaning that strong detection is impossible by~\prettyref{cor:lower bound general}.
        \item $\alpha = 1$:
The goal is to show that strong detection is possible if and only if $\rho^2 \to 1$ and $nq\diverge$. For sufficiency, we only need to verify $\Expect_{\sfP_X} |\phi_1^3(X)| = o(\sqrt{n})$. In fact, 
\begin{equation}
    \label{eq:third moment condition}\frac{\Expect_{\sfP_X}|\phi_1^3(X)|}{\sqrt{n}} = \frac{\Expect_{x \in \Bern(q)} |x-q|^3}{\sqrt{n}q^{3/2}(1-q)^{3/2}} = \frac{(1-q)^{3/2}}{\sqrt{qn}} + \frac{q^{3/2}}{\sqrt{n(1-q)}}\to 0,
\end{equation}
given $nq \diverge$ and $q \le 1/2$ by assumption. Hence, strong detection is possible according to~\prettyref{thm:upper bound general eigen 1}.

For necessity, if $\rho^2 \not \to 1$, then there exists a subsequence along which $\rho$ is bounded away from $1$. Then
\[\Ichi(X;Y) = (\rho^2 + 1)^d - 1 = O(1) 
 \text{ and } \rho(X;Y) = \rho = 1 -\Omega(1),\]
 implying that strong detection is not achievable by applying~\prettyref{cor:lower bound general}.

To show $nq \diverge$ is also necessary, note that if the latent injection $\pi$ is known, in which case the problem becomes testing $\sfP_{X,Y}$ vs $\sfP_X\otimes \sfP_Y$ based on $m$ independent observations. It is well-known that strong detection for this easier problem is equivalent to $H^2(\sfP_{X,Y},\sfP_X\otimes \sfP_Y)\gg 1/m$. We have 
\[mH^2(\sfP_{X,Y},\sfP_X\otimes \sfP_Y) = mH^2(\sfp^{\otimes d}, \sfq^{\otimes d}) = 2m \pth{1 - \pth{1- \frac{H^2(\sfp,\sfq)}{2}}^d},\]
meaning that if $mdH^2(\sfp,\sfq) = O(1)$, then $mH^2(\sfP_{X,Y},\sfP_X\otimes \sfP_Y) = O(1)$, \ie strong detection is impossible (even if the latent injection is known).

If $nq \not \diverge$, we verify that $mdH^2(\sfp,\sfq) = O(1)$. According to~\prettyref{tab:Bernoulli pmf}, we have
\begin{align*}
    {H^2(\sfp,\sfq)} &= 2-2\sum_{i,j} \sqrt{\sfq(i,j)\sfp(i,j)}\\
    & = 2 - 2\sqrt{(1-q)^3 (1-q+\rho q)} - 4\sqrt{q^2(1-q)^2 (1-\rho)} - 2\sqrt{q^3 (q+\rho(1-q))}.
\end{align*}
Given assumption $nq\not \to \infty$, we have $q = o(1)$. Then by~\prettyref{lmm:properties of f in proof of Bernoulli case}, ${H^2}(\sfp,\sfq) = O(q)$, implying that $mdH^2(\sfp,\sfq) = ndH^2(\sfp,\sfq)= O(ndq) = O(1)$.
    \end{itemize}
\item Case 2: growing $d$. We show that strong detection is possible if and only if $\rho^2 d \diverge$ and $nd q\abs{\rho}\diverge$. For sufficiency, choose $r = d$ in~\prettyref{thm:upper bound general eigen r}. Then $\Ichir(X;Y) = \sum_{k=1}^d \lambda_k^2 = \rho^2 d \to \infty$ and $L_r(x,y) = \sum_{k=1}^d \rho \phi_k(x)\psi_k(y)= \sum_{k=1}^d (x_k-q)(y_k-q)/[q(1-q)]$. Note that
\begin{align*}
\Expect_{\sfP_{X,Y}}\qth{\pth{\sum_{k=1}^d (x_k-q)(y_k-q)}^2} & =\Expect_{\sfP_{X,Y}} \qth{\sum_{k=1}^d (x_k-q)^2(y_k-q)^2}\\
    & +\Expect_{\sfP_{X,Y}} \qth{\sum_{k\neq k'} (x_k-q)(y_k-q)(x_{k'}-q)(y_{k'}-q)}.
\end{align*}
The first term in the above display is
\begin{align*}
    & \Expect_{\sfP_{X,Y}} \qth{\sum_{k=1}^d (x_k-q)^2(y_k-q)^2} \\
    & = d\Expect_{(x,y)\sim \sfp}[(x-q)^2(y-q)^2] \\
    & = d(1-q)(1-q+\rho q)q^4 + 2dq^3(1-q)^3(1-\rho) + d(1-q)^4q(q+\rho(1-q)).
\end{align*}
The second term is
\begin{align*}
    \Expect_{\sfP_{X,Y}} \qth{\sum_{k\neq k'} (x_k-q)(y_k-q)(x_{k'}-q)(y_{k'}-q)} & = \sum_{k\neq k'} \Expect_\sfp^2\qth{ (x-q)(y-q)} \\
    & = (d^2-d)\rho^2q^2(1-q)^2.
\end{align*}

Combining the above results, one has
\begin{align*}
&\frac{\Expect_{\sfP_{X,Y}} \qth{L^2_r(X,Y)}}{m \pth{\Ichir(X;Y)}^2}  = \frac{\Expect_{\sfP_{X,Y}}\qth{\pth{\sum_{k=1}^d (x_k-q)(y_k-q)}^2}}{\alpha n\rho^2d^2q^2(1-q)^2}\\
    & = \frac{(1-q)^4q(q+\rho(1-q)) + q^4(1-q)(1-q+\rho q) + 2q^3(1-q)^3(1-\rho)}{\alpha n\rho^2 dq^2(1-q)^2} + \frac{d-1}{\alpha nd}\\
    & = \frac{(1-q)^2 (q+\rho(1-q))}{\alpha n\rho^2d q} + \frac{q^2(1-q+\rho q)}{\alpha n\rho^2d(1-q)} + \frac{2q(1-q)(1-\rho)}{\alpha n\rho^2 d}  + \frac{d-1}{\alpha nd}.
\end{align*}

By condition $\rho^2 d \diverge$, the last three terms are vanishing. Then we require the first term tends to $0$, which is essentially
$\frac{q+\rho - \rho q}{n\rho^2 d q} \to 0$.
Note that
\[\frac{q+\rho - \rho q}{n\rho^2 d q} = \frac{1}{n\rho^2 d} + \frac{1-q}{nd\rho q} \to 0,\]
which means that the conditions of~\prettyref{thm:upper bound general eigen r} hold given $\rho^2d \diverge$ and $nd q\abs{\rho}\diverge$. Therefore, strong detection is achievable.

Now, we prove the necessity which is to show that if  $\rho^2 d \not \to \infty$ or $nd q\abs{\rho} \not\to \infty$, then strong detection is impossible. 
If $\rho^2 d \not \diverge$, then we have $\rho \to 0$ hence $\rho(X;Y) =  \rho = 1-\Omega(1)$. Additionally, $\Ichi(X;Y) =  (\rho^2 + 1)^d - 1 \leq e^{\rho^2 d} = O(1),$
meaning that strong detection is impossible by~\prettyref{cor:lower bound general}.

If $\rho^2d \diverge$ and $nd q \abs{\rho}\not \diverge$, then $nd q\abs{\rho} = O(1)$. Same as the fixed dimension scenario, we want to show that $mdH^2(\sfp,\sfq) =O(1)$ holds hence strong detection is impossible.

There are two subcases.

\begin{itemize}
   \item If $\abs{\rho} \not \to 0$, then $q \to 0$. According to~\prettyref{lmm:properties of f in proof of Bernoulli case}, we have $md{H^2(\sfp,\sfq)} = nd{H^2(\sfp,\sfq)}= O(ndq) = O(nd q\abs{\rho}) = O(1)$.
    \item If $\abs{\rho} \to 0$, given $\rho^2d \diverge$ and $nd q\abs{\rho}\not \diverge$, we have $\abs{\rho}/q \diverge$ and $q\to 0$. Then by~\prettyref{lmm:properties of f in proof of Bernoulli case}, we know that  $md{H^2(\sfp,\sfq)} = nd{H^2(\sfp,\sfq)} = O(nd q\abs{\rho}) = O(1)$. 
\end{itemize}

\end{itemize}
\end{proof}
\end{appendix}

\paragraph*{Acknowledgments}
Part of the work was supported by the National Science Foundation under
Grant No. DMS-1928930, while Y.~Wu was in residence at the Simons Laufer
Mathematical Sciences Institute in Berkeley, California, during the Spring
2025 semester. J.~Xu is supported in part by an NSF CAREER award CCF-2144593.

\bibliographystyle{abbrv}
\bibliography{references}

\end{document}